\theoremstyle{definition}
\newtheorem{thm}{Theorem}[section]
\newtheorem{lem}{Lemma}[section]
\newtheorem{prop}{Proposition}[section]
\newtheorem{cor}{Corollary}[section]
\newtheorem{ex}{Example}[section]
\newtheorem{Remark}{Remark}[section]
\DeclareMathOperator{\Ima}{Im}
\DeclareMathOperator{\rk}{rk}
\DeclareMathOperator{\id}{id}
\let\vec\mathbf
\begin{document}
\newtheorem{defn}{Definition}[section]
\title{Multiparameter Persistence Landscapes}
\author{Oliver Vipond\\  \texttt{vipond@maths.ox.ac.uk}\\
  \textsc{Mathematical Institute, University of Oxford, Oxford OX2 7DT, UK}}
\date{}
\maketitle
\setcounter{tocdepth}{1}

\begin{abstract}
    An important problem in the field of Topological Data Analysis is defining topological summaries which can be combined with traditional data analytic tools.
    In recent work Bubenik introduced the persistence landscape, a stable representation of persistence diagrams amenable to statistical analysis and machine learning tools. In this paper we generalise the persistence landscape to multiparameter persistence modules providing a stable representation of the rank invariant. 
    We show that multiparameter landscapes are stable with respect to the interleaving distance and persistence weighted Wasserstein distance, and that the collection of multiparameter landscapes faithfully represents the rank invariant. Finally we provide example calculations and statistical tests to demonstrate a range of potential applications and how one can interpret the landscapes associated to a multiparameter module.
\end{abstract}

\section{Introduction}
Topological and Geometric Data Analysis (TGDA) describes an emerging set of analytic tools which leverage the underlying shape of a data set to produce topological summaries. These techniques have been particularly successful at providing new insight for high dimensional data sets, topological data structures and biological data sets \cite{gameiro_topological_2015}\cite{kelin_multidimensional_2015}\cite{nicolau_topology_2011}. An ideal topological summary should discriminate well between different spaces, be stable to perturbations of the initial data, and amenable to statistical analysis.

Persistent homology (PH) has become a ubiquitous tool in the TGDA arsenal. PH studies the homology groups of a family of topological spaces built upon a data set. The associated topological summary for a $1$-parameter family is given in terms of a persistence diagram marking the parameter values for births and deaths of homological features. The persistent diagram may equivalently be thought of as a multiset of points in $\mathbb{R}^2$.

Qualitatively one considers long lived homological features detected as inherent to the data set and the short lived features as noise. Nevertheless in various applications it has been revealed that the short lived features provide important discriminating information in classification. For example in analysing the topology of brain arteries it was found the $28^\text{th}$ longest persisting feature provided the most useful discriminating information \cite{bendich_persistent_2016}. We would like therefore to have a statistical framework and topological summary in which one can detect statistically significant topological features of large, medium and small persistence, and in particular does not discard short lived features as noise.

The space of persistence diagrams does not enjoy desired properties for traditional statistical analysis. For example, a collection of persistence diagrams may not have a well defined mean. As a result there have been various attempts to vectorize the persistence diagram, in order that the summary is more amenable to statistical analysis and machine learning techniques.

The article \cite{Bubenik:2015} introduces a stable vectorization of the persistence diagram, the persistence landscape, a function in Lebesgue $p$-space. This summary naturally enjoys unique means and one can perform traditional hypothesis tests upon this summary and numerical statistics derived from this summary.

There are several natural situations where we may wish to build a richer structure of topological spaces on a data set and track the changes to the homology whilst varying multiple parameters. For example in \cite{Keller2018} the topology of chemical compounds is studied using 2-parameter filtrations. The PH theory becomes wildly more complicated when the family of topological spaces on our data set is indexed by multiple parameters. The associated family of homology groups is known as a multiparameter persistence module.  The theory of multiparameter persistence modules is presented in \cite{Carlsson2009} and unlike the single parameter case where we may associate a persistence diagram to a module, there is not an analogous complete discrete invariant in the multiparameter setting.

There exist various approaches to define invariants for multiparameter persistence modules in the literature.
The rank invariant of a module has been studied in the context of $H_0$-modules and shape matching, and has been shown to be stable when endowed with a matching distance \cite{frosini_persistent_2011} \cite{cerri_betti_2013}. An alternative approach uses algebraic geometry to construct numeric functions on multiparameter persistence modules \cite{Skryzalin2017}, generalising the ring of algebraic functions on barcode space for the single parameter case \cite{Adcock2013}. However, the disadvantage of both of these approaches is that equipped with their vector space norms, these invariants are unstable with respect to the natural distance to consider on multiparameter modules, the interleaving distance.
Another approach is to study algebraic invariants associated to the multigraded algebra structure of multiparameter persistence modules \cite{2017Otter}.

In this article we introduce a family of new stable invariants for multiparameter persistence modules, naturally extending the results of \cite{Bubenik:2015} from the setting of single parameter persistence modules to multiparameter persistence modules. Our incomplete invariants, the \textit{multiparameter persistence landscapes}, are derived from the rank invariant associated to a multiparameter persistence module, and are continuous functions in Lesbegue $p$-space. As such, they are naturally endowed with a distance function and are well suited to statistical analysis, since again there is a uniquely defined mean associated to multiple landscapes. The natural inner-product structure on the landscape functions gives rise to a positive-definite kernel, which can be leveraged by machine learning algorithms.

The multiparameter landscape functions are sensitive to homological features of large, medium and small persistence. The landscapes also have the advantage of being interpretable since they are closely related to the rank invariant. Moreover one can derive stable $\mathbb{R}$-valued numeric invariants from the landscape functions using the linear functionals in the dual space. We can produce confidence intervals and perform hypothesis tests on these numeric invariants which are viewed as $\mathbb{R}$-valued random variables.

In the $2$-parameter case we visualise the multiparameter landscapes as a surface $\lambda : \mathbb{R}^2 \to \mathbb{R}$. We shall present computational examples in the $2$-parameter case using the RIVET software presented in \cite{Lesnick2015}. These examples will serve a range of potential applications, demonstrating that the landscapes are sensitive to both the topology and geometry of a data set.

\subsection{Outline of Content}

We begin by introducing multiparameter persistence theory in Section \ref{Multiparameter-Theory}. We shall present the interleaving distance on multiparameter modules and define the persistence weighted Wasserstein distance for interval decomposable modules.

In Section \ref{Landscapes} we will recall the definition and properties of the single parameter persistence landscape. We shall consider possible generalisations of the persistence landscape to multiparameter persistence modules and then explore the properties and discriminating power of our chosen generalisation.

Section \ref{Stability} contains the proof of stability of our multiparameter persistence landscapes with respect to the interleaving distance and persistence weighted Wasserstein distance. We include also an optimality result showing that the collection of landscapes associated to a multiparameter persistence module contains almost all the information contained in the rank invariant.

In Section \ref{Statistics} we shall collect results from the literature regarding Banach Space valued random variables and demonstrate how these results apply to multiparameter persistence landscapes.

Finally, in Section \ref{Computations} we provide a conservative estimate for the time complexity of computing multiparameter persistence landscapes, perform some example computations of the multiparameter landscapes and apply statistical tests to various data sets. 



\section*{Acknowledgements}

The author would like to give recognition to the \textit{Theory and Foundations of TGDA} workshop hosted at Ohio State University which facilitated useful conversations with experts in TGDA. The author wishes to thank his supervisor Ulrike Tillmann for her guidance and support with this project, and Peter Bubenik for helpful suggestions. The author gratefully acknowledges support from EPSRC studentship EP/N509711/1 and EPSRC grant EP/R018472/1.

\section{Multiparameter Persistence Theory}
\label{Multiparameter-Theory}
The theory of persistent homology is well developed for topological spaces filtered over a single parameter. Under appropriate finiteness conditions, the Krull-Schmidt Decomposition Theorem establishes the barcode as a complete invariant \cite{Crawley-Boevey15}. 

In the more general situation we study our space is filtered over multiple parameters. What we call multiparameter persistence is sometimes termed multidimensional persistence in the literature. We shall reserve \textit{parameter} to describe variables over which we are filtering our space, and \textit{dimension} to refer to homological dimension. 

The general decomposition theorem available in the single parameter case does not generalise to multiparameter modules. In the language of Quiver representations, single parameter persistence is the study of $A_n$-type quiver representations and their infinite extensions, whilst in contrast multiparameter persistence concerns the more complicated representations of wild-type quivers. However recent work has shown that affording additional structure to poset modules results in an appropriate generalisation of Gabriel's Theorem under certain conditions \cite{Ogle2018}.  

Let us begin with an exposition of multiparameter persistence theory. We shall carry two equivalent perspectives of multiparameter persistence modules; a categorical perspective which will serve efficient descriptions of interleavings, and a module theoretic perspective which will serve an efficient way to describe modules via presentations.

The following example gives a construction of a multiparameter persistence module.

\begin{ex}(Sublevel-set Multiparameter Persistence Module)

  Let $X$ be a topological space and $f : X \to \mathbb{R}^n$ a filtering function. We can associate a family of topological subspaces indexed by vectors $\vec{a} = (a_1,...,a_n) \in \mathbb{R}^n$ induced by $f$: $$X_\vec{a} = \{ x\in X\ |\ f(x)_i < a_i \ \forall \  i = 1,...,n\}$$
  this is known as the sublevel set filtration. 
  For any $\vec{b}\in \mathbb{R}^n$ such that $a_i \leq b_i$ for all $i = 1,..,n$, we have an inclusion map $X_\vec{a} \hookrightarrow X_\vec{b}$. If we let $H$ denote a singular homology functor with coefficients in a field then applying this functor to the collection $\{X_\vec{a}\}$ and the appropriate inclusion maps gives rise to a family of vector spaces and linear maps known as a Sublevel-set Multiparameter Persistence Module.
\end{ex}

\subsection{Multiparameter Persistence Modules}

Let $P_n$ denote the monoid ring of the monoid $([0,\infty)^n,+)$ over a field $\mathbb{F}$. Equivalently one may think of $P_n$ as a pseudo-polynomial ring $\mathbb{F}[x_1,...,x_n]$ in which exponents are only required to be non-negative and can be non-integral. Let $A_n$ denote the polynomial ring $\mathbb{F}[x_1,...,x_n]$ or analogously the monoid ring of $(\mathbb{N}^n,+)$ over $\mathbb{F}$.

Let $\vec{P}$ denote the category associated to the poset $P$, so that  $\vec{R}^n$ and $\vec{Z}^n$ denote the categories associated to the posets $(\mathbb{R}^n,\leq)$ and $(\mathbb{Z}^n,\leq)$ under the standard coordinate-wise partial orders. Let $\vec{Vect}$ denote the category of vector spaces and linear maps over $\mathbb{F}$, and $\vec{vect}$ denote the subcategory of finite dimensional vector spaces. Moreover let us denote the $\textbf{C}$ valued functors on $\textbf{P}$ by $\textbf{C}^\textbf{P}$.

\begin{defn}(Persistence Module)
Let $M$ be a module over the ring $P_n$. We say $M$ is a persistence module if $M$ is an $\mathbb{R}^n$-graded $P_n$-module. That is to say $M$ has a decomposition as a $\mathbb{F}$-vector space $M = \bigoplus_{\vec{a}\in \mathbb{R}^n} M_\vec{a}$ compatible with the action of $P_n$: 
$$m \in M_\vec{a} \implies \vec{x}^\vec{b} \cdot m \in M_{\vec{a}+\vec{b}}$$
Recall that we require morphisms of graded modules to respect grading and be compatible with the module structure.
\end{defn}

In the setting of a sublevel-set persistence module the vector space at each grade is $H(X_\vec{a})$, and the action of $\vec{x}^\vec{b}$ on $H(X_\vec{a})$ is given by the linear map of homology groups induced by the inclusion $X_\vec{a} \hookrightarrow X_{\vec{a}+\vec{b}}$. 

\begin{defn}(Persistence Module)
Let $M$ be an element of the functor category $\vec{Vect}^{\vec{R}^n}$ then $M$ is a persistence module. A morphism of persistence modules is simply a natural transformation $M \Rightarrow M'$.
\end{defn}

\begin{defn}(Discrete Persistence Module)
Let $M$ be a module over the ring $A_n$. We say $M$ is a discrete persistence module if $M$ is an $\mathbb{Z}^n$-graded $A_n$-module.
\end{defn}

\begin{defn}(Discrete Persistence Module)
Let $M$ be an element of the functor category $\vec{Vect}^{\vec{Z}^n}$ then $M$ is a discrete persistence module.
\end{defn}

The equivalence of the two perspectives is simply saying that we have an equivalence of categories between $\mathbb{R}^n$-graded $P_n$-$\vec{Mod}$ and $\vec{Vect}^{\vec{R}^n}$. 

We have introduced the notion of discrete persistence modules since the computations we present shall deal with discrete persistence modules approximating continuous persistence modules. This approximation can be taken to an arbitrary degree of accuracy (under the interleaving distance) with increasing computational cost.

Discretization is not the only approach one can take in computations involving continuous modules. In contrast \cite{Miller2017} develops a primary decomposition of modules which facilitates a finite description of a wide class of persistence modules which would require infinitely many generators if discretized. Our only obstruction to using this approach rather than discretization is the lack of available software to cope with these presentations.

Associated to a multiparameter module is a family of single parameter modules whose collection of barcodes is known as the fibered barcode space.

\begin{defn}(Fibered Barcode Space)
Let $\vec{L}$ denote the subposet of $\vec{R}^n$ corresponding to a positively sloped line $L\subset \mathbb{R}^n$. Let $\iota_L : (\vec{R},\|\cdot\|_\infty) \to (\vec{R}^n,\|\cdot\|_\infty)$ denote the isometric embedding with $\iota_L( \vec{R})=\vec{L}$ and $\iota_L(0)\in \{x_n = 0\}$. Then for $M\in\vec{Vect}^{\vec{R}^n}$ the composite $M^L = M\circ \iota_L$ is a single parameter persistence module, and thus has an associated barcode $\mathcal{B}(M^L)$. Let $\mathcal{L}$ denote the set of positively sloped lines then the collection $\{\mathcal{B}(M^L) \ :\ L \in \mathcal{L} \}$ is known as the fibered barcode space of $M$.
\end{defn}

We shall see later that we will be able to reduce the computation of certain multiparameter landscapes to queries of the fibered barcode space.

\subsection{Presentations}

Let us now develop the theory required to define presentations of persistence modules. 

\begin{defn}(Translation Endofunctors)\cite{Bubenik2015}
Let $\vec{P}$ be the category associated to a preordered set (proset) and let $\Gamma : \vec{P} \to \vec{P}$ be an endofunctor. We say that $\Gamma$ is a translation. Since $\Gamma$ is a functor $\Gamma$ is monotone $x\leq y \implies \Gamma(x)\leq \Gamma(y)$. We say that $\Gamma$ is increasing if $x \leq \Gamma(x)$ for all $x \in P$. 

Let $\textbf{Trans}_{\textbf{P}}$ denote the set of increasing translations of $\vec{P}$ and observe that $\textbf{Trans}_{\textbf{P}}$ is a monoid with respect to composition. 
\end{defn}

It is straight forward to see that $\textbf{Trans}_{\textbf{P}}$ also has a natural proset structure with preorder $\Gamma \leq K \Leftrightarrow \Gamma(x) \leq K(x) \text{ for all } x$ . This preorder is compatible with the monoid structure and $\Gamma \leq K$ implies there is a unique natural transformation $\eta_{K}^{\Gamma} : \Gamma \Rightarrow K $. If $\textbf{P}$ is a poset then so is $\textbf{Trans}_{\textbf{P}}$.

\begin{defn}(Shift Functor)
Let $F$ be an element of the functor category $\vec{C}^\vec{P}$ and $\Gamma$ a translation endofunctor. Let $F(\Gamma)$ denote $F\circ \Gamma \in \vec{C}^\vec{P}$, we call this functor the $\Gamma$ shift of $F$.
\end{defn}

For a persistence module $M$ we shall write $M(\vec{a})$ to denote the shift by the translation in $\mathbb{R}^n$, $\Gamma_{\vec{a}}(\vec{x})= \vec{x} + \vec{a}$.
We define a graded set to be some subset $\mathcal{X} \subset J \times P$ where $J$ is an arbitrary indexing set and $P$ is a grading set. For an element of a graded set $(j,\vec{a})$, we shall refer to $\vec{a}$ as the grade of $j$, gr$(j) = \vec{a}$.

\begin{defn}(Free Module)
Let $\mathcal{X}$ be an $\mathbb{R}^n$-graded set we define the free module on $\mathcal{X}$ to be:
$$ \text{Free}[\mathcal{X}] = \bigoplus_{j\in J} P_n(-\text{gr}(j))$$
\end{defn}

The notion of a free module on a graded set can equivalently be defined using a univeral property characterisation.

We say a subset $\mathcal{R} \subset M$, of a persistence module is homogeneous if $\mathcal{R} \subset \cup_{\vec{a}\in \mathbb{R}^n} M_\vec{a}$, that is to say each element has a well defined grade.

\begin{defn}(Presentations)
Let $\mathcal{X}$ be a graded set and $\mathcal{R}$ a homogeneous subset of the free module on $\mathcal{X}$ generating the submodule $\langle \mathcal{R} \rangle$. We say that a persistence module $M$ has presentation $\langle \mathcal{X} | \mathcal{R}\rangle$ if:
$$ M \cong \frac{\text{Free}[\mathcal{X}]}{\langle \mathcal{R} \rangle}$$
We say that a presentation is finite if both $\mathcal{X}$ and $\mathcal{R}$ are finite. Let $I$ denote the ideal of $P_n$ generated by the elements $\{\vec{x}^\vec{a} \ | \ \vec{a} > 0 \}$ and let $\Phi_{\langle \mathcal{X} | \mathcal{R}\rangle} : \text{Free}[\mathcal{R}] \to \text{Free}[\mathcal{X}]$ be the map induced by the inclusion $\mathcal{R} \hookrightarrow \text{Free}[\mathcal{X}]$. We say that a presentation of $M$ is minimal if $\mathcal{R} \subset I \cdot \text{Free}[\mathcal{X}]$ and $\ker_{\Phi_{\langle \mathcal{X} | \mathcal{R}\rangle}} \subset I \cdot \text{Free}[\mathcal{R}]$.
\end{defn}

\begin{defn}(Multiparameter Betti Numbers)
Let $M$ be a persistence module, then the associated Betti numbers are maps $\xi_i(M) : \mathbb{R}^n \to \mathbb{N}$ defined by:
$$ \xi_i(M)(\vec{a}) = \dim_{\mathbb{F}} (\text{Tor}^{P_n}_i(M,P_n / I P_n)_\vec{a})$$
Standard homological algebra arguments establish that the Betti numbers are well defined (see \cite{Lesnick2015} for details).
If $\langle \mathcal{X} | \mathcal{R}\rangle$ is a minimal presentation for $M$ then $\xi_0(M)(\vec{a}) = |\text{gr}^{-1}_\mathcal{X}(\vec{a})|$ and $\xi_1(M)(\vec{a}) = |\text{gr}^{-1}_\mathcal{R}(\vec{a})|$
\end{defn}

The multiparameter Betti numbers are related to the initial topological space with $\xi_0(M)$ marking the filtration values for the birth of homological features, and $\xi_1(M)$ marking the filtration values for relations between features.

\subsection{Generalised Interleavings}

We shall now adopt the notion of a generalised interleaving from \cite{Bubenik2015} and define an interleaving distance on multiparameter persistence modules. For a more detailed account of interleavings of multiparameter persistence modules see also \cite{Lesnick2012}. 

\begin{defn}(Interleaving) \cite{Bubenik2015}
Let $F,G \in \vec{C}^\vec{P}$ be modules and $\Gamma, K \in \textbf{Trans}_{\textbf{P}}$. We say that $F,G$ are $(\Gamma, K)$-interleaved if there exist natural transformations $\varphi : F \Rightarrow G\Gamma$, $\psi: G\Rightarrow FK$ satisfying the coherence criteria that $(\psi \Gamma)\varphi  = F \eta^{\id}_{K\Gamma} $, $(\varphi K)\psi  = G \eta^{\id}_{\Gamma K}$ where $\eta^{\id}_{\alpha}$ denotes the unique natural transformation between the translations $\id \leq \alpha$.
\end{defn}

An interleaving may be thought of as an approximate isomorphism. Indeed if we take $\Gamma = K = \id$ then $F,G$ are $(\Gamma, K)$-interleaved if and only if $F,G$ are isomorphic. By warping the poset with translations $\Gamma,K$ we admit flexibility to the rigid notion of isomorphism. In order to introduce an associated distance we must assign a weight to the translations to quantify how close the interleaving is to an isomorphism.

\begin{defn}(Sublinear Projections and $\varepsilon$-Interleavings) \cite{Bubenik2015}

A sublinear projection is a function $ \omega : \textbf{Trans}_{\textbf{P}} \rightarrow [0,\infty]$ such that $\omega_{\id}=0$ and $\omega_{\Gamma K} \leq \omega_{\Gamma} + \omega_{K}$. We say that a translation $\Gamma$ is an $\varepsilon$-translation if $\omega_{\Gamma} \leq \varepsilon$. We say modules $F,G \in \textbf{C}^\textbf{P}$ are $\varepsilon$-interleaved with respect to $\omega$ if they are $(\Gamma,K)$-interleaved for some pair of $\varepsilon$-translations.
\end{defn}

\begin{prop}(Induced Interleaving Distance) \cite{Bubenik2015}

Given a sublinear projection $\omega$ and modules $F,G \in \textbf{C}^\textbf{P}$, we have an induced interleaving distance given by:
 $$ d^{\omega}(F,G) = \inf \{ \varepsilon \geq 0 : F,G \text{ are $\varepsilon$-interleaved with respect to $\omega$} \}$$
\end{prop}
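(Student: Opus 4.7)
My plan is to verify that the stated quantity satisfies the axioms of an extended pseudometric on the class $\textbf{C}^\textbf{P}$. The four properties are non-negativity (immediate from the definition), reflexivity $d^\omega(F,F)=0$, symmetry, and the triangle inequality; the first two together with symmetry are essentially formal, so the heart of the argument is the triangle inequality.

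For reflexivity, I take $\Gamma = K = \id$ and let $\varphi = \psi = \id_F$. Since $\eta^{\id}_{\id\,\id}$ is the identity natural transformation, the coherence conditions hold trivially, and since $\omega_{\id}=0$, the module $F$ is $\varepsilon$-interleaved with itself for every $\varepsilon \geq 0$, giving $d^\omega(F,F)=0$. Symmetry follows from the observation that the interleaving data $(\Gamma,K,\varphi,\psi)$ between $F$ and $G$ is the same data as the interleaving data $(K,\Gamma,\psi,\varphi)$ between $G$ and $F$, so the set of $\varepsilon$ over which one takes an infimum coincides.

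The triangle inequality is the main step. Suppose $F,G$ are $(\Gamma_1,K_1)$-interleaved via $(\varphi_1,\psi_1)$ and $G,H$ are $(\Gamma_2,K_2)$-interleaved via $(\varphi_2,\psi_2)$. I propose to build an interleaving between $F$ and $H$ out of the translations $\Gamma_2\Gamma_1$ and $K_1 K_2$, using the natural transformations
$$\Phi := (\varphi_2 \Gamma_1)\circ \varphi_1 : F \Rightarrow H\Gamma_2\Gamma_1, \qquad \Psi := (\psi_1 K_2)\circ \psi_2 : H \Rightarrow F K_1 K_2.$$
The expected obstacle is the verification of the coherence conditions, say $(\Psi\,\Gamma_2\Gamma_1)\circ \Phi = F\,\eta^{\id}_{K_1 K_2 \Gamma_2 \Gamma_1}$. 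I would expand the left-hand side, insert the whiskered coherence identity $(\psi_2\Gamma_2)\circ \varphi_2 = G\,\eta^{\id}_{K_2\Gamma_2}$ in the middle, then use naturality of $\varphi_1,\psi_1$ to pass $\eta^{\id}_{K_2\Gamma_2}$ through them, and finally apply the first coherence identity $(\psi_1\Gamma_1)\circ \varphi_1 = F\,\eta^{\id}_{K_1\Gamma_1}$. Uniqueness of the natural transformation $\id \Rightarrow K_1 K_2 \Gamma_2 \Gamma_1$ in the poset category then forces the composite to equal $F\,\eta^{\id}_{K_1 K_2 \Gamma_2 \Gamma_1}$, and the symmetric identity is checked the same way.

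Given this composed interleaving, sublinearity of $\omega$ yields $\omega_{\Gamma_2\Gamma_1}\leq \omega_{\Gamma_1}+\omega_{\Gamma_2}$ and $\omega_{K_1 K_2}\leq \omega_{K_1}+\omega_{K_2}$, so if $F,G$ are $\varepsilon_1$-interleaved and $G,H$ are $\varepsilon_2$-interleaved then $F,H$ are $(\varepsilon_1+\varepsilon_2)$-interleaved. To conclude the triangle inequality $d^\omega(F,H) \leq d^\omega(F,G) + d^\omega(G,H)$, I pass to the infimum by the standard $\delta$-argument: for any $\delta>0$ choose $\varepsilon_1 < d^\omega(F,G)+\delta/2$ and $\varepsilon_2 < d^\omega(G,H)+\delta/2$ realising such interleavings, so that $d^\omega(F,H) \leq \varepsilon_1 + \varepsilon_2 < d^\omega(F,G) + d^\omega(G,H) + \delta$, and let $\delta \to 0$. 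This gives the inequality allowing infinite values, which is why $d^\omega$ is in general an extended pseudometric rather than a genuine metric.
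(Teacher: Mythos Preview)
The paper does not supply a proof of this proposition: it is stated as a result taken from \cite{Bubenik2015}, so there is no in-paper argument to compare against. Your proof is correct and is essentially the standard one from that reference --- verify the extended-pseudometric axioms, with the only substantive step being the composition of a $(\Gamma_1,K_1)$-interleaving with a $(\Gamma_2,K_2)$-interleaving and the use of sublinearity of $\omega$ to conclude the triangle inequality. One small remark: your appeal to ``uniqueness of the natural transformation $\id \Rightarrow K_1K_2\Gamma_2\Gamma_1$ in the poset category'' is a slight compression, since uniqueness in $\textbf{Trans}_\textbf{P}$ does not by itself force uniqueness after applying $F$; what actually happens is that your chain of naturality and coherence identities reduces the composite to $F$ applied to a composite of $\eta$'s, and \emph{then} uniqueness in the poset collapses that composite to $\eta^{\id}_{K_1K_2\Gamma_2\Gamma_1}$. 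With that clarification the argument is complete.
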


A common sublinear projection to consider is given by $\omega_{\Gamma} = \| \Gamma - \id \|_\infty$. We may sometimes refer to the interleaving distance induced by this sublinear projection as simply \textit{the interleaving distance}. We will specify the sublinear projection when we wish to consider an alternative interleaving distance.

The dual notion to a sublinear projection is a \textit{superlinear family} from which one can also derive an interleaving distance.

\begin{defn}(Superlinear Family) \cite{Bubenik2015}
Let $\Omega : [ 0,\infty ) \to \vec{Trans}_{\vec{P}}$ be a superlinear function: $\Omega_{\varepsilon_1+ \varepsilon_2} \geq \Omega_{\varepsilon_1}\Omega_{\varepsilon_2}$. Then we say that $\Omega$ is a superlinear family.
\end{defn}

\begin{prop}(Induced Interleaving Distance) \cite{Bubenik2015}

Given a superlinear family $\Omega$ and modules $F,G \in \textbf{C}^\textbf{P}$, we have an induced interleaving distance given by:
 $$ d^{\Omega}(F,G) = \inf \{ \varepsilon \geq 0 : F,G \text{ are $\Omega_\varepsilon$-interleaved} \}$$
\end{prop}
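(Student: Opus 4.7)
The plan is to verify that $d^\Omega$ satisfies the axioms of an extended pseudometric on $\textbf{C}^\textbf{P}$. Non-negativity is immediate from the definition as an infimum over $\varepsilon \geq 0$. Symmetry follows from the fact that $\Omega_\varepsilon$-interleaving is built symmetrically in its two arguments: if $(\varphi,\psi)$ witnesses an $\Omega_\varepsilon$-interleaving of $F$ with $G$, then $(\psi,\varphi)$ witnesses one of $G$ with $F$, whence $d^\Omega(F,G) = d^\Omega(G,F)$. For reflexivity, we use that $\Omega_0 = \id$ under the standard convention on superlinear families (the superlinearity inequality at $\varepsilon_1 = \varepsilon_2 = 0$ combined with $\Omega_0$ being an increasing translation pins this down), so the identity natural transformations trivially satisfy the coherence conditions and yield $d^\Omega(F,F) = 0$.

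The substantive content is the triangle inequality $d^\Omega(F,H) \leq d^\Omega(F,G) + d^\Omega(G,H)$. Given $\varepsilon_1 > d^\Omega(F,G)$ and $\varepsilon_2 > d^\Omega(G,H)$, choose an $\Omega_{\varepsilon_1}$-interleaving $(\varphi_1,\psi_1)$ of $F$ with $G$ and an $\Omega_{\varepsilon_2}$-interleaving $(\varphi_2,\psi_2)$ of $G$ with $H$. Compose them by whiskering:
$$\tilde\varphi = (\varphi_2\,\Omega_{\varepsilon_1})\circ \varphi_1 : F \Rightarrow H\,\Omega_{\varepsilon_2}\Omega_{\varepsilon_1}, \qquad \tilde\psi = (\psi_1\,\Omega_{\varepsilon_2})\circ \psi_2 : H \Rightarrow F\,\Omega_{\varepsilon_1}\Omega_{\varepsilon_2}.$$
The superlinearity hypothesis $\Omega_{\varepsilon_1+\varepsilon_2} \geq \Omega_{\varepsilon_1}\Omega_{\varepsilon_2}$ supplies canonical natural transformations $\Omega_{\varepsilon_2}\Omega_{\varepsilon_1} \Rightarrow \Omega_{\varepsilon_1+\varepsilon_2}$ and $\Omega_{\varepsilon_1}\Omega_{\varepsilon_2} \Rightarrow \Omega_{\varepsilon_1+\varepsilon_2}$. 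Whiskering these with $H$ and $F$ and post-composing with $\tilde\varphi,\tilde\psi$ yields maps $\varphi : F \Rightarrow H\Omega_{\varepsilon_1+\varepsilon_2}$ and $\psi : H \Rightarrow F\Omega_{\varepsilon_1+\varepsilon_2}$, which are our candidate interleaving.

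The remaining task is to verify the coherence conditions required for $(\varphi,\psi)$ to be an $\Omega_{\varepsilon_1+\varepsilon_2}$-interleaving. Pasting the two given coherence squares for $(\varphi_1,\psi_1)$ and $(\varphi_2,\psi_2)$ expresses $(\tilde\psi\,\Omega_{\varepsilon_2}\Omega_{\varepsilon_1})\circ\tilde\varphi$ as a composite of canonical transformations of the form $F\eta^{\id}_{\alpha}$ for an iterated translation $\alpha \geq \id$. Since $\textbf{Trans}_{\textbf{P}}$ is a proset, any two parallel natural transformations between comparable increasing translations agree, so this composite matches $F\eta^{\id}_{\Omega_{\varepsilon_1+\varepsilon_2}^2}$ after post-composition with the superlinearity comparisons; the symmetric identity on $H$ is analogous. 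Letting $\varepsilon_1 \searrow d^\Omega(F,G)$ and $\varepsilon_2 \searrow d^\Omega(G,H)$ then gives the inequality. The main obstacle is the coherence bookkeeping in this last step, where one must track multiple iterated right shifts, but uniqueness of morphisms in the proset $\textbf{Trans}_{\textbf{P}}$ makes every such identification automatic, and superlinearity is exactly what is needed to land the resulting translation inside a single $\Omega_{\varepsilon_1+\varepsilon_2}^2$.
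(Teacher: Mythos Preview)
The paper does not give a proof of this proposition at all; it is simply quoted from \cite{Bubenik2015} as a known result. Your argument is the standard one from that source: symmetry and non-negativity are trivial, reflexivity comes from the trivial self-interleaving, and the triangle inequality is obtained by composing an $\Omega_{\varepsilon_1}$-interleaving with an $\Omega_{\varepsilon_2}$-interleaving and then pushing the composite $\Omega_{\varepsilon_1}\Omega_{\varepsilon_2}$ into $\Omega_{\varepsilon_1+\varepsilon_2}$ via superlinearity, with all coherence identities forced by the uniqueness of morphisms in $\textbf{Trans}_{\textbf{P}}$.

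One small correction: your claimed deduction that $\Omega_0 = \id$ is not quite right. Superlinearity at $\varepsilon_1=\varepsilon_2=0$ gives $\Omega_0 \geq \Omega_0^2$, while $\Omega_0 \geq \id$ gives $\Omega_0^2 \geq \Omega_0$, so you only obtain $\Omega_0^2 = \Omega_0$, i.e.\ idempotence, not that $\Omega_0$ is the identity (think of a ceiling-type translation). This does not affect the conclusion: since $\Omega_0$ is an increasing translation, the internal maps $F\eta^{\id}_{\Omega_0}$ provide an $(\Omega_0,\Omega_0)$-interleaving of $F$ with itself, and the coherence condition $(F\eta^{\id}_{\Omega_0}\,\Omega_0)\circ F\eta^{\id}_{\Omega_0} = F\eta^{\id}_{\Omega_0^2}$ holds automatically by uniqueness of morphisms between comparable translations. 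So $d^\Omega(F,F)=0$ still follows, just not for the reason you gave.
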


The interleaving distance has been shown to be NP-hard to compute \cite{Botnan17}. Nevertheless the interleaving distance is a very natural distance to consider on persistence modules. \cite{Lesnick2012} establishes that the interleaving distance is universal amongst stable distances on persistence modules, that is to say any other stable distance is bounded above by the interleaving distance. This property provides a strong justification for considering the interleaving distance on multiparameter modules.

\subsection{Discretization and Continuous Extension}

In Section \ref{Computations} our computations will be simplified by restricting a continuous module to a finite grid and then dealing with the continuous extension of this discretization. We will show that restricting to a finite grid gives us a suitable approximation to our module with respect to the interleaving distance between modules.

\begin{defn}(Grid Function)
Let $\mathcal{G}: \mathbb{Z}^n \to \mathbb{R}^n$ be defined by component-wise increasing functions $\mathcal{G}_i : \mathbb{Z} \to \mathbb{R}$ with $\sup \mathcal{G}_i = \sup -\mathcal{G}_i = \infty$. Then we say $\mathcal{G}$ is a grid function. Let us define the size of $\mathcal{G}$ to be $$|\mathcal{G}| = \max_{i\in[n]}\sup_{z \in \mathbb{Z}}|\mathcal{G}_i(z) - \mathcal{G}_i(z+1)|$$
\end{defn}

\begin{defn}(Discretization)
Let $M\in \vec{Vect}^{\vec{R}^n}$ be a persistence module and $\mathcal{G}$ a grid function. We say that $M\circ \mathcal{G} \in \vec{Vect}^{\vec{Z}^n}$ is the $\mathcal{G}$-discretization of $M$.
\end{defn}

\begin{defn}(Continuous Extension)\cite{Lesnick2015}
Let $Q\in \vec{Vect}^{\vec{Z}^n}$ be a discrete persistence module and $\mathcal{G}$ a grid function. For $\vec{x}\in \mathbb{R}^n$ let us define floor and ceiling functions: 
$$\lfloor \vec{x} \rfloor_\mathcal{G} = \max\{\vec{z}\in \Ima \mathcal{G} \ |\ \vec{z} \leq \vec{x}\} \text{ and } \lceil \vec{x} \rceil_\mathcal{G} = \min\{\vec{z}\in \Ima \mathcal{G} \ |\ \vec{z} \geq \vec{x}\}$$ 

We define the continuous extension $E_{\mathcal{G}}(Q)\in\vec{Vect}^{\vec{R}^n}$ to be the persistence module with:
$$E_{\mathcal{G}}(Q)_{\vec{a}} = Q_{\mathcal{G}^{-1}(\lfloor \vec{a} \rfloor_\mathcal{G})} \text{ and } E_{\mathcal{G}}(Q)(\vec{a} \leq \vec{b}) = Q(\mathcal{G}^{-1}(\lfloor \vec{a} \rfloor_\mathcal{G}) \leq \mathcal{G}^{-1}(\lfloor \vec{b} \rfloor_\mathcal{G}))$$

With the obvious action on the morphisms of $\vec{Vect}^{\vec{Z}^n}$, we have defined a functor $ E_{\mathcal{G}}: \vec{Vect}^{\vec{Z}^n} \to \vec{Vect}^{\vec{R}^n}$.
\end{defn}

The following proposition shows that discretization is stable with respect to the interleaving distance, and so we may produce an arbitrarily close approximation to a persistence module by restricting the module to a grid of sufficiently small size.

\begin{prop}
Let $M \in \vec{Vect}^{\vec{R}^n}$ be a persistence module, $\mathcal{G}$ a grid function and $\omega$ the sublinear projection given by $\omega_{\Gamma} = \| \Gamma - \id \|_\infty$ then we have that:
$$ d^{\omega}(M, E_{\mathcal{G}}(M \circ \mathcal{G})) \leq | \mathcal{G} | $$
\end{prop}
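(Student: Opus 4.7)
The plan is to exhibit an explicit $(\Gamma, K)$-interleaving between $M$ and $N := E_{\mathcal{G}}(M\circ \mathcal{G})$ where both $\Gamma$ and $K$ are the shift translation by $\vec{\epsilon} := (\epsilon, \dots, \epsilon)$ with $\epsilon = |\mathcal{G}|$; since $\omega_{\Gamma_{\vec{\epsilon}}} = \epsilon$ under the sublinear projection $\omega_{\Gamma} = \|\Gamma - \id\|_\infty$, this immediately yields $d^{\omega}(M, N) \leq |\mathcal{G}|$.

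The key geometric observation I would establish first is that for every $\vec{a} \in \mathbb{R}^n$ we have the chain of inequalities
\[
\lfloor \vec{a}\rfloor_{\mathcal{G}} \;\leq\; \vec{a} \;\leq\; \lceil \vec{a}\rceil_{\mathcal{G}} \;\leq\; \vec{a} + \vec{\epsilon},
\]
the last of which follows componentwise from the definition of $|\mathcal{G}|$ as the supremum of consecutive gaps $|\mathcal{G}_i(z+1) - \mathcal{G}_i(z)|$. In particular, $\lceil \vec{a}\rceil_{\mathcal{G}}$ is a grid point lying below $\vec{a}+\vec{\epsilon}$, hence
\[
\vec{a} \;\leq\; \lceil \vec{a}\rceil_{\mathcal{G}} \;\leq\; \lfloor \vec{a}+\vec{\epsilon}\rfloor_{\mathcal{G}}.
\]

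Using these comparisons I would define the interleaving maps as follows. At grade $\vec{a}$, set
\[
\varphi_{\vec{a}} \;=\; M\bigl(\vec{a} \leq \lfloor \vec{a}+\vec{\epsilon}\rfloor_{\mathcal{G}}\bigr) \;:\; M_{\vec{a}} \longrightarrow M_{\lfloor \vec{a}+\vec{\epsilon}\rfloor_{\mathcal{G}}} = N_{\vec{a}+\vec{\epsilon}},
\]
\[
\psi_{\vec{a}} \;=\; M\bigl(\lfloor \vec{a}\rfloor_{\mathcal{G}} \leq \vec{a}+\vec{\epsilon}\bigr) \;:\; N_{\vec{a}} = M_{\lfloor \vec{a}\rfloor_{\mathcal{G}}} \longrightarrow M_{\vec{a}+\vec{\epsilon}}.
\]
Naturality of $\varphi$ and $\psi$ is then an immediate consequence of the functoriality of $M$: for $\vec{a} \leq \vec{b}$ each square factors through $M$ applied to a single comparison chain in $\vec{R}^n$, so both compositions coincide.

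To verify the coherence conditions, at grade $\vec{a}$ the composite $(\psi \Gamma_{\vec{\epsilon}})\varphi$ is the composition $M(\vec{a} \leq \lfloor \vec{a}+\vec{\epsilon}\rfloor_{\mathcal{G}} \leq \vec{a}+2\vec{\epsilon})$, which collapses by functoriality to $M(\vec{a} \leq \vec{a}+2\vec{\epsilon})$, precisely the action of $M\eta^{\id}_{\Gamma_{\vec{\epsilon}}\Gamma_{\vec{\epsilon}}}$. Symmetrically, $(\varphi \Gamma_{\vec{\epsilon}})\psi$ at grade $\vec{a}$ is $M(\lfloor \vec{a}\rfloor_{\mathcal{G}} \leq \vec{a}+\vec{\epsilon} \leq \lfloor \vec{a}+2\vec{\epsilon}\rfloor_{\mathcal{G}})$, which equals $M(\lfloor \vec{a}\rfloor_{\mathcal{G}} \leq \lfloor \vec{a}+2\vec{\epsilon}\rfloor_{\mathcal{G}}) = N(\vec{a} \leq \vec{a}+2\vec{\epsilon})$, as required. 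The main (and essentially only) obstacle is confirming the geometric inequality $\lceil \vec{a}\rceil_{\mathcal{G}} \leq \vec{a}+\vec{\epsilon}$, since once this is in hand both the maps and the coherence identities are forced by functoriality and pose no further difficulty.
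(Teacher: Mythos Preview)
Your proof is correct and relies on the same key inequality $\lceil \vec{a}\rceil_{\mathcal{G}} \leq \vec{a} + |\mathcal{G}|\vec{1}$ as the paper, with the interleaving morphisms taken from the internal morphisms of $M$. The only difference is the choice of translation pair: you use the symmetric shift $(\Gamma_{\vec{\epsilon}},\Gamma_{\vec{\epsilon}})$, whereas the paper observes that the asymmetric pair $(\lceil\cdot\rceil_{\mathcal{G}},\id)$ already does the job, since $\omega_{\lceil\cdot\rceil_{\mathcal{G}}}\leq |\mathcal{G}|$ and $\omega_{\id}=0$. The paper's choice is marginally slicker (one side of the interleaving is the identity, so half the coherence check is trivial), but your version is equally valid and arguably more explicit about where the grid-size bound is actually used.
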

\begin{proof}
The modules $M, E_{\mathcal{G}}(M \circ \mathcal{G})$ are $(\lceil \cdot \rceil_{\mathcal{G}} ,\id)$-interleaved with natural transformations given by the appropriate internal morphisms of $M$.
\end{proof}

\subsection{Interval Decomposable Modules}

Given the complicated nature of unconstrained persistence modules, it is common to consider subclasses of multiparameter persistence modules.

\begin{defn}(Interval Decomposable Modules)

Let $P$ be a poset. We define a subposet $I\leq P$ to be an interval if $s,t\in I, s\leq r\leq t \implies r\in I$ and for any $s,t\in I \  \exists \  r_i \in I$ connecting $s$ and $t$, $s =r_0 \leq r_1 \geq r_2 \leq r_3 \geq ... \leq r_n = t$. The interval module $\mathds{1}^{\vec{I}} \in \vec{vect}^{\vec{P}}$ associated to an interval $I$ has a one dimensional vector space at each $a\in I$ and internal isomorphisms given by the identity wherever possible. We say a module $M \in \vec{vect}^{\vec{P}}$ is interval decomposable if $M \cong \bigoplus_{j \in \mathcal{J}} \mathds{1}^{\vec{I}_j}$, for some multiset of intervals $\{\{I_j\}\}$.

\end{defn}

It is straight forward to verify that the endomorphism ring of an interval summand is given by the field over which we are working. In particular this ring is local and so the Krull-Schmidt-Remak-Azumaya Theorem guarantees that the decomposition of an interval decomposable module is unique up to reordering. We can thus assign the multiset of intervals in the decomposition of a module $M$ to be the barcode $\mathcal{B}(M) = \{\{I_j\}\}$.



Restricting our attention to interval decomposable modules we can see the complicated nature of interleavings of multiparameter modules. The class of interval decomposable modules admit a bottleneck distance. 

\begin{defn}($\varepsilon$-Matching)

Let $\{I_j\ \ | \ j \in \mathcal{J}\}$ and $\{J_k\ \ | \ k \in \mathcal{K}\}$ be multisets of intervals. We say a partial bijection $\sigma : \mathcal{J} \nrightarrow \mathcal{K}$ is an $\varepsilon$-matching if $d^{\omega}(\mathds{1}^{I_j},\mathds{1}^{J_{\sigma(j)}}) \leq \varepsilon$ for matched intervals and $d^{\omega}(\mathds{1}^{I_j},0),d^{\omega}(0,\mathds{1}^{J_{k}}) \leq \varepsilon$ for unmatched intervals.

\end{defn}

\begin{defn}(Bottleneck Distance)

Let $M$ and $N$ be interval decomposable modules. The bottleneck distance between the modules is given by:

$$ d_B(M,N) = \inf \{\varepsilon \geq 0 \ |\ \mathcal{B}(M),\mathcal{B}(N) \text{ admit an } \varepsilon \text{-matching} \}$$

\end{defn}

One would hope to attain a result analogous to the isometry theorem for ordinary one dimensional persistent homology relating the bottleneck distance and the interleaving distance. In the single parameter case an $\varepsilon$-interleaving induces an $\varepsilon$-matching between summands \cite{bauer_induced_2014}. In contrast, the interleaving distance and bottleneck distance do not coincide for multiparameter interval decomposable modules. Certainly the bottleneck distance provides an upper bound on the interleaving distance. However general interleavings of interval decomposable multiparameter modules do not necessarily induce a matching of interval summands. This is best illustrated by the example provided in \cite{Bjerkevik2016} for which the optimal matching between 1-interleaved modules is a 3-matching, see Figure \ref{BottleneckVsInterleaving}. 

We can further define a Wasserstein distance for interval decomposable modules.

\begin{defn}($p$-Wasserstein Distance)
Let $M,N$ be interval decomposable persistence modules with barcodes  $\{I_j\ \ | \ j \in \mathcal{J}\}$ and $\{J_\kappa \ \ | \ \kappa \in \mathcal{K}\}$ respectively. Assume the cardinality of these barcodes coincide, by appending a collection of empty intervals to each barcode. For a matching $\sigma : \mathcal{J} \to \mathcal{K}$ let $\varepsilon_j = d^{\omega}(\mathds{1}^{I_j},\mathds{1}^{J_{\sigma(j)}})$. The $p$-Wasserstein distance is given by:

$$d_{W_p}(M,N) = \inf_{\sigma : \mathcal{J} \to \mathcal{K} } \left[\sum_\mathcal{J} \varepsilon_j^p \right]^{\frac{1}{p}}$$

\end{defn}

The bottleneck distance is simply the $\infty$-Wasserstein distance. If we wish to place extra emphasis on intervals with large persistence we may use the persistence weighted $p$-Wasserstein distance.

\begin{defn}(Persistence Weighted $p$-Wasserstein Distance)
Let $M,N$ be interval decomposable persistence modules with barcodes  $\{I_j\ \ | \ j \in \mathcal{J}\}$ and $\{J_\kappa \ \ | \ \kappa \in \mathcal{K}\}$. For a subset $A\subset \mathbb{R}^n$ let $|A|$ denote the Euclidean volume. The persistence weighted $p$-Wasserstein distance is given by:

$$d_{\overline{W}_p}(M,N) = \inf_{\sigma : \mathcal{J} \to \mathcal{K} } \left[\sum_\mathcal{J} |I_j \cup J_{\sigma(j)}|\varepsilon_j^p \right]^{\frac{1}{p}}$$

\end{defn}

The landscape distance we introduce in the following section is similar to the persistence weighted $p$-Wasserstein Distance, and can be defined for persistence modules which do not admit an interval decomposition. 

In Section \ref{Stability} we will show that our invariant is stable with respect to interleaving distance and the persistence weighted Wasserstein distance. In particular the distance function on multiparameter landscapes provides a lower bound on interleaving distance. Given the NP-hardness of computing interleaving distance this lower bound may prove useful for comparing multiparameter persistence modules \cite{Botnan17}.

\begin{figure}
\centering
\begin{subfigure}[b]{0.3\linewidth}
\begin{tikzpicture}[scale = 0.7]

\filldraw[fill=green, fill opacity = 0.25] (0,0.5) rectangle (5,5.5);
\filldraw[fill=green, fill opacity = 0.25] (0,-.5) rectangle (6,5.5);
\filldraw[fill=green, fill opacity = 0.25] (1,.5) rectangle (5,4.5);
\node [anchor=south] (I) at (3,-1.5) {$M$};
\end{tikzpicture}
\end{subfigure}
\begin{subfigure}[b]{0.3\linewidth}
\begin{tikzpicture}[scale = 0.7]
\filldraw[fill=red, fill opacity = 0.25] (0.5,0) rectangle (5.5,5);
\filldraw[fill=red, fill opacity = 0.25] (0.5,0) rectangle (4.5,6);
\filldraw[fill=red, fill opacity = 0.25] (-.5,1) rectangle (5.5,5);
\node [anchor=south] (I) at (3,-1.5) {$N$};
\end{tikzpicture}
\end{subfigure}
\begin{subfigure}[b]{0.3\linewidth}
\begin{tikzpicture}[scale = 0.70]
\filldraw[fill=green, fill opacity = 0.25] (0,0.5) rectangle (5,5.5);
\filldraw[fill=green, fill opacity = 0.25] (0,-.5) rectangle (6,5.5);
\filldraw[fill=green, fill opacity = 0.25] (1,.5) rectangle (5,4.5);
\filldraw[fill=red, fill opacity = 0.25] (0.5,0) rectangle (5.5,5);
\filldraw[fill=red, fill opacity = 0.25] (0.5,0) rectangle (4.5,6);
\filldraw[fill=red, fill opacity = 0.25] (-.5,1) rectangle (5.5,5);
\node [anchor=south] (I) at (3,-1.5) {$M$ and $N$ overlayed};
\end{tikzpicture}
\end{subfigure}
\caption{We illustrate rectangular decomposable modules $M =(0,10)\times (1,11) \oplus (0,12)\times(-1,11)\oplus (2,10)\times (1,9)$ and $N =(1,11)\times (0,10) \oplus (1,9)\times(0,12)\oplus (-1,11)\times (2,10)$. There is a $1$-interleaving between $M,N$ but there is no $1$-matching. The optimal matching is a $3$-matching. See \cite{Bjerkevik2016} for details.}
\label{BottleneckVsInterleaving}
\end{figure}
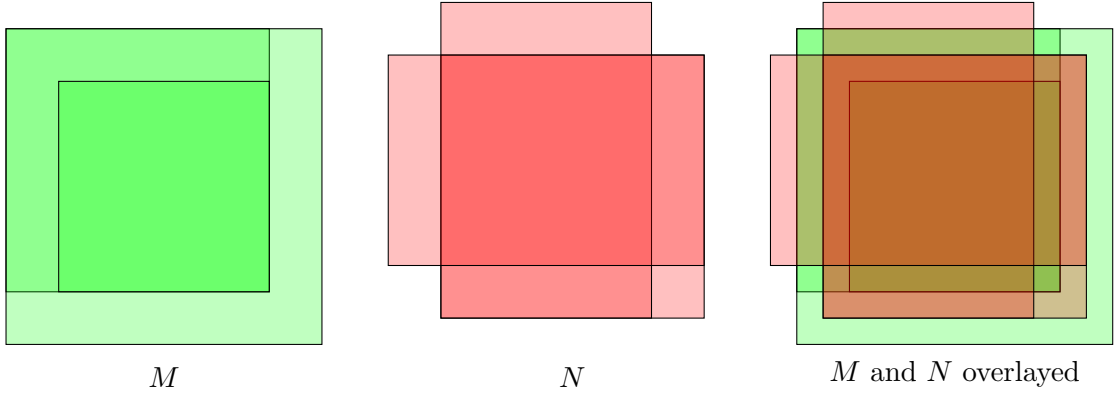

\section{Persistence Landscapes}
\label{Landscapes}

In this section we shall recall the definition of the single parameter persistence landscape and its properties. We shall generalise the definition to multiparameter persistence modules and show which properties of the single parameter persistence landscape are preserved.

From this point onward all single parameter persistence modules we consider shall be pointwise finite dimensional and satisfy the finiteness conditions of \cite{Crawley-Boevey15} in order that they admit an interval decomposition. The multiparameter persistence modules we consider will be pointwise finite dimensional and tame, but will not necessarily admit an interval decomposition.

\subsection{Single Parameter Persistence Landscapes}
The \textit{persistence landscape} associated to a single parameter persistence module is defined in \cite{Bubenik:2015}. The persistence landscape is derived from the rank invariant of a module.

\begin{defn}(Rank Invariant)
Let $M \in \textbf{vect}^{\textbf{R}}$ be a persistence module then  for $a\leq b$ the function $\beta^{\cdot,\cdot}$ giving the corresponding Betti number is the rank invariant of $M$:
$$ \beta^{a,b} = \dim(\Ima(M(a\leq b)))$$
\end{defn}

\begin{defn}(Rank Function)
The rank function $\rk : \mathbb{R}^2 \rightarrow \mathbb{R}$ is given by
$$\rk(b,d)= \begin{cases} \beta^{b,d} & \text{if } b \leq d \\ 0 & \text{otherwise} \end{cases} $$
\end{defn}

\begin{defn}(Rescaled Rank Function)
The rescaled rank function $r : \mathbb{R}^2 \rightarrow \mathbb{R}$ is supported on the upper half plane:
$$r(m,h)= \begin{cases} \beta^{m-h,m+h} & \text{if } h \geq 0\\
0 & \text{otherwise} \end{cases} $$
\end{defn}

Observe that the rank function has support contained in the upper triangular half of the plane with the coordinates corresponding to ``births" and ``deaths", whilst the rescaled rank function has support contained in the upper half plane with coordinates corresponding to ``midpoints" and ``half-lifes".

\begin{defn}(Persistence Landscape)
The persistence landscape is a function $\lambda : \mathbb{N} \times \mathbb{R} \rightarrow \overline{\mathbb{R}}$,
where $\overline{\mathbb{R}}$ denotes the extended real numbers, $[-\infty,\infty]$. Alternatively, it may be thought of as a sequence of
functions  $\lambda_{k} : \mathbb{R} \rightarrow \overline{\mathbb{R}}$, where  $\lambda_{k}(t) = \lambda(k,t)$. Define
$$ \lambda(k,t) = \sup\{ h \geq 0 : \beta^{t-h,t+h} \geq k\}$$
\end{defn}

The value $\lambda(k,t)$ gives the maximal radius of an interval centred at $t$ that is contained in at least $k$ intervals of the barcode. The persistence landscape and persistence diagram of a suitably well-behaved single parameter module carry the same information.

Alternatively the persistence landscape of a single parameter module can be derived from the landscape functions of the modules interval summands, see Figure \ref{fig:Persistence-Landscape-Example}.

\begin{defn}(Persistence Landscape)
    Let $M$ be a single parameter persistence module with associated persistence diagram given by the multiset $\{\{(b_j,d_j)\}\}$. The persistence landscape may be equivalently defined as:
    $$  \lambda_M(k,t) = \text{kmax}\{ \lambda_{(b_j,d_j)}(1,t) \} $$
    Where \text{kmax} denotes the $k^{\text{th}}$ largest value of the multiset and $\lambda_{(b_j,d_j)}$ is the landscape associated to the interval module  $\mathds{1}^{(b_j,d_j)}$.
\end{defn}

We shall later see that our chosen multiparameter generalisation of the persistence landscape also admits a decomposition as the $\text{kmax}$ of a series of simple landscape functions when our module is interval decomposable.

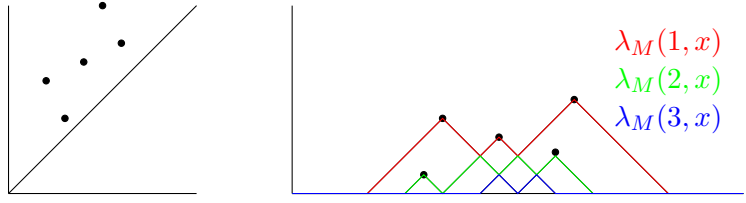
\begin{figure}
\begin{center}
\begin{tikzpicture}

\draw (0,0) -- (2.5,0);
\draw (0,0) -- (0,2.5);
\draw (0,0) -- (2.5,2.5);

\node at (0.5,1.5) [draw,circle,fill = black, inner sep = 0.3mm]{};
\node at (.75,1) [draw,circle,fill = black, inner sep = 0.3mm]{};
\node at (1,1.75) [draw,circle,fill = black, inner sep = 0.3mm]{};
\node at (1.5,2) [draw,circle,fill = black, inner sep = 0.3mm]{};
\node at (1.25,2.5) [draw,circle,fill = black, inner sep = 0.3mm]{};
\end{tikzpicture}
\hspace{1cm}
\begin{tikzpicture}

\draw (0,0) -- (6,0);
\draw (0,0) -- (0,2.5);

\draw (1,0) -- (2,1) -- (3,0);
\draw (1.5,0) -- (1.75,0.25) -- (2,0);
\draw (2,0) -- (2.75,0.75) -- (3.5,0);
\draw (3,0) -- (3.5,0.5) -- (4,0);
\draw (2.5,0) -- (3.75,1.25) -- (5,0);

\node at (2,1) [draw,circle,fill = black, inner sep = 0.3mm]{};
\node at (1.75,0.25) [draw,circle,fill = black, inner sep = 0.3mm]{};
\node at (2.75,0.75) [draw,circle,fill = black, inner sep = 0.3mm]{};
\node at (3.5,0.55) [draw,circle,fill = black, inner sep = 0.3mm]{};
\node at (3.75,1.25) [draw,circle,fill = black, inner sep = 0.3mm]{};

\node[align=right] at (5,2) {\textcolor{red}{$\lambda_M(1,x)$}};
\node[align=right] at (5,1.5) {\textcolor{green}{$\lambda_M(2,x)$}};
\node[align=right] at (5,1) {\textcolor{blue}{$\lambda_M(3,x)$}};

\draw[red] (0,0) -- (1,0) -- (2,1) -- (2.5,0.5) -- (2.75,0.75) -- (3,0.5) -- (3.75,1.25) -- (5,0) -- (6,0);
\draw[green] (0,0) -- (1.5,0) -- (1.75,0.25) -- (2,0) -- (2.5,0.5) -- (2.75,0.25) -- (3,0.5) -- (3.25,0.25) -- (3.5,0.5) -- (4,0) -- (6,0);
\draw[blue] (0,0) -- (2.5,0) -- (2.75,0.25) -- (3,0) -- (3.25,0.25) -- (3.5,0) -- (6,0);
\end{tikzpicture}
\end{center}
\caption{We show the persistence diagram of a single parameter module $M$ on the left and the associated persistence landscapes on the right. One can see that $\lambda_M(k,t)$ is the $\text{kmax}$ of the landscape functions $\lambda_{(b_j,d_j)}(1,t)$ of the interval summands of $M$.}
\label{fig:Persistence-Landscape-Example}
\end{figure}

\begin{lem}
The persistence landscape has the following properties:

\begin{enumerate}

\item $\lambda_{k}(t) \geq 0$
\item $\lambda_{k}(t) \geq \lambda_{k+1}(t)$
\item  $\lambda_{k}(t) \text{ is 1-Lipschitz}$
 
\end{enumerate}
The first two properties are immediate from the definition and the third property is proved in \cite{Bubenik:2015}.
\end{lem}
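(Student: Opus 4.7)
The plan is to treat the three properties in turn, with the first two being essentially bookkeeping and the third requiring the monotonicity of the rank invariant under interval inclusion.

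For property (1), I would observe that $\lambda_k(t)$ is defined as a supremum of a subset of $[0,\infty)$. If the set $\{h\geq 0 : \beta^{t-h,t+h}\geq k\}$ is nonempty, its supremum is automatically nonnegative; if it is empty, I would adopt Bubenik's convention $\sup\emptyset = 0$. Either way the conclusion is immediate.

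For property (2), I would note the set inclusion
\[
\{h\geq 0 : \beta^{t-h,t+h}\geq k+1\} \subseteq \{h\geq 0 : \beta^{t-h,t+h}\geq k\},
\]
so the supremum can only shrink when we require the stricter rank bound. This is a one-line argument.

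For property (3), the key input is the monotonicity statement that if $a\leq a'\leq b'\leq b$ then $\beta^{a,b}\leq \beta^{a',b'}$, which I would justify by factoring the map $M_a\to M_b$ through $M_{a'}\to M_{b'}$ and noting that the rank of a composition is bounded by the rank of any factor. Granted this, suppose $|t-s|=\delta$ and let $h$ satisfy $\beta^{t-h,t+h}\geq k$ with $h\geq \delta$. Then $[s-(h-\delta),\,s+(h-\delta)] \subseteq [t-h,\,t+h]$, so by the monotonicity statement $\beta^{s-(h-\delta),\,s+(h-\delta)}\geq k$, hence $\lambda_k(s)\geq h-\delta$. Taking the supremum over admissible $h$ yields $\lambda_k(s)\geq \lambda_k(t)-\delta$ when $\lambda_k(t)\geq\delta$, while when $\lambda_k(t)<\delta$ the inequality reduces to property (1). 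Swapping the roles of $s$ and $t$ and combining gives $|\lambda_k(t)-\lambda_k(s)|\leq |t-s|$.

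The main (and only) conceptual step is the monotonicity of $\beta^{a,b}$ under shrinking the interval; the rest is unpacking definitions. An edge case worth flagging is when $\lambda_k(t)=+\infty$: under the pointwise finite-dimensional, tame hypotheses stated just before the lemma, the barcode consists of finite intervals and this does not occur, so the 1-Lipschitz bound is meaningful as a statement in $\mathbb{R}$ rather than $\overline{\mathbb{R}}$.
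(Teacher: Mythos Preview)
Your argument is correct. The paper itself does not supply a proof of this lemma: it merely asserts that (1) and (2) are immediate and cites \cite{Bubenik:2015} for (3). Your treatment of (1) and (2) is exactly the unpacking one would expect, and your proof of (3) via monotonicity of the rank under interval shrinking is the standard argument; in fact it is precisely the strategy the paper later employs in proving the multiparameter analogue (the proposition that the Uniform Persistence $p$-Landscape is 1-Lipschitz), specialised to $n=1$.

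One small remark on your edge-case comment: the hypotheses stated before the lemma (pointwise finite dimensional, admitting an interval decomposition per Crawley-Boevey) do \emph{not} by themselves force all bars to be bounded, so $\lambda_k(t)=+\infty$ is not automatically excluded. This does not affect your proof of the Lipschitz inequality, which remains valid as a statement in $\overline{\mathbb{R}}$; the paper handles the possibility of infinite landscape values elsewhere by truncating to bounded regions when finiteness is needed.
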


\subsection{Multiparameter Persistence Landscapes}

Let us define the multiparameter persistence landscape in analogy with the single parameter case. The rank invariant, rank function and rescaled rank function defined above generalise naturally to multiparameter persistence modules:

\begin{defn}(Rank Invariant)

Let $M$ be an $\mathbb{R}^n$ persistence module, then  for $\vec{a}\leq \vec{b}$ the function $\beta^{\cdot,\cdot}$ giving the corresponding Betti number is the rank invariant of $M$:
$$ \beta^{\vec{a},\vec{b}} = \dim(\Ima(M(\vec{a}\leq \vec{b})))$$
\end{defn}

\begin{defn}(Multiparameter Rank Function)

The rank function $\rk : \mathbb{R}^{2n} \rightarrow \mathbb{R}$ is given by
$$\rk(\vec{b},\vec{d})= \begin{cases} \beta^{\vec{b},\vec{d}} & \text{if } \vec{b} \leq \vec{d}\\ 0 & \text{otherwise} \end{cases} $$
\end{defn}

\begin{defn}(Rescaled Multiparameter Rank Function)

The rescaled rank function $r : \mathbb{R}^{2n} \rightarrow \mathbb{R}$
$$r(\vec{m},\vec{h})= \begin{cases} \beta^{\vec{m}-\vec{h},\vec{m}+\vec{h}} & \text{if } \vec{h} \geq \vec{0}\\ 0 & \text{otherwise} \end{cases} $$
\end{defn}

One could perform statistical analysis directly to the rank function and rescaled rank function. Endowed with the natural vector space structure these functions are not stable with respect to the interleaving distance.

\begin{ex}(Rank Function Instability)
  For any $\varepsilon >  0 $ and $N\in\mathbb{N} $ the multiparameter persistence module $M = \langle (a_i,\vec{0}) \textrm{ for } i = 1,...,N \ |\ \vec{x}^\vec{\varepsilon}\cdot a_i  \textrm{ for } i = 1,...,N \rangle$ is such that $\| \rk_M\|_\infty = N$ and $d^\omega(M,0) = \varepsilon$.
\end{ex}

We wish to define a stable invariant and so we derive a landscape function from the rank invariant. 
There are several choices to make in defining the persistence landscape for the multiparameter setting. We outline three natural generalisations from the single parameter landscape:

\begin{defn}(Cartesian Product $p$-Landscape)
This choice corresponds to taking the ordinary Persistence Landscape function in each coordinate and then applying the $p$-norm, $\lambda_{p} : \mathbb{N} \times \mathbb{R}^n \rightarrow \overline{\mathbb{R}}$.
$$ \lambda_{p}(k,\vec{x}) = \|(\sup\{ h_i \geq 0 : \beta^{\vec{x}-h_i\vec{e}_i,\vec{x}+h_i\vec{e}_i} \geq k\})_i \|_{p}$$
\end{defn}

\begin{defn}(Maximal Persistence $p$-Landscape)
This landscape searches for the line through $\vec{x}$ in the parameter space over which $k$ features persist for the longest interval about $\vec{x}$ along this line, and evaluates at half the length of this interval $\lambda_{p} : \mathbb{N} \times \mathbb{R}^n \rightarrow \overline{\mathbb{R}}$.
$$ \lambda_{p}(k,\vec{x}) = \sup\{ \|\vec{h} \|_{p} \geq 0 : \beta^{\vec{x}-\vec{h},\vec{x}+\vec{h}} \geq k\}$$
\end{defn}

\begin{defn}(Uniform Persistence $p$-Landscape)
This landscape considers the maximal length over which $k$ features persist in every (positive) direction through $\vec{x}$ in the parameter space $\lambda_{p} : \mathbb{N} \times \mathbb{R}^n \rightarrow \overline{\mathbb{R}}$.
$$ \lambda_{p}(k,\vec{x}) = \sup\{ \varepsilon \geq 0 : \beta^{\vec{x}-\vec{h},\vec{x}+\vec{h}} \geq k \text{ for all } \vec{h} \geq \vec{0} \text{ with } \| \vec{h} \|_{p} \leq \varepsilon\}$$
\end{defn}




It is worth noting that when restricted to the single parameter case all three of these definitions coincide with the single parameter Persistence Landscape \cite{Bubenik:2015}. However, the Cartesian Product Landscape and Maximal Persistence Landscape are not necessarily continuous (Figure \ref{Discontinuity}). In contrast we can show that the Uniform Persistence $p$-Landscape is 1-Lipschitz.

\begin{figure}
\begin{center}
\begin{tikzpicture}

\draw (0,0) -- (5,0);
\draw (0,0) -- (0,5);
\filldraw[fill=green, fill opacity = 0.2, draw=white] (1,1) rectangle (5,5);
\node at (0.9,2) [draw,circle,fill = black, inner sep = 0.3mm, label=left:$\vec{a}$]{};
\node at (1.1,2) [draw,circle,fill = black, inner sep = 0.3mm, label=right:$\vec{a + \varepsilon}$]{};
\draw[<->, draw=red] (1.1,2) -- (1.1,1) node[midway,right]{$h_2$};
\end{tikzpicture}
\hspace{1cm}
\begin{tikzpicture}

\draw (0,0) -- (5,0);
\draw (0,0) -- (0,5);
\filldraw[fill=green, fill opacity = 0.2,draw opacity = 0 ] (1,2) rectangle (5,5);
\filldraw[fill=green, fill opacity = 0.2,draw opacity = 0] (2,1) rectangle (5,5);

\filldraw[fill = red, fill opacity = 0.3, draw opacity = 0](3,2.5) -- (2.5,2.5) arc (180:270:0.5cm) -- cycle ;
\filldraw[fill = red, fill opacity = 0.3, draw opacity = 0](3,2.5) -- (3.5,2.5) arc (0:90:0.5cm) -- cycle ;
\node at (3,2.5) [draw,circle,fill = black, inner sep = 0.3mm, label=right:$\vec{a}$]{};

\filldraw[fill = red, fill opacity = 0.3, draw opacity = 0](2.5,4.5) -- (1,4.5) -- (2.5,3) -- cycle ;
\filldraw[fill = red, fill opacity = 0.3, draw opacity = 0](2.5,4.5) -- (4,4.5) -- (3.5,5) -- (2.5,5) -- cycle ;
\node at (2.5,4.5) [draw,circle,fill = black, inner sep = 0.3mm, label=right:$\vec{b}$]{};

\filldraw[fill = red, fill opacity = 0.3, draw opacity = 0](4,1.75) rectangle (3.25,1);
\filldraw[fill = red, fill opacity = 0.3, draw opacity = 0](4,1.75) rectangle (4.75,2.5);
\node at (4,1.75) [draw,circle,fill = black, inner sep = 0.3mm, label=right:$\vec{c}$]{};
\end{tikzpicture}
\end{center}

\caption{Both diagrams are cartoons of 2-parameter free persistence modules, where the intensity of shading at each point is proportional to the dimension of the persistence module at that choice of parameter values.
The first diagram represents a module with one free generator. This simple module exhibits the discontinuity of the Cartesian Product and Maximal Persistence Landscapes.
The second diagram illustrates the \textit{half-balls} over which persistence of $k$ homological features is measured by the Uniform Persistence Landscape for different choices of $k$ and $p$-norm. At $\vec{a}$ we sketch the maximal half-ball for the 2-norm over which 2 features persist. At $\vec{b}$ we sketch the maximal half-ball for the 1-norm over which 1 feature persists. At $\vec{c}$ we sketch the maximal half-ball for the $\infty$-norm over which 1 feature persists. }
\label{Discontinuity}
\end{figure}
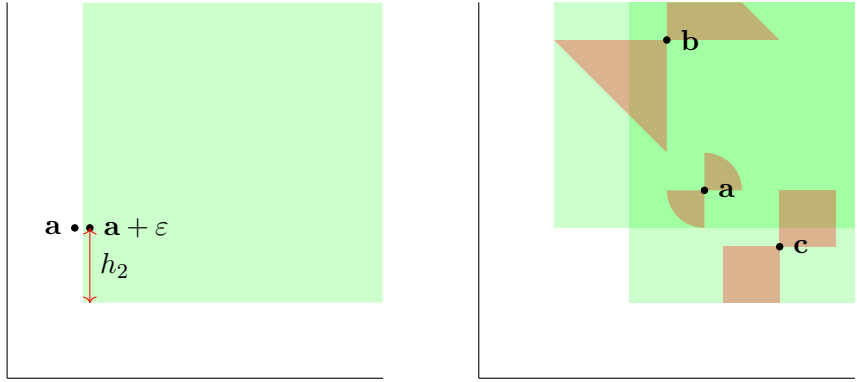

\begin{prop}

The Uniform Persistence $p$-Landscape is 1-Lipschitz for all $p \in [1,\infty]$

\end{prop}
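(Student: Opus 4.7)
The plan is to combine a monotonicity property of the rank invariant with the triangle inequality for the $p$-norm, after reducing to a one-sided inequality by symmetry. First I would observe (essentially from the functoriality of $M$) that whenever $\vec{a} \leq \vec{a}' \leq \vec{b}' \leq \vec{b}$ the factorisation $M(\vec{a}\leq\vec{b}) = M(\vec{b}'\leq\vec{b}) \circ M(\vec{a}'\leq\vec{b}') \circ M(\vec{a}\leq\vec{a}')$ gives $\beta^{\vec{a},\vec{b}} \leq \beta^{\vec{a}',\vec{b}'}$; that is, shrinking the query interval can only increase the rank invariant. Since the $p$-norm is symmetric in $\vec{x}$ and $\vec{y}$, it will suffice to prove the one-sided estimate $\lambda_{p}(k,\vec{y}) \geq \lambda_{p}(k,\vec{x}) - \|\vec{x}-\vec{y}\|_{p}$, and the regime $\lambda_{p}(k,\vec{x}) \leq \|\vec{x}-\vec{y}\|_{p}$ is trivial since $\lambda_{p} \geq 0$.

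The crucial geometric observation is that setting $\vec{d} := \vec{x}-\vec{y}$ and \emph{enlarging} any $\vec{h}\geq\vec{0}$ to $\vec{h}' := \vec{h} + |\vec{d}|$ (with componentwise absolute value), we have the containment $[\vec{y}-\vec{h},\vec{y}+\vec{h}] \subseteq [\vec{x}-\vec{h}',\vec{x}+\vec{h}']$, since the required coordinatewise inequalities reduce to $\pm d_{i} \leq |d_{i}|$. The matching norm estimate $\|\vec{h}'\|_{p} \leq \|\vec{h}\|_{p} + \|\vec{d}\|_{p}$ then follows from the triangle inequality together with the fact that the $p$-norm depends only on the absolute values of coordinates. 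Given any $\vec{h}\geq\vec{0}$ with $\|\vec{h}\|_{p} < \lambda_{p}(k,\vec{x}) - \|\vec{d}\|_{p}$, this yields $\|\vec{h}'\|_{p} < \lambda_{p}(k,\vec{x})$; unravelling the supremum in the definition of $\lambda_{p}(k,\vec{x})$ gives $\beta^{\vec{x}-\vec{h}',\vec{x}+\vec{h}'} \geq k$, and monotonicity of the rank invariant forces $\beta^{\vec{y}-\vec{h},\vec{y}+\vec{h}} \geq k$. Taking the supremum over such $\vec{h}$ completes the one-sided inequality.

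I do not anticipate a serious obstacle, but the one real subtlety is choosing the correct perturbation of $\vec{h}$. The naive dual attempt of \emph{shrinking} the $\vec{x}$-half-ball to fit inside the $\vec{y}$-half-ball via $\vec{h}' = \max(\vec{0}, \vec{h}-|\vec{d}|)$ breaks down, because whenever $|d_{i}| > h_{i}$ in some coordinate no nonnegative $\vec{h}'_{i}$ can force the containment along that axis. Enlarging at $\vec{x}$ rather than shrinking bypasses this issue and, combined with the symmetry of the $p$-norm under sign changes on coordinates, handles all $p \in [1,\infty]$ uniformly.
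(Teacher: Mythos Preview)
Your proposal is correct and follows essentially the same route as the paper: both arguments enlarge the half-ball at $\vec{x}$ by the coordinatewise absolute difference $|\vec{x}-\vec{y}|$, verify the containment $[\vec{y}-\vec{h},\vec{y}+\vec{h}] \subseteq [\vec{x}-\vec{h}',\vec{x}+\vec{h}']$, and control $\|\vec{h}'\|_p$ via the triangle inequality. Your use of a strict inequality $\|\vec{h}\|_p < \lambda_p(k,\vec{x}) - \|\vec{d}\|_p$ is in fact slightly cleaner than the paper's non-strict version, since it sidesteps the question of whether the supremum defining $\lambda_p(k,\vec{x})$ is attained.
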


\begin{proof}
Let $\vec{x},\vec{y} \in \mathbb{R}^n$ and let $k \in \mathbb{N}$. Without loss of generality assume that $\lambda_p(k,\vec{x}) \geq \lambda_p(k,\vec{y}) $ and that also $r = \lambda_p(k,\vec{x}) \geq \|\vec{x} - \vec{y} \|_p = \delta $. We seek to show that $\lambda_p(k,\vec{y}) \geq \lambda_p(k,\vec{x}) - \|\vec{x} - \vec{y} \|_p$.

For any $\vec{\varepsilon} \geq \vec{0}$ such that $\|\vec{\varepsilon}\|_p \leq r - \delta $ let us define $\vec{h} = (|x_i - y_i| + \varepsilon_i)_i$. Then we observe that $\|\vec{h}\|_p \leq \|\vec{x} - \vec{y}\|_p + \|\vec{\varepsilon}\|_p \leq r$ and in particular that: 
$$ \vec{x} - \vec{h} \leq \vec{y} - \vec{\varepsilon} \leq \vec{y} + \vec{\varepsilon} \leq \vec{x} + \vec{h}$$
Thus $\|\vec{h}\|_p \leq r$ means the map $M(\vec{x} - \vec{h} \leq \vec{x} + \vec{h})$ which factors through
$M( \vec{y} - \vec{\varepsilon} \leq \vec{y} + \vec{\varepsilon})$ has rank at least $k$. Since $\vec{\varepsilon}$ was arbitrary we see that $\lambda_p(k,\vec{y}) \geq r - \delta$
\end{proof}

It is clear that the discriminating power of the proposed multiparameter landscapes will be restricted by the discriminating power of the rank invariant. It is well known, that non-isomorphic modules may have the same rank invariant see Figure \ref{Non-Isomorphic}.

\begin{figure}
\begin{center}
\begin{tikzpicture}

\draw (0,0) -- (5,0);
\draw (0,0) -- (0,5);
\filldraw[fill=green, fill opacity = 0.2,draw opacity = 0 ] (1,2) rectangle (5,5);
\filldraw[fill=green, fill opacity = 0.2,draw opacity = 0] (2,1) rectangle (5,5);
\node at (2,1) [draw,circle,fill = black, inner sep = 0.3mm, label=right:$\vec{a}$]{};
\node at (1,2) [draw,circle,fill = black, inner sep = 0.3mm, label=right:$\vec{b}$]{};
\end{tikzpicture}
\hspace{1cm}
\begin{tikzpicture}

\draw (0,0) -- (5,0);
\draw (0,0) -- (0,5);
\filldraw[fill=green, fill opacity = 0.2,draw opacity = 0 ] (1,2) rectangle (5,5);
\filldraw[fill=green, fill opacity = 0.2,draw opacity = 0] (2,1) rectangle (5,5);
\node at (2,1) [draw,circle,fill = black, inner sep = 0.3mm, label=right:$\vec{a}$]{};
\node at (1,2) [draw,circle,fill = black, inner sep = 0.3mm, label=right:$\vec{b}$]{};
\node at (2,2) [draw,circle,fill = black, inner sep = 0.3mm, label=right:$\vec{c}$]{};
\end{tikzpicture}
\end{center}
\caption{The diagrams are cartoons of the 2 parameter persistence modules $M,N$, thought of as elements of $\mathbb{R}[x_1,x_2]-\textbf{Mod}$ with presentations given by: $ M = \langle(a,(2,1)) (b,(1,2))| \rangle$ and $ N = \langle(a,(2,1)) (b,(1,2)) (c,(2,2))| x_2\cdot {a} = x_1 \cdot {b} \rangle$. The rank invariant of these non-isomorphic modules coincide.}
\label{Non-Isomorphic}
\end{figure}
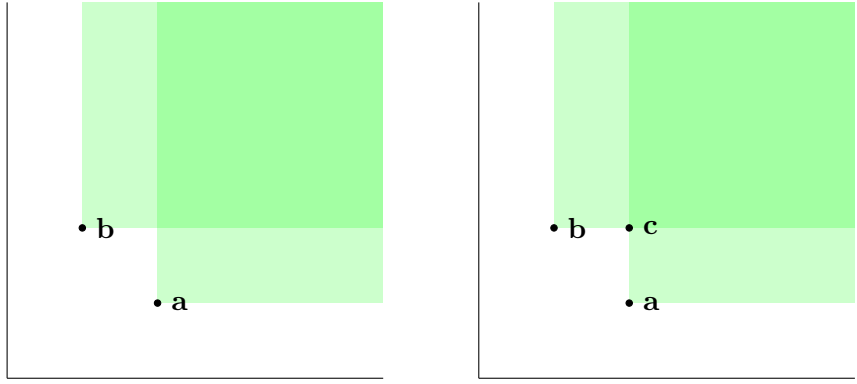

We shall proceed with the Uniform Persistence $p$-Landscape as our chosen generalisation of the single parameter persistence landscape since these landscapes are stable with respect to interleaving and the Cartesian and Maximal Landscapes are not. The discontinuity and consequent lack of stability of the Cartesian and Maximal Landscapes is related to the results in  \cite{Landi2014} regarding the stability of the fibered barcode space.
\begin{Remark}
By considering the inclusion of $n$-dimensional balls with respect to the $p$-norm and $p'$-norm, we observe that our uniform landscape functions for different choices of norm are related as follows:
$$ c_n\lambda_{p}(k,\vec{x}) \leq \lambda_{p'}(k,\vec{x}) \leq \frac{1}{C_n}\lambda_{p}(k,\vec{x})$$
Where $c_n,C_n$ are the largest constants satisfying the inclusions of the zero centred balls $B_{p'}(c_n) \subseteq B_{p}(1)$ and $B_{p}(C_n) \subseteq B_{p'}(1)$
\label{remark:qrelated}
\end{Remark}

For purposes of computation it is most convenient to use the $\infty$-landscape. In order to calculate the value of the $\infty$-landscape $\lambda_\infty(k,\vec{x})$ at the point $\vec{x}=\vec{x}_0$ one only needs to calculate $\sup\{\varepsilon\geq 0 : \beta^{\vec{x}_0-\varepsilon\vec{1},\vec{x}_0 + \varepsilon\vec{1}} \geq k\}$. Thus we only need to compute a single barcode in the fibered barcode space to compute the landscape value at a point. 

\begin{prop}
Let $M$ be a persistence module and let $\vec{L}$ be a line of slope $\vec{1}$ through the parameter space $\vec{R}^n$. Let $\iota_L : (\vec{R},\|\cdot\|_\infty) \to (\vec{R}^n,\|\cdot\|_\infty) $ denote the isometric embedding with $\iota_L(\vec{R}) = \vec{L}$ and $\iota_L(0) \in \{x_n =0\}$. Then the restriction of the uniform $\infty$-landscape of $M$ to $L$ and the single parameter persistence landscape of the persistence module $M^L$ coincide, $\lambda_{M^L}(k,t) = \lambda_{M}(k, \iota_L(t))$.
\end{prop}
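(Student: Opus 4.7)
The plan is to show that, when we restrict to the $\infty$-norm and to the diagonal direction $\vec{1}$, the ``for all $\vec{h}$'' in the definition of the uniform landscape collapses to a single condition, and that this condition is precisely what appears in the single parameter landscape of $M^L$.

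First I would unfold both sides. Writing $\vec{x} = \iota_L(t)$, I have by definition
\[
\lambda_M(k,\iota_L(t)) = \sup\bigl\{\varepsilon \geq 0 : \beta^{\vec{x}-\vec{h},\vec{x}+\vec{h}} \geq k \text{ for all } \vec{h} \geq \vec{0} \text{ with } \|\vec{h}\|_\infty \leq \varepsilon\bigr\},
\]
while on the line side $\iota_L$ is isometric and sends $t-h$ to $\iota_L(t) - h\vec{1}$ and $t+h$ to $\iota_L(t)+h\vec{1}$, so by functoriality of $M$,
\[
\beta^{t-h,t+h}_{M^L} \;=\; \dim \Ima\bigl(M(\iota_L(t)-h\vec{1} \leq \iota_L(t)+h\vec{1})\bigr) \;=\; \beta^{\vec{x}-h\vec{1},\vec{x}+h\vec{1}}_{M}.
\]
Hence $\lambda_{M^L}(k,t) = \sup\{h \geq 0 : \beta^{\vec{x}-h\vec{1},\vec{x}+h\vec{1}} \geq k\}$.

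The main step is the reduction of the ``for all $\vec{h}$'' clause to the single case $\vec{h} = \varepsilon \vec{1}$. The key point is monotonicity of the rank invariant: whenever $\vec{a}' \leq \vec{a} \leq \vec{b} \leq \vec{b}'$, the morphism $M(\vec{a}' \leq \vec{b}')$ factors as $M(\vec{b} \leq \vec{b}') \circ M(\vec{a}\leq \vec{b}) \circ M(\vec{a}' \leq \vec{a})$, giving $\beta^{\vec{a}',\vec{b}'} \leq \beta^{\vec{a},\vec{b}}$. Now for $\vec{h} \geq \vec{0}$ with $\|\vec{h}\|_\infty \leq \varepsilon$ we have $\vec{h} \leq \varepsilon\vec{1}$, so that $\vec{x} - \varepsilon\vec{1} \leq \vec{x} - \vec{h}$ and $\vec{x} + \vec{h} \leq \vec{x} + \varepsilon\vec{1}$, and therefore $\beta^{\vec{x}-\varepsilon\vec{1},\vec{x}+\varepsilon\vec{1}} \leq \beta^{\vec{x}-\vec{h},\vec{x}+\vec{h}}$. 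This shows that the universal quantifier is equivalent to its value at $\vec{h} = \varepsilon\vec{1}$, i.e.
\[
\lambda_M(k,\vec{x}) \;=\; \sup\bigl\{\varepsilon \geq 0 : \beta^{\vec{x}-\varepsilon\vec{1},\vec{x}+\varepsilon\vec{1}} \geq k \bigr\}.
\]

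Comparing the two formulas gives $\lambda_M(k,\iota_L(t)) = \lambda_{M^L}(k,t)$. The only non-routine ingredient is the quantifier reduction, and this is precisely the feature that makes the $\infty$-norm the convenient choice for computation along diagonal slices; the same proof does not go through for finite $p$, since there the closed $p$-ball is not a product of intervals and no single $\vec{h}$ dominates the condition.
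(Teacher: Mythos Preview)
Your proof is correct and follows essentially the same route as the paper: both arguments hinge on the observation that, for the $\infty$-norm, $\vec{h}\geq\vec{0}$ with $\|\vec{h}\|_\infty\leq\varepsilon$ implies $\vec{h}\leq\varepsilon\vec{1}$, so rank monotonicity collapses the universal quantifier to the single diagonal vector $\varepsilon\vec{1}$. The paper phrases this as two inequalities via a threshold argument, while you simplify both sides to a common expression; the content is the same, and your exposition of the monotonicity step is in fact more explicit.
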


\begin{proof}
Following the landscape definitions we see that if $\lambda_{M}(k, \iota_L(t)) > h$ then we have that $\beta_M^{\iota_L(t)-h\vec{1},\iota_L(t)+h\vec{1}} = \beta_{M^L}^{t-h,t+h} \geq k$ and so $\lambda_{M^L}(k,t) > h$ thus $\lambda_{M^L}(k,t) \geq \lambda_{M}(k, \iota_L(t))$. Conversely, if $\lambda_{M^L}(k,t) > h$ then $\beta_M^{\iota_L(t)-h\vec{1},\iota_L(t)+h\vec{1}} \geq k$, and since we are working with the infinity-norm if $\vec{h} \geq \vec{0}$ with $\|\vec{h}\|_\infty \leq h$ then  $\vec{h} \leq  h\vec{1}$ and so $\beta_M^{\iota_L(t)-\vec{h},\iota_L(t)+h\vec{h}} \geq k$ and thus $\lambda_{M^L}(k,t) \leq \lambda_{M}(k, \iota_L(t))$.
\end{proof}

\begin{prop}
The uniform persistence $\infty$-landscape of an interval decomposable module $M \cong \bigoplus_j \mathds{1}^{I_j}$ can be expressed as the pointwise maximum $\lambda_\infty(M)(k,\vec{x}) = \text{kmax}_j \ \lambda_\infty(I_j)(1,\vec{x})$. Note that this is only true for the uniform persistence $\infty$-landscape, and not for other choices of $p$.
 \end{prop}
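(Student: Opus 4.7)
The plan is to exploit the fact that the $\infty$-norm is special: among $\vec{h}\geq \vec{0}$ with $\|\vec{h}\|_\infty \leq \varepsilon$ there is a unique componentwise-maximum element $\vec{h}=\varepsilon\vec{1}$. Since $\vec{h}_1\geq \vec{h}_2\geq \vec{0}$ implies $[\vec{x}-\vec{h}_2,\vec{x}+\vec{h}_2]\subseteq [\vec{x}-\vec{h}_1,\vec{x}+\vec{h}_1]$, the structure map $M(\vec{x}-\vec{h}_1\leq \vec{x}+\vec{h}_1)$ factors through $M(\vec{x}-\vec{h}_2\leq \vec{x}+\vec{h}_2)$, so the rank is antitone in $\vec h$. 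Hence the ``for all $\vec h$'' clause in the definition of the uniform $\infty$-landscape collapses, and for any persistence module $N$,
\[ \lambda_\infty(N)(k,\vec{x}) = \sup\{\varepsilon\geq 0 : \beta_N^{\vec{x}-\varepsilon\vec{1},\vec{x}+\varepsilon\vec{1}}\geq k\}. \]
This reduction (already noted in the paper) is the first move.

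Next I would analyse a single interval summand. For an interval $I_j$, I claim $\beta^{\vec{x}-\varepsilon\vec{1},\vec{x}+\varepsilon\vec{1}}_{\mathds{1}^{I_j}}=1$ precisely when both endpoints lie in $I_j$, and $0$ otherwise; the nontrivial direction uses convexity of $I_j$ to force the whole chain from $\vec{x}-\varepsilon\vec{1}$ to $\vec{x}+\varepsilon\vec{1}$ into $I_j$, making the internal map the identity on a one-dimensional space. Writing $\varepsilon_j := \lambda_\infty(\mathds{1}^{I_j})(1,\vec{x})$, a second application of convexity shows that the set $\{\varepsilon\geq 0 : \vec{x}\pm\varepsilon\vec{1}\in I_j\}$ is downward closed in $[0,\infty)$ with supremum $\varepsilon_j$. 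In particular $\varepsilon<\varepsilon_j$ forces both endpoints into $I_j$, while $\varepsilon>\varepsilon_j$ forces at least one outside.

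Finally, additivity of rank across the direct sum gives
\[ \beta_M^{\vec{x}-\varepsilon\vec{1},\vec{x}+\varepsilon\vec{1}} = \sum_j \beta^{\vec{x}-\varepsilon\vec{1},\vec{x}+\varepsilon\vec{1}}_{\mathds{1}^{I_j}} = \#\{j : \vec{x}\pm\varepsilon\vec{1}\in I_j\}. \]
For $\varepsilon<\mathrm{kmax}_j\,\varepsilon_j$ at least $k$ indices satisfy $\varepsilon<\varepsilon_j$ and therefore contribute, so the rank is $\geq k$; for $\varepsilon>\mathrm{kmax}_j\,\varepsilon_j$ fewer than $k$ indices satisfy $\varepsilon_j\geq\varepsilon$ and therefore at most finitely many contribute, so the rank is $<k$. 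Taking the sup over $\varepsilon$ then yields $\lambda_\infty(M)(k,\vec{x})=\mathrm{kmax}_j\,\varepsilon_j$. The only delicate point is handling the boundary case $\varepsilon=\varepsilon_j$ (when the defining set is half-open versus closed), but this is absorbed by approaching the sup through strict inequalities. The restriction to $p=\infty$ is essential: for $p<\infty$ the positive portion of the $p$-ball has no componentwise-maximum element, so the ``for all $\vec h$'' clause genuinely depends on many directions and each interval summand's contribution varies with $\vec h$, breaking the clean $\mathrm{kmax}$ formula.
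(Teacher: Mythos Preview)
The paper states this proposition without proof, so there is no argument to compare against directly. Your proof is correct. The key reduction---that for the $\infty$-norm the universal quantifier over $\vec{h}$ collapses to the single vector $\varepsilon\vec{1}$ by antitonicity of rank---is exactly the observation the paper records just before this proposition, and from there your counting argument via additivity of rank on direct sums and convexity of intervals goes through. One small slip: in the final step you write ``at most finitely many contribute'' where you mean ``fewer than $k$ contribute''; the finiteness is not the point.

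An alternative route, implicit in the paper's organisation, is to invoke the immediately preceding proposition: the $\infty$-landscape of $M$ at $\vec{x}$ equals the single-parameter landscape of $M^L$ at the corresponding point, where $L$ is the slope-$\vec{1}$ line through $\vec{x}$. Since restriction to $L$ commutes with direct sums and sends each $\mathds{1}^{I_j}$ to the interval module $\mathds{1}^{I_j\cap L}$, the single-parameter $\mathrm{kmax}$ formula (stated earlier in the paper) yields the result. This is shorter but relies on the single-parameter theory as a black box; your argument is self-contained and makes the role of the $\infty$-norm more transparent.
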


The following propositions demonstrate the information one loses in deriving the Uniform Persistence Landscape from the rank invariant. In the single parameter case the landscape captures almost all the information available from the rank invariant, whilst for the multiparameter landscape the rank invariant is only recoverable from the landscapes on pairs of points spanning hypercubes.

\begin{prop}
Let $M\in \vec{vect}^{\vec{R}}$ be a pointwise-finite-dimensional, finitely generated persistence module and let $\lambda : \mathbb{N} \times \mathbb{R} \to \mathbb{R}$ be the associated persistence landscape. Then $\lambda$ determines the rank invariant almost everywhere.
\end{prop}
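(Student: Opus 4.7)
The plan is to show directly that the landscape determines the rescaled rank function $r(t,h) = \beta^{t-h,t+h}$ almost everywhere, and then transfer this conclusion to the rank function $\rk(a,b)$ via the affine change of variables $(a,b) = (t-h, t+h)$, which is a diffeomorphism between the upper triangle $\{a \leq b\}$ and the upper half plane $\{h \geq 0\}$ and thus preserves null sets.

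First I would fix $t \in \mathbb{R}$ and study the one-variable function $h \mapsto \beta^{t-h,t+h}$. Since $h \leq h'$ forces $[t-h,t+h] \subseteq [t-h',t+h']$, any interval of the barcode containing the larger interval contains the smaller, so this function is non-increasing, non-negative, and integer-valued. Because $M$ is finitely generated and pointwise finite dimensional, its barcode is finite, so $h \mapsto \beta^{t-h,t+h}$ is a step function with only finitely many jump discontinuities; its values lie in $\{0,1,\ldots,N\}$ where $N$ bounds the number of bars.

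Next I would invert the landscape definition. Setting $A_k(t) = \{h \geq 0 : \beta^{t-h,t+h} \geq k\}$, monotonicity makes $A_k(t)$ downward-closed, hence an interval with right endpoint $\lambda(k,t)$. For any $h \geq 0$ that is not a jump point of $h \mapsto \beta^{t-h,t+h}$, one has the clean identity
\[
r(t,h) = \beta^{t-h,t+h} = \#\{k \geq 1 : \lambda(k,t) \geq h\},
\]
because if the common value equals $m$, then $\lambda(k,t) \geq h$ precisely when $k \leq m$. Thus, for each fixed $t$, the landscape determines $r(t,\cdot)$ off the finite exceptional set of jump points, which has one-dimensional Lebesgue measure zero.

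Finally I would apply Fubini: the set $E \subset \mathbb{R} \times [0,\infty)$ of points where recovery can fail has zero vertical slice at every $t$, so $E$ has two-dimensional measure zero. Transferring through the change of variables $(t,h) \mapsto (t-h,t+h)$, which has constant nonzero Jacobian, the corresponding exceptional set in the $(a,b)$-plane also has measure zero, and on its complement the rank invariant $\beta^{a,b}$ is recovered from the landscape by the displayed counting formula. The only real subtlety is the almost-everywhere qualifier, which is forced by the possibility of achieving or missing the supremum at the finitely many jump values of $h$; everywhere else the reconstruction is exact.
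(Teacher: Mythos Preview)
Your argument is correct and uses the same recovery formula as the paper, namely $\beta^{a,b}=\max\{k:\lambda(k,\tfrac{a+b}{2})\geq\tfrac{b-a}{2}\}$ (your counting version is equivalent since $\lambda(\cdot,t)$ is non-increasing). The only difference is in how the null exceptional set is identified: the paper names it directly as the pairs $(a,b)$ with $a$ or $b$ in the finite set $C=\text{supp}(\xi_0)\cup\text{supp}(\xi_1)$ of bar endpoints, whereas you describe it slice-by-slice as jump points of $h\mapsto\beta^{t-h,t+h}$ and then invoke Fubini. These are the same set---your jump points occur exactly when $t-h$ or $t+h$ hits a bar endpoint---so in $(t,h)$-coordinates the exceptional set is a finite union of lines, which makes the measurability needed for your Fubini step immediate (and in fact makes Fubini unnecessary). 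The paper's description is a touch more direct, but your route is equally valid.
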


\begin{proof}
Let $C = \text{supp}(\xi_0 )\cup \text{supp}(\xi_1)$, then for $a,b\in (\mathbb{R}\setminus C)^2$ we recover the rank invariant by observing $\beta^{a,b} = \max \{k\in \mathbb{N} \ | \ \lambda(k, \frac{a+b}{2}) \geq \frac{b-a}{2} \}$. Since $\beta^{a,b} \geq k \implies \lambda(k, \frac{a+b}{2}) \geq \frac{b-a}{2}$ and conversely, since $a,b \notin C$ and $M$ finitely generated, we note that $\lambda(k, \frac{a+b}{2}) \geq \frac{b-a}{2} \implies \lambda(k, \frac{a+b}{2}) > \frac{b-a}{2}$ and thus $\beta^{a.b}\geq k$.
\end{proof}

\begin{prop}
\label{Hypercube}
Let $M\in \vec{vect}^{\vec{R}^n}$ be a pointwise-finite-dimensional, finitely presented 
persistence module and let $\lambda_\infty : \mathbb{N} \times \mathbb{R}^n \to \mathbb{R}$ be the associated uniform persistence landscape. Let $C \subset \mathbb{R}^n$ be the set of points sharing a coordinate with $\text{supp}(\xi_0) \cup \text{supp}(\xi_1)$:
$$ C = \{\vec{x} \in \mathbb{R}^n \ |\ x_j = a_j \text{ for some } \vec{a} \in \text{supp}(\xi_0) \cup \text{supp}(\xi_1) \} $$
Then using $\lambda_\infty$ we can recover the rank invariant of $M$ on the set:
$$L = \{(\vec{a},\vec{b}) \in (\mathbb{R}^n\setminus C)^2 \  | \ \vec{a}\leq \vec{b}, \ [\vec{a},\vec{b}] \text{ is a hypercube }  \}$$
\end{prop}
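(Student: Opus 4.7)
The plan is to mimic the single-parameter argument from the previous proposition, establishing the recovery formula
$$\beta^{\vec{a},\vec{b}} \;=\; \max\{k \in \mathbb{N} \,:\, \lambda_\infty(k, \vec{m}) \geq h\}, \qquad \vec{m} := \tfrac{1}{2}(\vec{a}+\vec{b}), \ h := \tfrac{1}{2}(b_1 - a_1),$$
where $h$ is well-defined because $[\vec{a},\vec{b}]$ is a hypercube. First I would simplify the uniform $\infty$-landscape at $\vec{m}$. For $\vec{h} \geq \vec{0}$, the constraint $\|\vec{h}\|_\infty \leq \varepsilon$ is equivalent to $\vec{h} \leq \varepsilon\vec{1}$, so $[\vec{m}-\vec{h},\vec{m}+\vec{h}] \subseteq [\vec{m}-\varepsilon\vec{1},\vec{m}+\varepsilon\vec{1}]$; factoring $M(\vec{m}-\varepsilon\vec{1}\leq \vec{m}+\varepsilon\vec{1})$ through $M(\vec{m}-\vec{h}\leq \vec{m}+\vec{h})$ shows that rank decreases as the box enlarges, so
$$\lambda_\infty(k,\vec{m}) \;=\; \sup\{\varepsilon \geq 0 \,:\, \beta^{\vec{m}-\varepsilon\vec{1},\vec{m}+\varepsilon\vec{1}} \geq k\}.$$
The forward implication $\beta^{\vec{a},\vec{b}} \geq k \Rightarrow \lambda_\infty(k,\vec{m}) \geq h$ is then immediate by taking $\varepsilon = h$.

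For the converse, if $\lambda_\infty(k,\vec{m}) > h$ strictly, monotonicity under box enlargement finishes it. The delicate case is equality $\lambda_\infty(k,\vec{m}) = h$: here the supremum is only approached, so I would extract a sequence $\varepsilon_n \uparrow h$ with $\beta^{\vec{m}-\varepsilon_n\vec{1},\vec{m}+\varepsilon_n\vec{1}} \geq k$ and close the gap using the hypothesis $\vec{a},\vec{b} \notin C$.

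The key technical step, and the main obstacle, is a local-constancy lemma for the rank function near $(\vec{a},\vec{b})$. Since $M$ is finitely presented, $S := \text{supp}(\xi_0)\cup \text{supp}(\xi_1)$ is finite. The assumption $\vec{a}\notin C$ means no coordinate $a_i$ lies in the finite set $\{s_i : s \in S\}$, so I can pick $\delta > 0$ such that for every $i$ the interval $(a_i, a_i+\delta)$ contains no element of this set (and symmetrically for $\vec{b}$). Inspecting the presentation $\text{Free}[\mathcal{X}]/\langle \mathcal{R}\rangle$, the set $\{j : \text{gr}(j) \leq \vec{a}'\}$ of active generators and the set $\{r : \text{gr}(r) \leq \vec{a}'\}$ of active relations are constant as $\vec{a}'$ ranges over $[\vec{a},\vec{a}+\delta\vec{1})$, since crossing such a coordinate threshold would require moving some $a_i'$ past an $s_i$ with $s \in S$. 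Consequently the internal map $M(\vec{a}\leq \vec{a}')$ is the identity on $M_{\vec{a}}$ in the natural basis, hence an isomorphism; the analogous statement provides an isomorphism $M(\vec{b}'\leq \vec{b})$ for $\vec{b}' \in (\vec{b}-\delta\vec{1},\vec{b}]$.

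Finally, shrinking $\delta$ further so that $\vec{a}+\delta\vec{1} \leq \vec{b}-\delta\vec{1}$, for $n$ sufficiently large that $\varepsilon_n > h - \delta$ the points $\vec{a}_n := \vec{m}-\varepsilon_n\vec{1}$ and $\vec{b}_n := \vec{m}+\varepsilon_n\vec{1}$ lie in the specified cubes around $\vec{a}$ and $\vec{b}$, with $\vec{a} \leq \vec{a}_n \leq \vec{b}_n \leq \vec{b}$. The factorization
$$M(\vec{a}\leq \vec{b}) \;=\; M(\vec{b}_n\leq \vec{b}) \,\circ\, M(\vec{a}_n\leq \vec{b}_n) \,\circ\, M(\vec{a}\leq \vec{a}_n)$$
sandwiches the map of interest between two isomorphisms, so $\beta^{\vec{a},\vec{b}} = \beta^{\vec{a}_n,\vec{b}_n} \geq k$. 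This closes the converse inequality and completes the recovery formula, proving the rank invariant is determined by $\lambda_\infty$ on the hypercube locus $L$.
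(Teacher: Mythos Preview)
Your proof is correct and follows essentially the same approach as the paper: the same recovery formula, the same forward implication, and the same reliance on finite presentation plus $\vec{a},\vec{b}\notin C$ to handle the converse. The paper's version is much terser---it simply asserts that $\lambda_\infty(k,\vec{c})\geq h$ forces the strict inequality $\lambda_\infty(k,\vec{c})>h$---whereas you supply the local-constancy argument explicitly and use it to treat the equality case directly rather than rule it out; this is a minor tactical difference, not a different route.
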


\begin{proof}
The rank invariant on the set $L$ is derived from the landscape as follows: 
$$\beta^{\vec{a},\vec{b}} = \max \left\{ k \in \mathbb{N} \ \bigg| \ \lambda_\infty(k,\frac{\vec{a}+\vec{b}}{2}) \geq \left\| \frac{\vec{b}-\vec{a}}{2}\right\|_\infty \right\}$$

For if $\beta^{\vec{a},\vec{b}} \geq k$ then all morphisms starting and ending in the hypercube $[\vec{a},\vec{b}]$ have rank at least $k$. Thus for the centre of the hypercube $\vec{c} = \frac{\vec{a}+\vec{b}}{2}$ we see $\lambda_\infty(k,\vec{c}) \geq \|\frac{\vec{b}-\vec{a}}{2}\|_\infty$. Conversely if $\lambda_\infty(k,\vec{c}) \geq \|\frac{\vec{b}-\vec{a}}{2}\|_\infty$, since $\vec{a},\vec{b} \notin C$ and $M$ finitely presented, then in fact $\lambda_\infty(k,\vec{c}) > \|\frac{\vec{b}-\vec{a}}{2}\|_\infty$, and so $\beta^{\vec{a},\vec{b}} \geq k$.

\end{proof}

The following example illustrates two modules with distinct rank invariants which are not distinguished by their uniform persistence landscapes. 

\begin{ex}
  Denote by $[\vec{a},\vec{b}]$ the rectangular module with opposite vertices $\vec{a},\vec{b}$. Let $M$, $N$ be $2$-parameter persistence modules and rectangular decomposable, with $M =[(0,1), (10,2)] \oplus [(4,1), (6,2)]$ and let $N =[(0,1), (6,2)] \oplus [(4,1), (10,2)]$. These two modules have different rank invariant but the same Uniform Persistence Landscapes illustrated in Figure \ref{LandscapeVsRankInvariant}.
\end{ex}

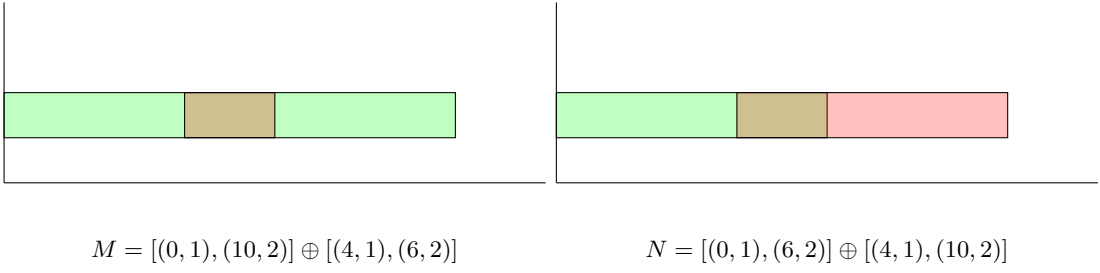
\begin{figure}
    \centering
    \begin{tikzpicture}[scale = 1.2]
    
    \draw (0,0) -- (6,0);
    \draw (0,0) -- (0,2);
    
    \filldraw[fill=green, fill opacity = 0.25] (0,.5) rectangle (5,1);
    \filldraw[fill=red, fill opacity = 0.25] (2,.5) rectangle (3,1);
    \node [anchor=south] (I) at (3,-1) {\footnotesize{$M =[(0,1), (10,2)] \oplus [(4,1), (6,2)]$}};
    \end{tikzpicture}
    \begin{tikzpicture}[scale = 1.2]

    \draw (0,0) -- (6,0);
    \draw (0,0) -- (0,2);
    
    \filldraw[fill=green, fill opacity = 0.25] (0,.5) rectangle (3,1);
    \filldraw[fill=red, fill opacity = 0.25] (2,.5) rectangle (5,1);
    \node [anchor=south] (I) at (3,-1) {\footnotesize{$N =[(0,1), (6,2)] \oplus [(4,1), (10,2)]$}};
    \end{tikzpicture}
    
    \caption{We illustrate modules that have different rank invariant but the same Uniform Persistence Landscapes. (The first summand of each module is shaded green and the second summand red.)}
\label{LandscapeVsRankInvariant}
\end{figure}

One would hope that a multiparameter module invariant could distinguish the modules in the previous example. The uniform persistence landscape fails to distinguish these modules since the rank invariants of these modules coincide on all pairs of points spanning hypercubes. This in turn occurs since the overlap between the summands in the $x_1$-coordinate is greater than their significance in the $x_2$-coordinate. Thus altering $x_1,x_2$ at the same rate we cannot detect the interaction between the summands with the uniform landscape. A simple reparametrisation scaling parameters $x_i$ appropriately would allow us to distinguish these modules and motivates the following definition.

\begin{defn}($\vec{w}$-Weighted Persistence Landscape)
Let $\vec{w}\in \{\vec{u} \in \mathbb{R}^n | u_i>0 , \|\vec{u}\|_\infty = 1 \}$ be a weighting vector corresponding to a rescaling of the parameter space $\mathbb{R}^n$. Define the $\vec{w}$-weighted infinity norm to be $\| \vec{h} \|^\vec{w}_{\infty} = \| (w_ih_i)_i \|_{\infty}$. The $\vec{w}$-Weighted Persistence Landscape is a function $\lambda_{\vec{w}} : \mathbb{N} \times \mathbb{R}^n \rightarrow \overline{\mathbb{R}}$.
$$ \lambda_{\vec{w}}(k,\vec{x}) = \sup\{ \varepsilon \geq 0 : \beta^{\vec{x}-\vec{h},\vec{x}+\vec{h}} \geq k \text{ for all } \vec{h} \geq \vec{0} \text{ with } \| \vec{h} \|^\vec{w}_{\infty} \leq \varepsilon\}$$
\end{defn}

\begin{Remark}
The Uniform Persistence $\infty$-Landscape is the $\vec{1}$-Weighted Persistence Landscape.
\end{Remark}

\begin{defn}($\vec{w}$-Rescaling)
Let $\varphi_\vec{w} \in \text{Aut}(\mathbb{R}^n)$ denote the invertible rescaling $\varphi_\vec{w}(\vec{x})=(w_ix_i)_i$ for $\vec{w}\in \{\vec{u} \in \mathbb{R}^n | u_i>0 , \|\vec{u}\|_\infty = 1 \}$.
\end{defn}

The following proposition makes precise the relationship between the weighted landscape and rescaling the parameter space.

\begin{prop}
\label{Rescaling}
Let $\vec{w}$ be a rescaling vector and let $\lambda_{\vec{w}}$ denote the function taking a module to its $\vec{w}$-Weighted Persistence Landscape. Let $(\varphi_\vec{w})^\ast$ denote the pull back of $\varphi_\vec{w}$.  Then the following diagram commutes:\\
\[
\begin{tikzcd}
\vec{vect}^{\vec{R}^n} \arrow{rr}{(\varphi_\vec{w})^\ast} \arrow{d}{\lambda_\vec{1}} & & \vec{vect}^{\vec{R}^n} \arrow{d}{\lambda_\vec{w}}\\
L_p(\mathbb{N}\times \mathbb{R}^n)  \arrow{rr}{(\id \times \varphi_\vec{w})^\ast} & & L_p(\mathbb{N}\times \mathbb{R}^n)
\end{tikzcd}
\]
\end{prop}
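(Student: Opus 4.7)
The plan is a direct unwinding of definitions followed by a change of variables under $\varphi_{\vec w}$. I would fix $M \in \vec{vect}^{\vec R^n}$, $k \in \mathbb{N}$ and $\vec x \in \mathbb{R}^n$, and show the pointwise identity
\[
 \lambda_{\vec w}\bigl((\varphi_{\vec w})^{\ast} M\bigr)(k,\vec x) \;=\; \lambda_{\vec 1}(M)\bigl(k,\varphi_{\vec w}(\vec x)\bigr),
\]
which is exactly the commutativity of the square (the right and bottom compositions both equal the right-hand side, while the left and top compositions both equal the left-hand side).

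First I would record how the rank invariant transforms under pullback. Since $((\varphi_{\vec w})^{\ast} M)_{\vec a} = M_{\varphi_{\vec w}(\vec a)}$ and $\varphi_{\vec w}$ is an order-isomorphism (each $w_i>0$), the internal morphisms of $(\varphi_{\vec w})^{\ast} M$ from $\vec a$ to $\vec b$ are the internal morphisms of $M$ from $\varphi_{\vec w}(\vec a)$ to $\varphi_{\vec w}(\vec b)$, so
\[
 \beta^{\vec a,\vec b}_{(\varphi_{\vec w})^{\ast} M} \;=\; \beta^{\varphi_{\vec w}(\vec a),\,\varphi_{\vec w}(\vec b)}_{M}.
\]
Applied to $\vec a = \vec x-\vec h$ and $\vec b = \vec x+\vec h$, this gives
\[
 \beta^{\vec x-\vec h,\,\vec x+\vec h}_{(\varphi_{\vec w})^{\ast} M} \;=\; \beta^{\varphi_{\vec w}(\vec x)-\varphi_{\vec w}(\vec h),\,\varphi_{\vec w}(\vec x)+\varphi_{\vec w}(\vec h)}_{M},
\]
where I have used that $\varphi_{\vec w}$ is $\mathbb{R}$-linear.

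Next I would perform the change of variables $\vec h' = \varphi_{\vec w}(\vec h)$ inside the supremum defining $\lambda_{\vec w}$. Two facts are needed and both are immediate: (i) $\vec h \geq \vec 0 \iff \vec h' \geq \vec 0$ because each $w_i>0$; and (ii) $\|\vec h\|^{\vec w}_\infty = \max_i w_i h_i = \max_i h'_i = \|\vec h'\|_\infty$. Substituting into the definition of $\lambda_{\vec w}$ and using the rank identity above,
\[
 \lambda_{\vec w}\bigl((\varphi_{\vec w})^{\ast} M\bigr)(k,\vec x) \;=\; \sup\Bigl\{\varepsilon\geq 0 \;\Big|\; \beta^{\varphi_{\vec w}(\vec x)-\vec h',\,\varphi_{\vec w}(\vec x)+\vec h'}_{M} \geq k \ \forall\, \vec h'\geq \vec 0,\ \|\vec h'\|_\infty \leq \varepsilon \Bigr\},
\]
which is exactly $\lambda_{\vec 1}(M)(k,\varphi_{\vec w}(\vec x)) = \bigl((\id\times \varphi_{\vec w})^{\ast}\lambda_{\vec 1}(M)\bigr)(k,\vec x)$.

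There is no real obstacle here: the entire argument rests on $\varphi_{\vec w}$ being a positive diagonal automorphism, which preserves the partial order, is linear (so commutes with $\vec x \pm \vec h$), and converts the $\|\cdot\|_\infty$-norm ball in the source into the $\|\cdot\|^{\vec w}_\infty$-ball in the target. The only point worth being careful about is checking that the set of allowable displacements $\vec h$ is carried bijectively onto the set of allowable displacements $\vec h'$; once that bijection is in place, the supremum defining the two landscape values matches term-by-term.
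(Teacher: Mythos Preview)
Your proof is correct and follows essentially the same approach as the paper: a direct definition chase using the identity $\beta^{\vec a,\vec b}_{(\varphi_{\vec w})^\ast M}=\beta^{\varphi_{\vec w}(\vec a),\varphi_{\vec w}(\vec b)}_{M}$ followed by the change of variables $\vec h'=\varphi_{\vec w}(\vec h)$ (the paper writes this as $\vec t=\vec w\odot\vec h$). You are slightly more explicit than the paper about why the order, linearity, and norm conditions transfer under $\varphi_{\vec w}$, but the argument is the same.
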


\begin{proof}
Let $M \in \vec{vect}^{\vec{R}^n}$, it is a straight forward definition chase to see the diagram commutes. For convenience of notation let us use shorthand notation for component wise multiplication $\vec{a}\odot\vec{b} = (a_ib_i)_i$.
\begin{align*}
    \lambda_{\vec{w}}(M \circ \varphi_{\vec{w}})(k,\vec{x}) &= \sup \{\varepsilon \geq 0 \ |\  \beta^{\vec{x}-\vec{h},\vec{x}+\vec{h}}_{M \circ \varphi_{\vec{w}}} \geq k, \text{ for all } \vec{h} \geq \vec{0} \text{ with } \| \vec{h} \|^{\vec{w}}_{\infty} \leq \varepsilon \} \\
    &= \sup \{\varepsilon \geq 0 \ |\  \beta^{\vec{w}\odot(\vec{x}-\vec{h}),\vec{w}\odot(\vec{x}+\vec{h})}_{M} \geq k, \text{ for all } \vec{h} \geq \vec{0} \text{ with } \| \vec{w}\odot\vec{h} \|_{\infty} \leq \varepsilon \} \\
    &= \sup \{\varepsilon \geq 0 \ |\  \beta^{\vec{w}\odot\vec{x}-\vec{t},\vec{w}\odot\vec{x}+\vec{t}}_{M} \geq k, \text{ for all } \vec{t} \geq \vec{0} \text{ with } \| \vec{t} \|_{\infty} \leq \varepsilon \} \\
    &= \lambda_{\vec{1}}(M)(k,\vec{w}\odot\vec{x}) = ((\textrm{id} \times \varphi_\vec{w})^\ast \circ \lambda_{\vec{1}}(M))(k,\vec{x})
\end{align*}
\end{proof}

The example illustrated in Figure \ref{LandscapeVsRankInvariant} exhibits the dependence on the relative scaling of the significant parameter values in a multiparameter persistence module and highlights the importance of normalisation or choice of weighting vectors $\vec{w}$ in practical applications.

\section{Stability and Injectivity}
\label{Stability}

In this section we shall show that the multiparameter landscapes are stable with respect to the interleaving distance and persistence weighted Wasserstein distance. We will then provide an injectivity result that shows the collection of weighted persistence landscapes derived from a persistence module contains almost all the information in the rank invariant of that module.

Let us define a $q$-distance on the space of multiparameter landscapes completely analogously with the definition as in \cite{Bubenik:2015} where we implicitly are viewing our landscapes as elements of $L_q(\mathbb{N}\times \mathbb{R}^n)$. Our landscapes are all measurable since they are continuous, however they may be unbounded. We can either choose to permit infinite distances or alternatively truncate our landscapes to a bounded region in order to that all our distances are finite.
\begin{defn}($q$-Landscape Distance)

Let $M, M'$ be persistence modules then define the $q$-landscape distance to be:
$$ d_{\lambda_p}^{(q)}(M,M') = \| \lambda_{p}(M) - \lambda_{p}(M') \|_{q} $$
\end{defn}

\subsection{Stability}




We now show that unlike the rank function and rank invariant, the infinity norm of multiparameter persistence landscapes is stable with respect to the interleaving distance. 

\begin{thm}(Multiparameter Uniform $\infty$-Landscape Stability)
\label{thm:stability}
Let $M , M' \in \textbf{vect}^{\textbf{R}^n}$ be multiparameter persistence modules and $\omega$ the sublinear projection given by $\omega_{\Gamma} = \| \Gamma - I \|_{\infty}$, then the $\infty$-Landscape distance of the Uniform Persistence $\infty$-Landscapes is bounded by the induced interleaving distance.
$$  d_{\lambda_\infty}^{(\infty)}(M,M') \leq d^{\omega}(M,M') $$

\end{thm}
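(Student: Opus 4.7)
The plan is to reduce to the standard single-parameter style of stability argument by absorbing the generality of the translations $\Gamma,K$ into a clean ``shift by $\varepsilon\vec{1}$'' picture, and then to use the fact that any witness $\vec{h}'$ for $\lambda_\infty(M')(k,\vec{x})\geq r-\varepsilon$ lifts to a witness $\vec{h}=\vec{h}'+\varepsilon\vec{1}$ for $\lambda_\infty(M)(k,\vec{x})\geq r$.

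First I would unwind the interleaving data. Suppose $M,M'$ are $\varepsilon$-interleaved via $\Gamma,K\in\textbf{Trans}_{\mathbb{R}^n}$ with $\|\Gamma-\mathrm{id}\|_\infty,\|K-\mathrm{id}\|_\infty\leq\varepsilon$; since $\Gamma,K$ are increasing translations this means $\vec{a}\leq\Gamma(\vec{a})\leq\vec{a}+\varepsilon\vec{1}$ and likewise for $K$. Composing the given $\varphi,\psi$ with the appropriate internal morphisms of $M'$ and $M$ yields natural transformations
\[
\tilde{\varphi}_{\vec{a}}:M_{\vec{a}}\to M'_{\vec{a}+\varepsilon\vec{1}},\qquad \tilde{\psi}_{\vec{a}}:M'_{\vec{a}}\to M_{\vec{a}+\varepsilon\vec{1}}.
\]
A short diagram chase using naturality of $\psi$ and the interleaving coherence $(\psi\Gamma)\varphi=M\eta^{\mathrm{id}}_{K\Gamma}$ shows that the composite $\tilde{\psi}_{\vec{a}+\varepsilon\vec{1}}\circ\tilde{\varphi}_{\vec{a}}$ factors $M(\vec{a}\leq\vec{a}+2\varepsilon\vec{1})$ by postcomposition with $M(K\Gamma(\vec{a})\leq\vec{a}+2\varepsilon\vec{1})$, and symmetrically for $\tilde{\varphi}\circ\tilde{\psi}$.

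Next I would extract the rank inequality. For any $\vec{a}\leq\vec{b}$ with $\vec{b}\geq\vec{a}+2\varepsilon\vec{1}$, naturality of $\tilde{\psi}$ together with the previous step gives the factorisation
\[
M(\vec{a}\leq\vec{b})=\tilde{\psi}_{\vec{b}-\varepsilon\vec{1}}\circ M'(\vec{a}+\varepsilon\vec{1}\leq\vec{b}-\varepsilon\vec{1})\circ\tilde{\varphi}_{\vec{a}},
\]
so the middle map has rank at least that of the composite, yielding
\[
\beta_M^{\vec{a},\vec{b}}\leq\beta_{M'}^{\vec{a}+\varepsilon\vec{1},\,\vec{b}-\varepsilon\vec{1}}.
\]

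Finally I would deduce the landscape bound. Fix $(k,\vec{x})$ and set $r=\lambda_\infty(M)(k,\vec{x})$; assume $r>\varepsilon$, since otherwise $\lambda_\infty(M')(k,\vec{x})\geq 0\geq r-\varepsilon$. For any $\delta<r-\varepsilon$ and any $\vec{h}'\geq\vec{0}$ with $\|\vec{h}'\|_\infty\leq\delta$, put $\vec{h}=\vec{h}'+\varepsilon\vec{1}\geq\vec{0}$; then $\|\vec{h}\|_\infty\leq\delta+\varepsilon<r$ so $\beta_M^{\vec{x}-\vec{h},\vec{x}+\vec{h}}\geq k$. Applying the rank inequality with $\vec{a}=\vec{x}-\vec{h}$, $\vec{b}=\vec{x}+\vec{h}$ (which satisfies $\vec{b}-\vec{a}=2\vec{h}\geq 2\varepsilon\vec{1}$) gives $\beta_{M'}^{\vec{x}-\vec{h}',\vec{x}+\vec{h}'}\geq k$. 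Hence $\lambda_\infty(M')(k,\vec{x})\geq\delta$, and letting $\delta\uparrow r-\varepsilon$ yields $\lambda_\infty(M')(k,\vec{x})\geq\lambda_\infty(M)(k,\vec{x})-\varepsilon$. By symmetry the opposite inequality holds, so $\|\lambda_\infty(M)-\lambda_\infty(M')\|_\infty\leq\varepsilon$; taking the infimum over all $\varepsilon$-interleavings gives the theorem.

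The only real subtlety is the passage from generalised translations to a uniform coordinatewise shift by $\varepsilon\vec{1}$: the translations $\Gamma,K$ need not be constant shifts, so one must verify that the composed maps $\tilde{\varphi},\tilde{\psi}$ are genuinely natural and satisfy the correct coherence, which is where the $\|\cdot\|_\infty$ choice of sublinear projection and the supremum form of the uniform $\infty$-landscape cooperate perfectly.
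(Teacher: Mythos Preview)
Your proof is correct and follows essentially the same approach as the paper's. The paper compresses your three steps into a single commutative diagram that factors $M(\vec{x}-(\vec{h}+\varepsilon\vec{1})\leq\vec{x}+(\vec{h}+\varepsilon\vec{1}))$ through $M'(\vec{x}-\vec{h}\leq\vec{x}+\vec{h})$ directly via $\varphi$ and $\psi$; your explicit reduction to constant shifts $\tilde{\varphi},\tilde{\psi}$ and the intermediate rank inequality $\beta_M^{\vec{a},\vec{b}}\leq\beta_{M'}^{\vec{a}+\varepsilon\vec{1},\vec{b}-\varepsilon\vec{1}}$ unpack precisely the arrows $M'(\Gamma(\vec{x}-(\vec{h}+\varepsilon\vec{1}))\leq\vec{x}-\vec{h})$ and $M(K(\vec{x}+\vec{h})\leq\vec{x}+(\vec{h}+\varepsilon\vec{1}))$ that appear in that diagram.
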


\begin{proof}

Suppose $M,M'$ are $\varepsilon$-interleaved with respect to $\omega$, and let $(\Gamma, K)$ realise an $\varepsilon$-interleaving. Let $\vec{x} \in \mathbb{R}^n$ and assume without loss of generality that $ r = \lambda_{\infty}(k,\vec{x}) \geq \lambda_{\infty}'(k,\vec{x})$ and that also $ \lambda_{\infty}(k,\vec{x}) \geq \varepsilon $.

For any $\vec{h} \in B^{\geq}_\infty(\vec{0},r-\varepsilon) $ we have that $\vec{h} + \varepsilon \vec{1} \in B^{\geq}_\infty(\vec{0},r)$. Since $r = \lambda_{\infty}(k,\vec{x})$ we know that the map $M(\vec{x} - (\vec{h} + \varepsilon \vec{1}) \leq \vec{x} + (\vec{h} + \varepsilon \vec{1}))$ has rank at least $k$.

The $(\Gamma, K)$ $\varepsilon$-interleaving gives rise to commutative diagram:

\vspace{5mm}

\begin{tikzcd}[cramped, column sep=tiny, font=\footnotesize]
M(\vec{x} - (\vec{h} + \varepsilon \vec{1}))
\arrow[ddr]
\arrow[rrrr] & & & & M(K(\vec{x} + \vec{h})) \arrow[r] & M(\vec{x} + (\vec{h} + \varepsilon \vec{1})) \\
&&&&&\\
&  M'(\Gamma(\vec{x} - (\vec{h} + \varepsilon \vec{1}))) \arrow[r] & M'(\vec{x} - \vec{h}) \arrow[r] & M'(\vec{x} + \vec{h}) \arrow[uur] & &
\end{tikzcd}

\vspace{5mm}

Thus we see that the map $M'(\vec{x} - \vec{h} \leq \vec{x} + \vec{h})$ has rank at least $k$.
\end{proof}

\begin{cor}(Multiparameter Sublevel Set $\infty$-Landscape Stability Theorem)
Let $ f,g : X \rightarrow \mathbb{R}^n $ then the sublevel set persistence modules satisfy:
$$ d_{\lambda_\infty}^{(\infty)}(M(f),M(g)) \leq \|f-g\|_{\infty} $$
\end{cor}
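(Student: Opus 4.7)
The plan is to reduce the corollary to Theorem \ref{thm:stability} by exhibiting an explicit interleaving between the sublevel set modules $M(f)$ and $M(g)$ with weight $\varepsilon = \|f-g\|_\infty$. Once that interleaving is constructed, the bound on the $\infty$-landscape distance is immediate.

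First, I would set $\varepsilon = \|f-g\|_\infty$ and let $\Gamma = K$ be the translation on $\vec{R}^n$ given by $\vec{a} \mapsto \vec{a} + \varepsilon \vec{1}$, which is increasing and satisfies $\omega_\Gamma = \omega_K = \varepsilon$ for the sublinear projection $\omega_\Gamma = \|\Gamma - \id\|_\infty$. The key observation is that for every $x \in X$ and every coordinate $i$,
\[
g(x)_i \leq f(x)_i + \varepsilon \quad \text{and} \quad f(x)_i \leq g(x)_i + \varepsilon.
\]
Consequently, for each $\vec{a} \in \mathbb{R}^n$, if $f(x)_i < a_i$ for all $i$ then $g(x)_i < a_i + \varepsilon$ for all $i$, giving the inclusions of sublevel sets
\[
X^f_{\vec{a}} \hookrightarrow X^g_{\vec{a} + \varepsilon \vec{1}} \quad \text{and} \quad X^g_{\vec{a}} \hookrightarrow X^f_{\vec{a} + \varepsilon \vec{1}}.
\]

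Next, applying the homology functor to these inclusions produces natural transformations $\varphi: M(f) \Rightarrow M(g)\Gamma$ and $\psi: M(g) \Rightarrow M(f) K$. The coherence axioms for an interleaving reduce to checking that the composite inclusions $X^f_{\vec{a}} \hookrightarrow X^g_{\vec{a}+\varepsilon \vec{1}} \hookrightarrow X^f_{\vec{a}+2\varepsilon \vec{1}}$ and $X^g_{\vec{a}} \hookrightarrow X^f_{\vec{a}+\varepsilon \vec{1}} \hookrightarrow X^g_{\vec{a}+2\varepsilon \vec{1}}$ agree with the direct inclusions $X^f_{\vec{a}} \hookrightarrow X^f_{\vec{a}+2\varepsilon \vec{1}}$ and $X^g_{\vec{a}} \hookrightarrow X^g_{\vec{a}+2\varepsilon \vec{1}}$ respectively; this is true at the level of topological spaces and hence passes to homology. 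Therefore $M(f)$ and $M(g)$ are $(\Gamma, K)$-interleaved, so they are $\varepsilon$-interleaved with respect to $\omega$, giving $d^\omega(M(f), M(g)) \leq \varepsilon$.

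Finally, applying Theorem \ref{thm:stability} yields
\[
d_{\lambda_\infty}^{(\infty)}(M(f), M(g)) \leq d^\omega(M(f), M(g)) \leq \|f-g\|_\infty.
\]
There is no real obstacle here; the only mild subtlety is verifying that the strict-inequality convention $f(x)_i < a_i$ used in the definition of the sublevel set filtration still produces valid inclusions after shifting by $\varepsilon$, which is handled by the bound $g(x)_i \leq f(x)_i + \varepsilon < a_i + \varepsilon$.
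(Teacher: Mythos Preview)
Your proof is correct and follows exactly the same approach as the paper: the paper's proof is the single line ``$M(f),M(g)$ are $\|f-g\|_\infty$ interleaved with respect to $\omega_\Gamma = \|\Gamma - I\|_\infty$'', and you have simply spelled out the details of that interleaving at the level of sublevel sets before invoking Theorem~\ref{thm:stability}.
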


\begin{proof}
$M(f),M(g)$ are $\| f-g\|_{\infty}$ interleaved with respect to $\omega_{\Gamma} = \|\Gamma - I \|_{\infty}$
\end{proof}

We observed in Remark \ref{remark:qrelated} that the multiparameter $p$-landscapes are related to the $\infty$-landscapes by constant factors. Thus the $\infty$-landscape distance between a pair of $p$-landscapes is bounded by some constant multiple of the interleaving distance. Note also that in the situation where we truncate our landscapes to a bounded region $R \subset \mathbb{R}^n$, the $\infty$-landscape distance stability result yields a coarse bound for the $p$-landscape distance: the product of the measure of the bounded region $|R|$ and the $\infty$-landscape distance. 
Hence working over a bounded region $R$ the $q$-landscape distance between two $p$-landscapes is bounded by a constant factor of the interleaving distance for any choice of $p$ and $q$, with the constant dependent on $p,q, |R|$.

The weighted landscapes also satisfy stability with respect to the interleaving distance. This can be shown directly or using Proposition \ref{Rescaling} and the stability result in the uniform case.

\begin{cor}(Multiparameter $\vec{w}$-Weighted $\infty$-Landscape Stability)
Let $M,N$ be multiparameter modules and let $\omega, \omega^{\vec{w}}$ be the sublinear projections given by $\omega_{\Gamma} = \| \Gamma - \id \|_{\infty},\ \omega^{\vec{w}}_{\Gamma} = \|\Gamma - \id \|_{\infty}^{\vec{w}}$ respectively. Then the induced interleaving distances bound the $\infty$-landscape distance:

$$ \| \lambda_\vec{w}(M) - \lambda_\vec{w}(N)\|_\infty  \leq d^{\omega^\vec{w}}(M,N) \leq d^{\omega}(M,N)$$

\end{cor}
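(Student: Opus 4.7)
The plan is to handle the two inequalities separately, deriving the right hand one from a direct comparison of sublinear projections and the left hand one by a change of coordinates that reduces the claim to Theorem \ref{thm:stability}.

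For the right hand inequality, observe that since $w_i \in (0,1]$ (the constraint $\|\vec{w}\|_\infty = 1$ forces $\max_i w_i = 1$), we have for every translation $\Gamma$
$$ \omega^{\vec{w}}_{\Gamma} = \max_i w_i\,|\Gamma(\cdot)_i - (\cdot)_i| \ \leq\ \|\Gamma - \id\|_\infty = \omega_\Gamma. $$
Hence any $\varepsilon$-translation with respect to $\omega$ is automatically an $\varepsilon$-translation with respect to $\omega^\vec{w}$, so any pair of modules that is $\varepsilon$-interleaved with respect to $\omega$ is $\varepsilon$-interleaved with respect to $\omega^\vec{w}$. Taking infima yields $d^{\omega^\vec{w}}(M,N) \leq d^\omega(M,N)$.

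For the left hand inequality I would push the weights into the parameter space. Let $M' = M \circ \varphi_{\vec{w}}^{-1}$ and $N' = N \circ \varphi_{\vec{w}}^{-1}$; note $\varphi_{\vec{w}}^{-1}$ exists because $w_i > 0$. By Proposition \ref{Rescaling} applied with the rescaling vector $\vec{w}$,
$$ \lambda_{\vec{w}}(M) = \lambda_{\vec{1}}(M') \circ (\id \times \varphi_{\vec{w}}), \qquad \lambda_{\vec{w}}(N) = \lambda_{\vec{1}}(N') \circ (\id \times \varphi_{\vec{w}}). $$
Because $\varphi_{\vec{w}}$ is a bijection of $\mathbb{R}^n$, composing with $\id \times \varphi_{\vec{w}}$ preserves the $L^\infty$ norm, and so
$$ \|\lambda_{\vec{w}}(M) - \lambda_{\vec{w}}(N)\|_\infty \ =\ \|\lambda_{\vec{1}}(M') - \lambda_{\vec{1}}(N')\|_\infty. $$
Theorem \ref{thm:stability} applied to $M'$ and $N'$ bounds the right hand side by $d^\omega(M',N')$, so it remains to show $d^\omega(M',N') \leq d^{\omega^{\vec{w}}}(M,N)$.

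Given any $\varepsilon$-interleaving $(\Gamma, K)$ of $M$ and $N$ with respect to $\omega^{\vec{w}}$, the conjugated translations $\tilde{\Gamma} = \varphi_{\vec{w}} \Gamma \varphi_{\vec{w}}^{-1}$ and $\tilde{K} = \varphi_{\vec{w}} K \varphi_{\vec{w}}^{-1}$ are increasing translations of $\vec{R}^n$, and precomposing the original natural transformations $\varphi : M \Rightarrow N\Gamma$ and $\psi : N \Rightarrow MK$ with $\varphi_{\vec{w}}^{-1}$ yields natural transformations $M' \Rightarrow N' \tilde{\Gamma}$ and $N' \Rightarrow M' \tilde{K}$ (using $\tilde{\Gamma}\varphi_{\vec{w}}^{-1} = \varphi_{\vec{w}}^{-1}\Gamma$ on objects) that satisfy the coherence identities because $\varphi$ and $\psi$ did. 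A direct computation using the linearity of $\varphi_{\vec{w}}$ gives
$$ \tilde{\Gamma}(\vec{x}) - \vec{x} = \varphi_{\vec{w}}\bigl(\Gamma(\varphi_{\vec{w}}^{-1}(\vec{x})) - \varphi_{\vec{w}}^{-1}(\vec{x})\bigr), $$
so $\|\tilde{\Gamma} - \id\|_\infty = \|\Gamma - \id\|^{\vec{w}}_\infty \leq \varepsilon$, and analogously for $\tilde{K}$. Thus $(M',N')$ are $\varepsilon$-interleaved with respect to $\omega$, and taking infima produces the required bound. The only delicate step is the bookkeeping in this last paragraph, where one must confirm that conjugation by a rescaling genuinely transports the interleaving diagram; everything else is an application of results already established in the excerpt.
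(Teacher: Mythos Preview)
Your proof is correct and follows precisely one of the two routes the paper itself suggests: the paper does not spell out a proof for this corollary but remarks that it ``can be shown directly or using Proposition~\ref{Rescaling} and the stability result in the uniform case,'' and your argument is a careful execution of the latter option. The comparison $\omega^{\vec w}_\Gamma \le \omega_\Gamma$ for the right-hand inequality and the conjugation-of-interleavings step transporting $d^{\omega^{\vec w}}(M,N)$ to $d^{\omega}(M',N')$ are exactly the bookkeeping the paper leaves implicit.
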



The $q$-landscape distance restricted to interval decomposable modules is stable with respect to the persistence weighted $q$-Wasserstein distance.

\begin{prop}($q$-Landscape Distance Stability of Interval Decomposable Modules)
Let $M,N$ be interval decomposable multiparameter modules with finite barcodes, and recall $d_{\overline{W}_q}$ the persistence weighted $q$-Wasserstein distance. The $q$-landscape distance is stable with respect to the persistence weighted $q$-Wasserstein distance:
$$ d_{\lambda_\infty}^{(q)}(M,N) \leq d_{\overline{W}_q}(M,N)$$
\end{prop}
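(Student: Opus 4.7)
The plan is to fix any matching $\sigma : \mathcal{J} \to \mathcal{K}$ (after appending empty intervals so $|\mathcal{J}| = |\mathcal{K}|$) and establish, for every such $\sigma$,
$$d_{\lambda_\infty}^{(q)}(M,N)^q \;\leq\; \sum_{j \in \mathcal{J}} |I_j \cup J_{\sigma(j)}|\,\varepsilon_j^q;$$
taking $q$-th roots and passing to the infimum over $\sigma$ then yields the claim. The key input is the previously established identity $\lambda_\infty(M)(k,\vec{x}) = \mathrm{kmax}_j \,\lambda_\infty(\mathds{1}^{I_j})(1,\vec{x})$ for interval decomposable $M$, together with the analogue for $N$, which since $\sigma$ is a bijection and $\mathrm{kmax}$ is symmetric in its inputs can be rewritten as $\lambda_\infty(N)(k,\vec{x}) = \mathrm{kmax}_j \,\lambda_\infty(\mathds{1}^{J_{\sigma(j)}})(1,\vec{x})$.

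Writing $a_j(\vec{x}) = \lambda_\infty(\mathds{1}^{I_j})(1,\vec{x})$ and $b_j(\vec{x}) = \lambda_\infty(\mathds{1}^{J_{\sigma(j)}})(1,\vec{x})$, the first move is a pointwise Hardy--Littlewood--P\'olya rearrangement inequality: since $t \mapsto |t|^q$ is convex for $q \geq 1$, the sorted pairing of two equal-length real sequences minimises $\sum_i |a_i - b_{\pi(i)}|^q$ over permutations $\pi$, giving
$$\sum_k |\lambda_\infty(M)(k,\vec{x}) - \lambda_\infty(N)(k,\vec{x})|^q \;\leq\; \sum_j |a_j(\vec{x}) - b_j(\vec{x})|^q.$$
Integrating over $\vec{x}$ and exchanging the (finite) sum with the integral by Tonelli converts this into $d_{\lambda_\infty}^{(q)}(M,N)^q \leq \sum_j \|a_j - b_j\|_q^q$.

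The second move bounds each of these $L^q$-integrals by $|I_j \cup J_{\sigma(j)}|\,\varepsilon_j^q$. The function $a_j$ is supported in $I_j$ (outside $I_j$ no positive $\infty$-ball fits inside $I_j$, so $a_j$ vanishes) and $b_j$ is supported in $J_{\sigma(j)}$, so the integrand is supported in $I_j \cup J_{\sigma(j)}$; Theorem \ref{thm:stability} applied to the single pair $\mathds{1}^{I_j}, \mathds{1}^{J_{\sigma(j)}}$ gives $\|a_j - b_j\|_\infty \leq d^{\omega}(\mathds{1}^{I_j}, \mathds{1}^{J_{\sigma(j)}}) = \varepsilon_j$, and the trivial bound $\int_A |f|^q \leq |A|\cdot \|f\|_\infty^q$ closes the step. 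Empty intervals appended to equalise cardinalities require no separate treatment since $\lambda_\infty(\mathds{1}^\emptyset)\equiv 0$ and $|I\cup\emptyset|=|I|$, so unmatched summands contribute exactly the corresponding term in the definition of $d_{\overline{W}_q}$.

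The main obstacle is the pointwise sorted-pairing step: one must know that for convex cost functions the monotone rearrangement minimises the pairing cost, which is the classical Hardy--Littlewood--P\'olya result and can alternatively be proved by an exchange argument on transpositions together with convexity of $|\cdot|^q$. Beyond this, the argument is a straightforward assembly of the interval decomposition formula for the uniform $\infty$-landscape, the $\infty$-landscape stability theorem applied summand-wise, and the elementary $L^q$-norm estimate by support measure and sup-norm.
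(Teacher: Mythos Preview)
Your proposal is correct and follows essentially the same approach as the paper: both use the $\mathrm{kmax}$ decomposition of the $\infty$-landscape for interval decomposable modules, apply the sorted-pairing (rearrangement) inequality pointwise, and then bound each summand via the support observation together with Theorem~\ref{thm:stability}. The only cosmetic difference is that the paper fixes an optimal matching $\sigma$ at the outset rather than bounding by an arbitrary $\sigma$ and taking the infimum, but the argument is otherwise identical.
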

\begin{proof}
Let us use the shorthand notation $\lambda_M = \lambda_\infty(M)$, and suppose $M,N$ have barcodes $\{I_j\ \ | \ j \in \mathcal{J}\}$ and $\{J_\kappa \ \ | \ \kappa \in \mathcal{K}\}$ with equal cardinality (else append empty intervals). Recall that the landscape for $M$ can be expressed as a pointwise maximum, $\lambda_M(k,\vec{x}) = \textrm{kmax}_\mathcal{J} \ \lambda_{\mathds{1}^{I_j}}(1,\vec{x})$. Let $\sigma : \mathcal{J} \to \mathcal{K}$ be any bijection realising the persistence weighted $q$-Wasserstein distance.
    \begin{align*}
         d_{\lambda_\infty}^{(q)}(M,N)^q = \|\lambda_M -\lambda_N \|_q^q &= \sum_{k=1}^\infty \int_{\mathbb{R}^n} |\lambda_M(k,\vec{x}) -\lambda_N(k,\vec{x}) |^q d\mu \\
         &=  \int_{\mathbb{R}^n} \sum_{k=1}^\infty |\textrm{kmax}_\mathcal{J} \ \lambda_{\mathds{1}^{I_j}}(1,\vec{x}) -\textrm{kmax}_\mathcal{K} \ \lambda_{\mathds{1}^{J_\kappa}}(1,\vec{x}) |^q d\mu \\
         &\leq \int_{\mathbb{R}^n} \sum_{j\in \mathcal{J}} | \lambda_{\mathds{1}^{I_j}}(1,\vec{x}) - \lambda_{\mathds{1}^{J_{\sigma(j)}}}(1,\vec{x}) |^q d\mu \\
         &\leq \int_{\mathbb{R}^n} \sum_{j\in \mathcal{J}} \varepsilon_j^q \mathds{1}_{\{I_j \cup J_{\sigma(j)}\}} d\mu \\
         &= \sum_{j\in \mathcal{J}} |I_j \cup J_{\sigma(j)}|\varepsilon_j^q = d_{\overline{W}_q}(M,N)^q
    \end{align*}
    
The inequality between the second and third line follows from the general fact that for any $\vec{u},\vec{v} \in \mathbb{R}^n$ the sum $\sum |u_i - v_i|^q$ is minimised by ordering the components of each tuple. The fourth line bounds the third line by Theorem \ref{thm:stability} applied to the matched interval summands.
\end{proof}

\subsection{Injectivity}

We now show that the collection of weighted landscapes associated to a module preserves almost all the information contained in the rank invariant. 

\begin{prop}
Let $\vec{vect}_{\text{fin}}^{\vec{R}^n}$ denote the collection of pointwise finite dimensional, finitely presented persistence modules. Let us define an equivalence relation on $\vec{vect}_{\text{fin}}^{\vec{R}^n}$ identifying $M \sim N$ if the rank invariant of $M$ and $N$ coincide almost everywhere. Then the map $\lambda : M \mapsto \{(\vec{w}, \lambda_\vec{w}(M))\}$ is well defined and injective on the quotient space $\vec{vect}_{\text{fin}}^{\vec{R}^n}/\sim$. Moreover, equipping the quotient space with the distance induced by the interleaving distance, and the weighted landscape space with the metric: $$d(\lambda(M),\lambda(N)) = \sup_\vec{w}\{ \| \lambda_\vec{w}(M) - \lambda_\vec{w}(N)\|_\infty\}$$ then this map is $1$-Lipschitz.
\end{prop}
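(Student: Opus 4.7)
The plan is to verify the three claims in turn---well-definedness, injectivity, and the $1$-Lipschitz bound---leaning on Propositions \ref{Hypercube} and \ref{Rescaling} together with the weighted stability corollary already established.

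The $1$-Lipschitz estimate is essentially immediate from the Multiparameter $\vec{w}$-Weighted $\infty$-Landscape Stability Corollary: the inequality $\|\lambda_\vec{w}(M) - \lambda_\vec{w}(N)\|_\infty \leq d^\omega(M,N)$ holds uniformly in $\vec{w}$, so taking the supremum over $\vec{w}$ preserves the bound.

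For injectivity, suppose $\lambda_\vec{w}(M) = \lambda_\vec{w}(N)$ for every admissible $\vec{w}$. The strategy is to recover the rank invariant on a co-null set of pairs. Given $\vec{a} < \vec{b}$ with $b_i - a_i > 0$ for every $i$, set $w_i = \alpha/(b_i - a_i)$ where $\alpha = \min_j(b_j - a_j)$, so that $\|\vec{w}\|_\infty = 1$ and $\vec{b}-\vec{a}$ spans a half-ball of the $\vec{w}$-weighted $\infty$-norm about $(\vec{a}+\vec{b})/2$. Proposition \ref{Rescaling} identifies $\lambda_\vec{w}$ of a module with the uniform $\infty$-landscape of its rescaling by $\varphi_\vec{w}^{-1}$, and such a rescaling preserves finite presentation. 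Applying Proposition \ref{Hypercube} to these rescaled modules yields
$$\beta_M^{\vec{a},\vec{b}} = \max\left\{k \in \mathbb{N} \,:\, \lambda_\vec{w}(M)\bigl(k, \tfrac{\vec{a}+\vec{b}}{2}\bigr) \geq \bigl\|\tfrac{\vec{b}-\vec{a}}{2}\bigr\|_\infty^{\vec{w}}\right\},$$
with the same identity for $N$, valid for $(\vec{a},\vec{b})$ off a null set determined by the supports of the Betti maps of both modules. Since the locus where some $b_i \leq a_i$ is also null in $\{\vec{a}\leq\vec{b}\}$, the rank invariants of $M$ and $N$ must agree almost everywhere, yielding $M \sim N$.

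For well-definedness, suppose instead the rank invariants of $M,N$ agree almost everywhere. Finite presentation forces each rank invariant to be piecewise constant on a finite polyhedral subdivision of $\{(\vec{a},\vec{b}) : \vec{a} \leq \vec{b}\}$ determined by the supports of $\xi_0, \xi_1$; two such step functions that agree off a null set must coincide on every chamber of the common refinement. Combined with the monotonicity that $\vec{h} \mapsto \beta^{\vec{x}-\vec{h},\vec{x}+\vec{h}}$ is non-increasing (longer maps factor through shorter ones), the supremum defining $\lambda_\vec{w}(k,\vec{x})$ is governed by interior-chamber values of the rank invariant rather than by boundary ones; together with the $1$-Lipschitz continuity of $\lambda_\vec{w}$ this forces $\lambda_\vec{w}(M) = \lambda_\vec{w}(N)$ pointwise for every $\vec{w}$. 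The main obstacle is making this last step rigorous: verifying that the landscape sup is truly insensitive to a null set of rank-invariant discrepancies will require carefully exploiting both the polyhedral structure forced by finite presentation and the monotonicity of the rank along diagonal rays.
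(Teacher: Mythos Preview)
Your injectivity and $1$-Lipschitz arguments match the paper's approach exactly: injectivity via rescaling to hypercubes and invoking Proposition~\ref{Hypercube}, and the Lipschitz bound directly from the weighted stability corollary.

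For well-definedness you take the direct route---assuming the rank invariants agree almost everywhere and trying to force the landscapes to coincide---and you rightly flag the last step as the obstacle. The paper avoids this entirely by proving the contrapositive: if $\lambda_{\vec{1}}(M)(k,\vec{x}) > \lambda_{\vec{1}}(N)(k,\vec{x})$ at some point, one exhibits an \emph{open} set of pairs $(\vec{a},\vec{b})$ on which the rank invariants differ. Concretely, with $r = \lambda_{\vec{1}}(M)(k,\vec{x})$ and $\varepsilon$ less than the landscape gap, pick $\vec{h}$ with $\|\vec{h}\|_\infty < r$ and $\beta_N^{\vec{x}-\vec{h},\vec{x}+\vec{h}} < k$; then every $(\vec{a},\vec{b}) \in B^{<}(\vec{x}-\vec{h},\varepsilon) \times B^{>}(\vec{x}+\vec{h},\varepsilon)$ satisfies $\beta_M^{\vec{a},\vec{b}} \geq k$ (since $\|\vec{h}\|_\infty + \varepsilon < r$) and $\beta_N^{\vec{a},\vec{b}} \leq \beta_N^{\vec{x}-\vec{h},\vec{x}+\vec{h}} < k$ (by monotonicity of rank under factoring). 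This gives positive measure immediately, with no appeal to the polyhedral chamber structure or to finite presentation. Your approach can be completed, but the contrapositive is both shorter and uses nothing beyond monotonicity of the rank invariant.
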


\begin{proof} $\newline$
    \begin{enumerate}

    \item  The map $\lambda$ is well defined:
    
    Suppose $M,N$ are modules such that $\lambda_\vec{1}(M) \neq \lambda_\vec{1}(N)$ we will show that the rank invariant of these modules differ on a set of positive measure. Without loss of generality let $r = \lambda_\vec{1}(M)(k,\vec{x}) > \lambda_\vec{1}(N)(k,\vec{x})$ and let $\varepsilon < \lambda_\vec{1}(M)(k,\vec{x}) - \lambda_\vec{1}(N)(k,\vec{x})$. Since $\lambda_\vec{1}(N)(k,\vec{x})<r$ there is some $\vec{h}$ with $\| \vec{h}\|_\infty<r$ such that $\beta_N^{\vec{x}-\vec{h},\vec{x}+\vec{h}} < k$.
    Consider any element $(\vec{a},\vec{b})$ of the open set $B^{<}(\vec{x}-\vec{h},\varepsilon)\times B^{>}(\vec{x}+\vec{h},\varepsilon) \subset \mathbb{R}^n \times  \mathbb{R}^n$. Then $r = \lambda_\vec{1}(M)(k,\vec{x}) \implies \beta_M^{\vec{a},\vec{b}} \geq k$ and $\vec{a} \leq \vec{x}-\vec{h}\leq \vec{x}+\vec{h}\leq \vec{b} \implies \beta_N^{\vec{a},\vec{b}} \leq \beta_N^{\vec{x}-\vec{h},\vec{x}+\vec{h}} < k$. Thus the rank invariants differ on a set of positive measure.
    
    \item  The map $\lambda$ is injective:
    
    Let $M \in \vec{vect}_{\text{fin}}^{\vec{R}^n}$ and recall Proposition \ref{Hypercube}. For all $\vec{a} < \vec{b}$ there is some rescaling vector $\vec{w}$ such that $[\varphi_\vec{w}(\vec{a}),\varphi_\vec{w}(\vec{b})]$ spans a hypercube. Thus we can recover the rank invariant of $M$ almost everywhere from the collection $\{(\vec{w}, \lambda_\vec{w}(M))\}$.
    
    \item  The map $\lambda$ is $1$-Lipschitz:
    
    This is an immediate consequence of Multiparameter $\vec{w}$-Weighted $\infty$-Landscape Stability.
    \end{enumerate}
\end{proof}

Since $\lambda$ is $1$-Lipschitz we can compute a lower bound on the interleaving distance between modules from the collection of weighted landscapes. We would be interested to investigate further the relationship between the landscape distance and the interleaving distance to understand when the landscape distance provides a good lower bound for the interleaving distance.





\section{Statistics on Multiparameter Landscapes}
\label{Statistics}

A principal advantage of working with landscapes as a summary statistic for our data is that we are always able to take the pointwise mean of a collection of landscapes. The space of persistence landscapes endowed with the $q$-landscape distance is naturally a subspace of Lebesgue space, a Banach space. We would like to perform statistical analysis on a set landscapes produced from data sets to distinguish significant topological signals from sampling noise.  
In this section we shall review relevant results from the theory of Banach Space valued random variables, and then apply these results to multiparameter persistence landscapes. We attain the same collection of results enjoyed by the single parameter persistence landscape established in \cite{Bubenik:2015}.

\subsection{Probability in Banach Spaces}

Let us begin by defining some notation. Let $(\mathcal{B},\| \cdot \|)$ denote a real, separable Banach Space with topological dual space $\mathcal{B}^{*}$. Let $ V : (\Omega,\mathcal{F},\mathbb{P}) \to \mathcal{B}$ denote a Borel measurable random variable. The covariance structure of such a random variable is defined to be the set of expectations $$\{\mathbb{E}[(f(V)-\mathbb{E}[f(V)])(g(V)-\mathbb{E}[g(V)])] : f,g \in \mathcal{B}^{*}\}$$

In order to take expected values of Banach valued random variables we require the notion of the Pettis Integral, which is an extension of the Lebesgue integral to functions on measure spaces taking values in normed spaces. We shall briefly introduce the properties of this integral and existence criteria, the essence of which is built upon reducing the problem to integrability of $\mathbb{R}$-valued functions.

\begin{defn}(Scalarly Integrable)\cite{Geitz1981}

A function $V: (\Omega, \mathcal{F}, \mu) \to \mathcal{B}$ is scalarly integrable if for all $f \in \mathcal{B}^{*}$ we have that $f(V)\in L_{1}(\mu)$ 

\end{defn}

\begin{defn}(Pettis Integrable)\cite{Geitz1981}

A scalarly integrable function $V: (\Omega, \mathcal{F}, \mu) \to \mathcal{B}$ is Pettis integrable if for all $E \in \mathcal{F}$ there is an element $I_V(E)\in \mathcal{B}$ such that: 
$$ \int_{E} f(V) d\mu = f(I_V(E)) \text{ for all }f\in \mathcal{B}^{*}$$

The set function $ I_V : \mathcal{F} \to \mathcal{B}$ is called the Pettis Integral of $V$ with respect to $\mu$. We may also refer to $I_V(\Omega)$ as the Pettis Integral of $V$ and denote this by $I_V$.

\end{defn}



\begin{thm} \cite{Musia2015}(Theorem 5.4)

If $\mathcal{B}$ does not contain an isomorphic copy of $(c_0,\| \cdot \|_\infty)$ then each strongly measurable and scalarly integrable $\mathcal{B}$-valued function is Pettis Integrable.

\end{thm}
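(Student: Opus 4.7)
The plan is to reduce Pettis integrability to a convergence question for a series of vectors in $\mathcal{B}$, and then invoke the Bessaga--Pe\l czy\'nski dichotomy to upgrade weak unconditional Cauchy behaviour to norm convergence. As a preliminary step, apply the Pettis measurability theorem: since $V$ is strongly measurable it is essentially separably valued, so after discarding a null set we may assume that $\mathcal{B}$ is separable, which lets us handle scalar duality through a countable norming family later if needed.

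Next, stratify the domain by the norm of $V$. Define $E_n = \{\omega : n-1 \leq \|V(\omega)\| < n \}$ for $n \geq 1$; these are measurable, pairwise disjoint, and cover $\Omega$. On each $E_n$ the restriction $V\mathds{1}_{E_n}$ is strongly measurable and bounded, hence Bochner integrable, so
\[
    x_n := \int_{E_n} V \, d\mu \in \mathcal{B}
\]
is a well-defined vector with $f(x_n) = \int_{E_n} f(V)\, d\mu$ for every $f \in \mathcal{B}^*$. I would then show that the series $\sum_n x_n$ is weakly unconditionally Cauchy: for any $f \in \mathcal{B}^*$,
\[
    \sum_n |f(x_n)| \;\leq\; \sum_n \int_{E_n} |f(V)| \, d\mu \;=\; \int_\Omega |f(V)| \, d\mu \;<\; \infty,
\]
which is finite by scalar integrability of $V$.

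The decisive step is to invoke the Bessaga--Pe\l czy\'nski theorem: a weakly unconditionally Cauchy series in a Banach space $\mathcal{B}$ is unconditionally convergent if and only if $\mathcal{B}$ contains no isomorphic copy of $(c_0, \|\cdot\|_\infty)$. Under the standing hypothesis this delivers unconditional convergence of $\sum_n x_n$ to some $I_V \in \mathcal{B}$. For an arbitrary $E \in \mathcal{F}$ the same argument applied to the sequence $\int_{E \cap E_n} V \, d\mu$ yields an element $I_V(E) \in \mathcal{B}$, and the Pettis identity $f(I_V(E)) = \int_E f(V)\, d\mu$ follows by continuity of $f$ together with the Bochner identity on each piece, together with dominated convergence applied to $|f(V)|$ to control the tails.

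The main obstacle is precisely the invocation of the Bessaga--Pe\l czy\'nski characterisation; measurability and scalar integrability alone only produce a functional on $\mathcal{B}^*$, i.e.\ an element of $\mathcal{B}^{**}$, and the hypothesis on $c_0$ is exactly what guarantees this functional is represented by a genuine element of $\mathcal{B}$. A secondary technicality is ensuring the decomposition argument transfers cleanly from $\Omega$ to each measurable $E$; this is routine, but worth verifying so that $I_V : \mathcal{F} \to \mathcal{B}$ is shown to be well defined on the whole $\sigma$-algebra rather than merely on $\Omega$.
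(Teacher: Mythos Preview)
The paper does not prove this theorem; it is quoted as Theorem~5.4 from \cite{Musia2015} and used as a black box, so there is no ``paper's own proof'' to compare against. Your proposal is therefore not competing with anything in the text --- you are supplying an argument the author deliberately outsourced.

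That said, your outline is the standard route to this result and is essentially correct. The decomposition into the norm strata $E_n$, the Bochner integrability of $V\mathds{1}_{E_n}$, the weak unconditional Cauchy estimate $\sum_n |f(x_n)| \le \int_\Omega |f(V)|\,d\mu$, and the appeal to Bessaga--Pe\l czy\'nski are exactly how Musia\l\ and others organise the proof. One minor caution: your claim that $V\mathds{1}_{E_n}$ is Bochner integrable uses boundedness \emph{and} $\mu(E_n)<\infty$; the latter is automatic in the probability-space setting of Section~\ref{Statistics} but is not part of the theorem as stated, so for a fully general $(\Omega,\mathcal{F},\mu)$ you would need either to assume $\sigma$-finiteness or to refine the decomposition (e.g.\ via simple-function approximants coming from strong measurability). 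With that caveat, the argument goes through, and your closing remark that the $c_0$-hypothesis is precisely what forces the weak integral to land in $\mathcal{B}$ rather than $\mathcal{B}^{**}$ is the right way to understand the role of the assumption.
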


Note that for a separable Banach Space the notions of weak and strong measurability coincide. Thus the previous theorem gives a sufficient criterion for Pettis Integrability in the setting of multiparameter persistence landscapes endowed with the $q$-norm for $q\in [1,\infty)$ for which the underlying Banach space is separable.

\begin{cor}\cite{Bubenik:2015}

Let $V: (\Omega, \mathcal{F}, \mu) \to \mathcal{B}$ with $\mathcal{B}$ real and separable. If $\mathbb{E}^{\mu}[\|V\|] < \infty $ then $V$ has a Pettis Integral and $\|I_V(\Omega)\| \leq \mathbb{E}^{\mu}[\|V\|] $

\end{cor}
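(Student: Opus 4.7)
The plan is to first verify scalar integrability, then construct the Pettis integral explicitly by approximation using simple functions (which the finite-expectation hypothesis makes work), and finally bound the norm via Hahn--Banach.

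First, I would verify scalar integrability: for any $f \in \mathcal{B}^{*}$, the pointwise bound $|f(V)| \leq \|f\| \cdot \|V\|$ together with $\mathbb{E}^{\mu}[\|V\|] < \infty$ gives $f(V) \in L_{1}(\mu)$. Second, since $\mathcal{B}$ is separable, Pettis's measurability theorem ensures that the Borel measurable function $V$ is strongly measurable; consequently there exists a sequence of simple functions $V_n : \Omega \to \mathcal{B}$ with $\|V_n - V\| \to 0$ pointwise and $\|V_n\| \leq 2\|V\|$ almost everywhere.

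Third, for each simple function $V_n = \sum_i x_i \mathds{1}_{A_i}$ the Pettis integral is unambiguously defined by $I_{V_n}(E) = \sum_i x_i \mu(A_i \cap E)$, and one verifies the elementary estimate $\|I_{V_n}(E) - I_{V_m}(E)\| \leq \int_{E} \|V_n - V_m\|\, d\mu$. The dominated convergence theorem (with dominator $4\|V\| \in L_1(\mu)$) forces $\int_{E} \|V_n - V_m\|\, d\mu \to 0$, so $\{I_{V_n}(E)\}_n$ is Cauchy in $\mathcal{B}$. By completeness, $I_V(E) := \lim_n I_{V_n}(E)$ exists in $\mathcal{B}$. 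The defining Pettis property then follows by continuity of each $f \in \mathcal{B}^{*}$ and a further application of dominated convergence:
\[ f(I_V(E)) = \lim_n f(I_{V_n}(E)) = \lim_n \int_{E} f(V_n)\, d\mu = \int_{E} f(V)\, d\mu. \]

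Finally, for the norm bound, Hahn--Banach produces $f \in \mathcal{B}^{*}$ with $\|f\| = 1$ and $f(I_V(\Omega)) = \|I_V(\Omega)\|$; therefore
\[ \|I_V(\Omega)\| = f(I_V(\Omega)) = \int_{\Omega} f(V)\, d\mu \leq \int_{\Omega} \|V\|\, d\mu = \mathbb{E}^{\mu}[\|V\|]. \]
The main obstacle is the Cauchy argument in the third step, which requires simultaneously controlling the approximation of $V$ and the existence of the limit in the (possibly non-reflexive) Banach space $\mathcal{B}$; the rest of the argument is routine. Note that this approach sidesteps the cited Theorem 5.4 and its hypothesis excluding $c_0$, since $\mathbb{E}^{\mu}[\|V\|] < \infty$ is strictly stronger than scalar integrability and already forces what is essentially Bochner integrability, from which Pettis integrability follows.
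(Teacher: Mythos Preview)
Your proof is correct. The paper itself does not supply a proof of this corollary; it merely states the result with a citation to \cite{Bubenik:2015}, immediately following the $c_0$-exclusion theorem of Musia\l. So there is no argument in the paper to compare against line by line.

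That said, your approach is the right one and, as you observe, is independent of the preceding Theorem~5.4: the hypothesis $\mathbb{E}^{\mu}[\|V\|] < \infty$ together with strong measurability yields Bochner integrability directly via the simple-function Cauchy argument you give, and Bochner integrability implies Pettis integrability with the same integral. This is a cleaner route than trying to invoke the $c_0$-exclusion criterion, which in any case does not apply to an arbitrary separable $\mathcal{B}$ (separable spaces can certainly contain $c_0$). One minor remark: the existence of approximating simple functions with $\|V_n\| \leq 2\|V\|$ a.e.\ is standard but worth a one-line justification if this were to appear in a final write-up; otherwise the argument is complete as stated.
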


\begin{thm}(Strong Law of Large Numbers)\cite{Talagrand2011}(Corollary 7.10)

Let $V_i$ be i.i.d copies of $V: (\Omega, \mathcal{F}, \mathbb{P}) \to \mathcal{B}$ and let $S_n = \sum_{i=1}^n V_i $. Then $\mathbb{E}[\|V\|] < \infty$ if and only if:
 $$ \frac{S_n}{n} \to I_V(\Omega) \text{ almost surely as } n \to \infty $$  

\end{thm}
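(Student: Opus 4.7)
The plan is to handle the two directions separately. The converse (``$S_n/n\to I_V$ a.s. implies $\mathbb{E}\|V\|<\infty$'') is a short Borel--Cantelli argument, while the forward direction reduces the Banach-valued case to the classical scalar Kolmogorov strong law via a finite-dimensional approximation.

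For the converse, suppose $S_n/n\to L$ almost surely for some $L\in\mathcal{B}$. Since $V_n = S_n-S_{n-1}$, one has
$$\frac{V_n}{n}=\frac{S_n}{n}-\frac{n-1}{n}\cdot\frac{S_{n-1}}{n-1}\longrightarrow L-L=0\quad\text{a.s.,}$$
so $\|V_n\|/n\to 0$ almost surely, and in particular the independent events $\{\|V_n\|>n\}$ occur only finitely often. The second Borel--Cantelli lemma then forces $\sum_{n\geq 1}\mathbb{P}(\|V\|>n)<\infty$, which by the layer-cake identity $\mathbb{E}\|V\|\leq 1+\sum_{n\geq 1}\mathbb{P}(\|V\|>n)$ is equivalent to $\mathbb{E}\|V\|<\infty$.

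For the forward direction, first apply the preceding corollary to secure $I_V(\Omega)$, and centre by replacing $V$ with $V-I_V(\Omega)$, reducing to the case $I_V=0$ with target $S_n/n\to 0$ a.s. The main strategy is approximation by finite-dimensional random variables: since $\mathcal{B}$ is separable, for every $\varepsilon>0$ there is a finite-dimensional subspace $F\subset\mathcal{B}$ and a measurable $F$-valued $W$ with $\mathbb{E}\|V-W\|<\varepsilon$ (e.g.\ by a simple-function approximation of $V$). Writing $S_n=S_n^W+S_n^{V-W}$, the classical Kolmogorov strong law applied coordinatewise in any basis of $F$ gives $S_n^W/n\to\mathbb{E}[W]$ almost surely, with $\|\mathbb{E}[W]\|<\varepsilon$. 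The triangle inequality then yields $\limsup_n \|S_n/n\| \leq \varepsilon + \limsup_n \|S_n^{V-W}/n\|$ a.s., so once the residual is controlled, sending $\varepsilon\downarrow 0$ along a countable sequence gives $S_n/n\to 0$ almost surely.

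Controlling $\limsup_n \|S_n^{V-W}/n\|$ is the main obstacle: only an $L^1$-bound on $V-W$ is available, so Chebyshev is unavailable and a direct moment estimate fails. The standard route is a truncation $V_n' = V_n\mathbf{1}_{\|V_n\|\leq n}$ (the replacement is a.s.\ invisible past some random index, by Borel--Cantelli combined with $\mathbb{E}\|V\|<\infty$), symmetrisation by coupling with an independent copy to reduce to symmetric summands, and then L\'evy's maximal inequality
$$\mathbb{P}\Bigl(\max_{k\leq n}\|S_k\|>t\Bigr)\leq 2\,\mathbb{P}(\|S_n\|>t).$$
These pieces are neatly packaged by the It\^o--Nisio theorem, which identifies convergence in probability, almost surely, and in distribution for sums of independent Banach-valued random variables; applying it to the centred, truncated residual closes the gap between the $L^1$-smallness of $V-W$ and its almost-sure control.
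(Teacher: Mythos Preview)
The paper does not prove this theorem; it simply quotes it from Ledoux--Talagrand (Corollary 7.10) as background. So there is no ``paper's own proof'' to compare against, and your proposal should be read as an independent sketch of the classical Mourier-type argument.

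Your converse direction is correct and standard. In the forward direction, the decomposition into a finite-dimensional piece $W$ plus a small-$L^1$ residual $V-W$ is the right idea, but you have misjudged the difficulty of the residual step. You write that ``a direct moment estimate fails'' and then invoke truncation, symmetrisation, L\'evy's inequality and It\^o--Nisio. In fact the residual is handled by a one-line application of the \emph{scalar} strong law: by the triangle inequality,
\[
\left\|\frac{S_n^{V-W}}{n}\right\| \;\leq\; \frac{1}{n}\sum_{i=1}^n \|V_i-W_i\| \;\longrightarrow\; \mathbb{E}\|V-W\| < \varepsilon \quad\text{a.s.},
\]
since $\|V_i-W_i\|$ are i.i.d.\ real-valued with finite mean. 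This immediately gives $\limsup_n \|S_n/n\| \leq 2\varepsilon$ a.s., and letting $\varepsilon\downarrow 0$ along a countable sequence finishes the proof. The machinery you invoke (especially It\^o--Nisio, which concerns convergence of series $\sum X_i$ rather than averages $S_n/n$) is not needed here and is somewhat misapplied. Your sketch is not wrong in spirit, but it obscures what is in fact the elementary heart of Mourier's proof: separability reduces the Banach SLLN to two invocations of Kolmogorov's scalar SLLN, one on coordinates of $W$ and one on the norms $\|V-W\|$.
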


\begin{defn}

We say a $\mathcal{B}$-valued random variable $X$ is Gaussian if for all $f\in \mathcal{B}^*$ the real valued random variable $f(X)$ is Gaussian with mean zero. Note that such a Gaussian random variable is determined by its covariance structure.

\end{defn}

The next result only applies for a certain class of Banach spaces. The type and cotype of a Banach space can be thought loosely of as a measure of how close that Banach space is to a Hilbert space. For $q\in [1,2]$ the Lebesgue space $L_q$ has type $q$ and cotype $2$, and for $q\in [2,\infty)$ the Lebesgue space $L_q$ has type $2$ and cotype $q$. 

\begin{thm}(Central Limit Theorem)\cite{Hoffmann-Jørgensen1976}

Let $\mathcal{B}$ be a Banach space of type 2 and $V: (\Omega, \mathcal{F}, \mathbb{P}) \to \mathcal{B}$. If $I_V = 0$ and $\mathbb{E}[\|V\|^2] < \infty$ then $\frac{1}{\sqrt{n}}S_n$ converges weakly to a Gaussian random variable with the same covariance structure as $V$.

\end{thm}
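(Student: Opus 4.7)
The plan is to use the standard two-step framework for weak convergence in a separable Banach space: establish convergence of finite-dimensional distributions, then upgrade to full weak convergence via tightness. The type-2 hypothesis enters precisely in the tightness step, where it supplies the key second-moment control that is unavailable in general Banach spaces.

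For finite-dimensional convergence, I would fix any finite family $f_{1},\dots,f_{k} \in \mathcal{B}^{*}$ and consider the $\mathbb{R}^{k}$-valued random variable $F(V) = (f_{1}(V),\dots,f_{k}(V))$. Since $\mathbb{E}[\|V\|^{2}] < \infty$ and each $f_{j}$ is bounded, $F(V)$ has finite second moments, and $I_{V}=0$ implies $\mathbb{E}[F(V)] = 0$ (by the defining property of the Pettis integral). The classical multivariate CLT then gives that $F(S_{n}/\sqrt{n}) = \frac{1}{\sqrt{n}}\sum_{i=1}^{n} F(V_{i})$ converges in distribution to a centred Gaussian on $\mathbb{R}^{k}$ whose covariance matches that of $F(V)$, i.e.\ the pushforward of the target Gaussian's covariance structure under $F$. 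Since this holds for every finite family of functionals, the finite-dimensional distributions of $S_{n}/\sqrt{n}$ converge to those of the desired Gaussian $G$.

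For tightness, I would invoke the type-$2$ inequality: there exists a constant $T$ such that for any finite collection of independent mean-zero $\mathcal{B}$-valued variables $W_{i}$,
\[
\mathbb{E}\Bigl\|\sum_{i=1}^{n} W_{i}\Bigr\|^{2} \leq T^{2}\sum_{i=1}^{n}\mathbb{E}\|W_{i}\|^{2}.
\]
Applied to $W_{i} = V_{i}/\sqrt{n}$, this yields $\mathbb{E}\|S_{n}/\sqrt{n}\|^{2} \leq T^{2}\,\mathbb{E}\|V\|^{2}$, so the laws of $S_{n}/\sqrt{n}$ form a bounded sequence in second moment. To convert this into tightness on the infinite-dimensional space $\mathcal{B}$, I would approximate $V$ in $L^{2}(\mathcal{B})$ by a random variable $V^{(m)}$ supported in a finite-dimensional subspace $F_{m}\subset\mathcal{B}$ with $\mathbb{E}\|V - V^{(m)}\|^{2} < \varepsilon_{m}\to 0$ (possible because $\mathcal{B}$ is separable, so the range of $V$ is essentially separable and can be projected onto increasing finite-dimensional subspaces). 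The finite-dimensional component $\frac{1}{\sqrt{n}}\sum_{i}V^{(m)}_{i}$ is tight by the finite-dimensional CLT, while the type-2 inequality controls the residual $\frac{1}{\sqrt{n}}\sum_{i}(V_{i}-V^{(m)}_{i})$ uniformly in $n$ by $T\sqrt{\varepsilon_{m}}$; Markov's inequality then gives tightness of the whole sequence.

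Combining the two steps via the standard Portmanteau argument — any subsequential weak limit exists by tightness and must agree with $G$ on all finite-dimensional projections, hence equals $G$ — yields the stated weak convergence. The main obstacle is the tightness step: in a general Banach space one cannot pass from moment control to tightness, and the whole argument relies on the type-2 inequality to make the finite-dimensional approximation error negligible uniformly in $n$. This is exactly why the theorem's hypothesis is placed on $\mathcal{B}$ rather than just on $V$.
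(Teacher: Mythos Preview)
The paper does not prove this theorem; it is quoted from the literature \cite{Hoffmann-Jørgensen1976} and then applied to persistence landscapes in the corollaries that follow. So there is no proof in the paper to compare against.

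Your sketch is the standard argument and is correct in outline. One imprecision worth flagging: in a general Banach space you cannot simply ``project onto increasing finite-dimensional subspaces,'' since bounded linear projections onto arbitrary subspaces need not exist. What actually makes the approximation step work is that $V$, being strongly measurable with $\mathbb{E}\|V\|^{2}<\infty$, can be approximated in $L^{2}(\mathcal{B})$ by \emph{simple} (finite-valued) random variables, which automatically take values in finite-dimensional subspaces; after centring, these serve as your $V^{(m)}$. With that substitution the rest of your argument goes through: the type-$2$ inequality controls the residual $\frac{1}{\sqrt{n}}\sum_{i}(V_{i}-V^{(m)}_{i})$ uniformly in $n$, tightness follows, and any subsequential weak limit is determined by its finite-dimensional projections, which are centred Gaussian with the covariance structure of $V$.
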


\subsection{Convergence Results for Multiparameter Landscapes}

We shall take the same probabilistic approach as in \cite{Bubenik:2015} in viewing multiparameter landscapes derived from a data set as a Banach space valued random variable. The model for applying statistical analysis to persistence landscapes will likely trace the following general setup:

Suppose $X$ is a Borel measurable random variable on some probability space $(\Omega, \mathcal{F}, \mathbb{P})$ thought of as sampling data from some distribution. Further let $\Lambda = \Lambda(X)$ denote the multiparameter persistence landscape associated to some multifiltration of the data $X$, so that in summary $\Lambda : (\Omega, \mathcal{F}, \mathbb{P}) \to L_q(\mathbb{N}\times\mathbb{R}^n)$ is a random variable taking values in a real, separable Banach Space.

Let $\{X_i\}$ be i.i.d copies of $X$ and $\{\Lambda_i\}$ their associated landscapes. Denoting the pointwise mean of the first $n$ landscapes by $\overline{\Lambda}^n$ and applying the general theory of probability in Banach spaces presented above we attain several results. Observe that in practice we may be required to truncate our multiparameter landscapes to a bounded region in order to satisfy the finiteness criteria in the convergence results. 

\begin{thm}(Strong Law of Large Numbers)

With our notation as in the above discussion $\overline{\Lambda}^n \to I_\Lambda$ almost surely if and only if $\mathbb{E}[\|\Lambda\|] < \infty$.

\end{thm}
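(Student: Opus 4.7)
The plan is to reduce the statement directly to the general Strong Law of Large Numbers for Banach space valued random variables stated earlier in the excerpt. Since $\overline{\Lambda}^n = \frac{1}{n} S_n$ with $S_n = \sum_{i=1}^n \Lambda_i$, the conclusion follows verbatim from that theorem once we verify that its hypotheses are satisfied in our setting.

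The bulk of the proof therefore consists of checking three structural conditions. First, I would note that for $q \in [1,\infty)$ the Lebesgue space $L_q(\mathbb{N} \times \mathbb{R}^n)$ (with respect to counting measure on $\mathbb{N}$ and Lebesgue measure on $\mathbb{R}^n$) is a real, separable Banach space, so the ambient space $\mathcal{B}$ in the general theorem may be taken to be this space. Second, I would argue Borel measurability of $\Lambda : \Omega \to L_q(\mathbb{N} \times \mathbb{R}^n)$: the landscape construction $M \mapsto \lambda_\infty(M)$ is continuous with respect to the interleaving distance on modules (by Theorem \ref{thm:stability}, which gives $\|\lambda_\infty(M) - \lambda_\infty(M')\|_\infty \leq d^{\omega}(M,M')$ and hence, after truncation to a bounded region if needed, also a bound in the $q$-norm), so the composition with the measurable map $X$ is measurable. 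Third, the $\Lambda_i = \Lambda(X_i)$ are i.i.d.\ because the $X_i$ are and $\Lambda$ is a (deterministic, measurable) function of the data.

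With these ingredients the general Strong Law of Large Numbers of Talagrand applies directly: $\mathbb{E}[\|\Lambda\|] < \infty$ if and only if $\frac{1}{n} S_n \to I_\Lambda$ almost surely, which is exactly $\overline{\Lambda}^n \to I_\Lambda$ almost surely. In the direction where $\mathbb{E}[\|\Lambda\|] < \infty$, the existence of the Pettis integral $I_\Lambda$ is guaranteed by the corollary quoted from \cite{Bubenik:2015}, so the right-hand side of the equivalence is meaningful.

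The only mild obstacle is the measurability verification, since one has to ensure that the map from $\Omega$ into the Banach space $L_q(\mathbb{N} \times \mathbb{R}^n)$ is Borel (not merely scalarly) measurable. In our separable setting weak and strong measurability coincide, so it suffices to check that for each $(k,\vec{x})$ the scalar $\Lambda(k,\vec{x})$ is a measurable function of $\omega \in \Omega$; this follows because evaluation of $\lambda_\infty$ at $(k,\vec{x})$ is continuous in the module under $d^{\omega}$ by Theorem \ref{thm:stability} and the data-to-module pipeline is measurable. Everything else is a direct invocation of the cited results, so no further work is required.
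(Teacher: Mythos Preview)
Your proposal is correct and follows exactly the route the paper takes: the paper does not give a standalone proof of this theorem but simply states it as a direct application of the general Banach space SLLN (Talagrand, Corollary~7.10) quoted in the preceding subsection, after noting in the setup that $\Lambda$ is a random variable taking values in the real, separable Banach space $L_q(\mathbb{N}\times\mathbb{R}^n)$. Your additional care in spelling out separability, measurability via the stability theorem, and the existence of the Pettis integral is more detail than the paper provides, but it is the same argument.
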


\begin{thm}(Central Limit Theorem)

Let us consider the landscapes endowed with the $q$-Landscape distance for $q \geq 2$. Suppose $\mathbb{E}[\|\Lambda\|] < \infty$ and $\mathbb{E}[\|\Lambda^2\|] < \infty$, then $\sqrt{n}(\overline{\Lambda}^n - I_\Lambda(\Omega)) $ converges weakly to a Gaussian random variable $G(\Lambda)$ with the same covariance structure as $\Lambda$.

\end{thm}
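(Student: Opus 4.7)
The plan is to reduce the result to a direct application of the Hoffmann-Jørgensen Central Limit Theorem cited in the previous subsection, after centering the random variable. First I would fix the ambient space: for $q \in [2,\infty)$ the Lebesgue space $L_q(\mathbb{N}\times \mathbb{R}^n)$ is real, separable, and of type $2$, as recalled in the paragraph preceding the cited CLT. This is precisely the hypothesis required for that theorem to be applicable, so the role of the condition $q \geq 2$ is simply to place $\Lambda$ in a type-$2$ Banach space.

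Next, I need to show that the Pettis integral $I_\Lambda(\Omega)$ exists as an element of $L_q$, so that centering is meaningful. The hypothesis $\mathbb{E}[\|\Lambda\|] < \infty$ together with separability puts us in the setting of the corollary of \cite{Bubenik:2015} above, which yields both the existence of $I_\Lambda(\Omega)$ and the bound $\|I_\Lambda(\Omega)\| \leq \mathbb{E}[\|\Lambda\|]$. Setting $\tilde{\Lambda}_i = \Lambda_i - I_\Lambda(\Omega)$, the $\tilde{\Lambda}_i$ are i.i.d.\ copies of a Banach-valued random variable $\tilde{\Lambda}$ whose Pettis integral vanishes by linearity (applied through arbitrary continuous functionals $f \in \mathcal{B}^*$).

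Then I would verify the second moment condition for $\tilde{\Lambda}$: by the triangle inequality and $(a+b)^2 \leq 2(a^2+b^2)$,
\[
\mathbb{E}[\|\tilde{\Lambda}\|^2] \leq 2\,\mathbb{E}[\|\Lambda\|^2] + 2\|I_\Lambda(\Omega)\|^2,
\]
and the right-hand side is finite: the first term is finite by the hypothesis (reading $\mathbb{E}[\|\Lambda^2\|]$ as $\mathbb{E}[\|\Lambda\|^2]$), and $\|I_\Lambda(\Omega)\|^2 \leq (\mathbb{E}[\|\Lambda\|])^2 \leq \mathbb{E}[\|\Lambda\|^2]$ by Jensen's inequality. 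With type $2$, vanishing Pettis integral, and finite second moment all verified, the Hoffmann-Jørgensen CLT yields weak convergence of $\frac{1}{\sqrt{n}}\sum_{i=1}^n \tilde{\Lambda}_i = \sqrt{n}(\overline{\Lambda}^n - I_\Lambda(\Omega))$ to a Gaussian $G(\tilde{\Lambda})$ with the covariance structure of $\tilde{\Lambda}$. Since translation by the constant $I_\Lambda(\Omega)$ leaves the covariance structure unchanged, this is the covariance structure of $\Lambda$, and we set $G(\Lambda) := G(\tilde{\Lambda})$.

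I do not anticipate any deep obstacle here: the entire argument is a bookkeeping exercise wrapping the abstract Banach-valued CLT in the landscape setting. The only delicate point is justifying the centering step (which requires existence of the Pettis integral, handled by the first moment hypothesis and the Musial-type theorem above) and confirming that the stated second moment hypothesis actually implies finiteness of $\mathbb{E}[\|\tilde{\Lambda}\|^2]$, for which the elementary bound displayed above suffices.
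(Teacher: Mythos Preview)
Your proposal is correct and matches the paper's approach: the paper does not give an explicit proof of this theorem, treating it as an immediate consequence of the Hoffmann-J\o rgensen CLT for type-$2$ Banach spaces quoted just before, and your write-up simply fills in the routine centering and moment-checking details that the paper leaves implicit.
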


The central limit theorem for the landscapes induces a central limit theorem for associated real valued random variables and facilitates the computation of approximate confidence intervals.

\begin{cor}

Let us consider the landscapes endowed with the $q$-Landscape distance for $q \geq 2$. Suppose $\mathbb{E}[\|\Lambda\|] < \infty$ and $\mathbb{E}[\|\Lambda^2\|] < \infty$. Furthermore let $f\in L_q(\mathbb{N}\times\mathbb{R}^n)^* \cong L_p(\mathbb{N}\times\mathbb{R}^n)$, so that $Y = f(\Lambda)$ is a real valued random variable. Then $\sqrt{n}(\overline{Y}^n - \mathbb{E}[Y]) \to \mathcal{N}(0,\text{Var}(Y))$ converges in distribution.

\end{cor}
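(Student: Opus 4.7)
The plan is to derive this corollary as a direct consequence of the preceding Central Limit Theorem for the Banach space valued random variable $\Lambda$, by transporting the weak convergence through the continuous linear functional $f$.

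First, I would verify that the hypotheses of the Banach space CLT apply: $L_q(\mathbb{N}\times \mathbb{R}^n)$ with $q\geq 2$ is a separable Banach space of type $2$, and the finiteness hypotheses $\mathbb{E}[\|\Lambda\|]<\infty$ and $\mathbb{E}[\|\Lambda\|^2]<\infty$ ensure Pettis integrability (giving a well-defined mean $I_\Lambda(\Omega)$) and the required second moment. Invoking the CLT yields that $\sqrt{n}(\overline{\Lambda}^n - I_\Lambda(\Omega))$ converges weakly to a Gaussian Banach-valued random variable $G(\Lambda)$ with the same covariance structure as $\Lambda$.

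Next I would push this weak convergence forward along $f$. Since $f \in L_q(\mathbb{N}\times\mathbb{R}^n)^*$ is continuous, the continuous mapping theorem implies
\[
f\bigl(\sqrt{n}(\overline{\Lambda}^n - I_\Lambda(\Omega))\bigr) \xrightarrow{d} f(G(\Lambda)).
\]
Linearity of $f$ rewrites the left-hand side: $f(\overline{\Lambda}^n) = \frac{1}{n}\sum_{i=1}^n f(\Lambda_i) = \overline{Y}^n$, and the defining property of the Pettis integral gives $f(I_\Lambda(\Omega)) = \mathbb{E}[f(\Lambda)] = \mathbb{E}[Y]$. Hence the left-hand side equals $\sqrt{n}(\overline{Y}^n - \mathbb{E}[Y])$.

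Finally I would identify the limiting distribution. By definition of a Gaussian Banach space random variable, $f(G(\Lambda))$ is a real-valued Gaussian with mean zero; and since $G(\Lambda)$ has the same covariance structure as $\Lambda$, its variance is
\[
\mathbb{E}\bigl[f(G(\Lambda))^2\bigr] = \mathbb{E}\bigl[(f(\Lambda) - \mathbb{E}[f(\Lambda)])^2\bigr] = \mathrm{Var}(Y),
\]
so $f(G(\Lambda)) \sim \mathcal{N}(0,\mathrm{Var}(Y))$, completing the argument. There is no real obstacle here; the entire content of the corollary is that weak convergence in the Banach space is preserved under any continuous linear functional, and the only thing to check carefully is that $f$ commutes with the Pettis expectation, which is precisely the defining property of that integral.
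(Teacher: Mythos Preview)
Your proposal is correct and matches the paper's intended argument: the paper states this corollary without proof, merely noting beforehand that ``the central limit theorem for the landscapes induces a central limit theorem for associated real valued random variables,'' and your write-up is precisely the standard unpacking of that sentence via the continuous mapping theorem and the defining property of the Pettis integral.
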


\begin{cor}(Approximate Confidence Intervals)

Suppose $Y$ is a real-valued random variable attained from a functional applied to the multiparameter landscape $\Lambda$ satisfying the conditions of the previous Corollary.
Let $\{Y_i\}_{i=1}^n$ be i.i.d. instances of this random variable and $S_n^2 = \frac{1}{n-1} \sum_{i=1}^n (Y_i - \overline{Y}_n)^2$ the sample variance.
An approximate $(1-\alpha)$ confidence interval for $\mathbb{E}[Y]$ is given by: $[\overline{Y}_n- z_{\frac{\alpha}{2}}\frac{S_n}{\sqrt{n}} ,\overline{Y}_n+z_{\frac{\alpha}{2}}\frac{S_n}{\sqrt{n}}]$, where  $z_{\frac{\alpha}{2}}$ is the $\frac{\alpha}{2}$ critical value for the normal distribution.
\end{cor}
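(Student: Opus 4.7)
The plan is to reduce this to a standard statistical argument on the real line by combining the previous Corollary (the scalar central limit theorem for $Y = f(\Lambda)$) with the law of large numbers for the sample variance, and then invoke Slutsky's theorem.

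First I would verify that $\mathrm{Var}(Y)$ is finite, so that the normalising constants make sense. Since $f \in L_q(\mathbb{N}\times\mathbb{R}^n)^*$ is a bounded linear functional, $|Y| = |f(\Lambda)| \leq \|f\|_*\,\|\Lambda\|$, and therefore $\mathbb{E}[Y^2] \leq \|f\|_*^2\,\mathbb{E}[\|\Lambda\|^2] < \infty$ by the hypothesis $\mathbb{E}[\|\Lambda^2\|] < \infty$. In particular $\mathbb{E}[Y]$ and $\mathrm{Var}(Y)$ are finite, and the previous Corollary applies to give
\[
\sqrt{n}\bigl(\overline{Y}_n - \mathbb{E}[Y]\bigr) \;\xrightarrow{d}\; \mathcal{N}\bigl(0,\mathrm{Var}(Y)\bigr).
\]

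Next I would show that $S_n^2 \to \mathrm{Var}(Y)$ almost surely (hence in probability). Expanding the sample variance in the usual way,
\[
S_n^2 = \frac{n}{n-1}\left(\frac{1}{n}\sum_{i=1}^n Y_i^2 - \overline{Y}_n^{\,2}\right),
\]
the classical (real-valued) strong law of large numbers applied separately to $\{Y_i\}$ and $\{Y_i^2\}$ (both integrable by the previous paragraph) shows $\overline{Y}_n \to \mathbb{E}[Y]$ and $\frac{1}{n}\sum Y_i^2 \to \mathbb{E}[Y^2]$ almost surely. Hence $S_n^2 \to \mathbb{E}[Y^2] - (\mathbb{E}[Y])^2 = \mathrm{Var}(Y)$ almost surely, and consequently $S_n \to \sqrt{\mathrm{Var}(Y)}$ almost surely.

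Finally I would combine the two via Slutsky's theorem: dividing the CLT display by $S_n$ yields
\[
\frac{\sqrt{n}\,(\overline{Y}_n - \mathbb{E}[Y])}{S_n} \;\xrightarrow{d}\; \mathcal{N}(0,1).
\]
The stated interval $\bigl[\overline{Y}_n - z_{\alpha/2}\,S_n/\sqrt{n},\ \overline{Y}_n + z_{\alpha/2}\,S_n/\sqrt{n}\bigr]$ is then the standard symmetric normal interval, covering $\mathbb{E}[Y]$ with asymptotic probability $1-\alpha$ by definition of $z_{\alpha/2}$. No step is genuinely hard here; the only subtlety worth checking carefully is the integrability estimate $\mathbb{E}[Y^2] < \infty$, which is what links the Banach-space hypothesis $\mathbb{E}[\|\Lambda^2\|]<\infty$ to the scalar variance needed for the SLLN on $\{Y_i^2\}$.
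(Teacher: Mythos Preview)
Your argument is correct and complete: the finiteness of $\mathbb{E}[Y^2]$ via $|f(\Lambda)|\leq \|f\|_*\|\Lambda\|$, the SLLN for $S_n^2$, and Slutsky's theorem together give exactly the asymptotic coverage claimed. The paper itself does not supply a proof for this corollary; it states the result as an immediate consequence of the preceding scalar CLT and leaves the standard real-variable argument implicit, so your write-up in fact fills in what the paper omits.
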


In practice, a functional of choice could be given by integrating the landscapes over a subset $E$ of the parameter domain $f_E(\Lambda) = \int_E \Lambda \ d\mathbb{P}$. These functionals can be used to establish the significance of homological features in different regions of the parameter space.

We remark that recent work has attained confidence bands for single parameter persistence landscapes \cite{Chazal2014StochasticCO} \cite{Chazal2013Bootstrap}. It would be interesting to see similar analysis performed in the multiparameter setting. 

\section{Example Computations and Machine Learning Applications}
\label{Computations}

In this section we shall present example computations of multiparameter persistence landscapes and demonstrate a simple application of machine learning to the persistence landscapes. We use the RIVET software for computations of 2-parameter persistence modules presented in \cite{Lesnick2015}. RIVET supports the fast computation of multigraded Betti-numbers and an interactive visualisation for 2-parameter persistence modules. The software computes a data structure associated to a module which facilitates real time queries of the fibered barcode space. As far as we know, RIVET is the only publicly available TDA software package supporting multiparameter persistent homology calculations.

The software supports a range of input formats including: point cloud, metric space, algebraic chain complex, and explicit bifiltered complex. In particular we shall use the software to calculate and query the fibered barcode associated to a module along a selection of one dimensional slices of the parameter space. Further details of the software may be found in \cite{Lesnick2015}.


Computation of the module with RIVET is the most computationally expensive procedure in our calculations. Details of the time and space complexity of the algorithm may be found in \cite{Lesnick2015}, loosely if $m$ denotes the size of the filtered complex associated to the input data, in the worst case one requires time $O(m^5)$ and space $O(m^5)$ to compute the data structure which admits fast queries of the fibered barcode space. 

In theory, since our landscape is derived solely from the rank invariant, we need not calculate the full module and fibered barcode space. Recall that the value of the multiparameter uniform persistence $\infty$-landscape at each point can be calculate using the single parameter persistence landscape associated to the line of slope $\vec{1}$ passing through that point. Thus we could reduce the computation of the multiparameter landscape in any dimension to repeated single parameter persistent homology calculations. This reduction would be highly parallelizable and likely to provide significant speedup.

\begin{prop}
Let $M \in \vec{vect}^{\vec{R}^2}$ be a multiparameter persistence module derived from a simplicial complex with $m$ simplices. Let $\varepsilon$ be some tolerance value and $[0,R]\times[0,R] \subset \mathbb{R}^2$ a subset of our parameter space. Then we can compute an $\varepsilon$-approximate $\lambda_M^{(\varepsilon)}$ to the uniform persistence landscape  $\lambda_M$ of $M$ on the region $[0,R]\times[0,R]$ in time $O(m^3 \frac{R}{\varepsilon}).$ Our approximation is with respect to the infinity norm $\|\lambda_M^{(\varepsilon)} - \lambda_M\|_\infty \leq \varepsilon$
\end{prop}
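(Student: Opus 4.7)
The plan is to reduce the two-parameter landscape computation to a collection of single-parameter persistent homology computations along slope-$\vec{1}$ lines, then use the 1-Lipschitz property of the uniform landscape to extend values off these lines at a controlled error.

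First, recall the earlier proposition that for any slope-$\vec{1}$ line $L \subset \mathbb{R}^2$ and isometric parametrisation $\iota_L$, the restriction of $\lambda_M$ to $L$ coincides with the single-parameter landscape $\lambda_{M^L}$. A direct calculation shows that the $\infty$-distance from a point $(a,b)$ to the line $\{y=x+c\}$ equals $\tfrac{1}{2}|c-(b-a)|$. Since on the region $[0,R]\times[0,R]$ the quantity $b-a$ ranges over $[-R,R]$, selecting slope-$\vec{1}$ lines $L_1,\dots,L_N$ with offsets $c_j$ spaced by $2\varepsilon$ produces a family of $N = O(R/\varepsilon)$ lines such that every $\vec{x}\in[0,R]^2$ lies within $\infty$-distance $\varepsilon$ of some $L_{j(\vec{x})}$.

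Second, for each line $L_j$, I would compute the barcode $\mathcal{B}(M^{L_j})$ of the one-parameter slice. Using RIVET's precomputed data structure one can query each fibered barcode efficiently; even falling back on the worst-case bound, the standard matrix-reduction algorithm computes the persistence of a one-parameter filtration on $m$ simplices in time $O(m^3)$. From $\mathcal{B}(M^{L_j})$ one builds the single-parameter landscape $\lambda_{M^{L_j}}$ in linear time in the barcode size, which is $O(m)$. The total cost over all lines is therefore $O(m^3 \cdot N) = O(m^3 R/\varepsilon)$.

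Third, I define the approximant
\[
\lambda_M^{(\varepsilon)}(k,\vec{x}) \;:=\; \lambda_{M^{L_{j(\vec{x})}}}\!\bigl(k,\iota_{L_{j(\vec{x})}}^{-1}(\pi_{j(\vec{x})}(\vec{x}))\bigr),
\]
where $\pi_j(\vec{x})$ is an $\infty$-closest point to $\vec{x}$ on $L_j$. By the slice proposition the value on the right equals $\lambda_M(k,\pi_{j(\vec{x})}(\vec{x}))$, and by the 1-Lipschitz property (in the $\infty$-norm) of $\lambda_M$ established earlier, the difference from $\lambda_M(k,\vec{x})$ is at most $\|\vec{x}-\pi_{j(\vec{x})}(\vec{x})\|_\infty \leq \varepsilon$. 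Hence $\|\lambda_M^{(\varepsilon)}-\lambda_M\|_\infty\leq \varepsilon$.

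The only genuine obstacle is bookkeeping: one must verify that the spacing constant on $c$ yields the required $\varepsilon$-net (a routine calculation using the formula for $\infty$-distance from a point to a slope-$\vec{1}$ line above), and one must ensure that the depth $k$ ranges over a bounded set — here the bound $k \leq m$ suffices since $M^{L_j}$ has at most $m$ interval summands, so outputting the landscape on each line amounts to $O(m)$ piecewise-linear functions. Combining these, the overall complexity is $O(m^3 R/\varepsilon)$, as claimed.
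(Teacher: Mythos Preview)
Your proposal is correct and follows essentially the same strategy as the paper: cover the region with $O(R/\varepsilon)$ slope-$\vec{1}$ lines, compute the one-parameter persistence (and hence landscape) along each in $O(m^3)$, and invoke the 1-Lipschitz property of $\lambda_M$ to extend to an $\varepsilon$-approximation. The paper phrases the covering via an $\varepsilon$-grid rather than via line offsets, and it cites $O(m^2)$ (not $O(m)$) for building each landscape from its barcode, but neither affects the argument or the final bound.
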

\begin{proof}
    Divide the region $[0,R]\times[0,R]$ into a grid of spacing $\varepsilon$. It suffices to calculate the values of the landscape on this grid since the landscape functions are 1-Lipschitz and so we can extend the grid values to an $\varepsilon$-approximate function on $[0,R]\times[0,R]$.
    Thus we reduce our computation to the computation of $\frac{2R}{\varepsilon}$ single parameter landscapes corresponding to the collection of $\frac{2R}{\varepsilon}$ slope $\vec{1}$ lines passing through the points of the grid.
    
    Given birth-death pairs \cite{BubenikDlotko2017} provides an algorithm to compute the persistence landscapes in time $O(m^2)$. It is well known from \cite{books/daglib/0025666} that one can produce birth-death pairs from a filtration of size $m$ in time $O(m^3)$. Hence the result follows.
\end{proof}

It is possible that the above time estimate for the landscape computation could be improved by using vineyard style updates between the single parameter landscapes \cite{Cohen-Steiner2006}. Moreover it may be that in practical applications, computing the module with RIVET in time $O(m^5)$ and using the fibered barcode queries will be faster than the computation of a series of single parameter landscapes. Note also that the $\frac{2R}{\varepsilon}$ single parameter landscape calculations are independent and so can be computed in parallel. We postpone comparisons of different computational algorithms, benchmarking, and efficient implementation to follow up work.

One may want to utilise machine learning algorithms with landscape functions as a collection of features for a data set. Recall that if we consider the persistence landscapes associated with the 2-Landscape distance then we are naturally in the setting of a Hilbert Space. The inner product on this space is positive definite on the space of persistence landscapes. As such we may use this kernel to learn non-linear relationships in our data and then apply convex optimisation techniques to an SVM.

Another point to note is that integrating an $n$-dimensional landscape over a finite resolution gives an $n$-dimensional array as a summary of our data to which one could apply a convolutional neural network. This transform from landscape to multidimensional array will satisfy stability with respect to the landscape distance. A similar approach is used in \cite{Adams2015} to produce a \textit{persistence image} from a persistence diagram.

We provide three computational examples together with the application of a basic statistical test and standard SVM classifier. Our examples demonstrate that the multiparameter landscape is sensitive to both topology and geometry. We do not claim that the multiparameter landscape is the optimal analytic tool to perform the various tasks in our examples, rather we demonstrate a range of potential applications.


\subsection{Concentric Circles}

Our first example will look at pointclouds sampled from densities concentrated around a pair of concentric circles with radii $1$ and $3$ respectively. We colour the points from each circle in two distinct ways. Colouring A assigns the large circle colour parameter $0.5$ and the small circle colour parameter $1.5$. Colouring B assigns the small circle  colour parameter $0.5$ and the large circle colour parameter $1.5$. We examine how the multiparameter landscapes differ depending on the colouring of the circles. For each colouring we perform $30$ samples, each sample consisting of $100$ points uniformly sampled from each circle Figure \ref{subfig:CirclePointclouds}.

We produce a filtration on each pointcloud with the Rips filtration in the first parameter and the colour parameter in the second parameter. Thus at parameter value $(r,c)\in\mathbb{R}^2$, we have the space $X_{(r,c)} = \text{VR}(\mathcal{P}_{c},r)$ where $\mathcal{P}_{c}$ denotes the sampled points with colour parameter no more than $c$.

 We compute the average landscapes of the $H_1$-modules for the two different colourings, Figure \ref{fig:ColouredCircles}. When the large circle has the smaller colour parameter value, the first landscape $(k=1)$ can detect the large circle Figure \ref{subfig:FirstMeanLandscape}. We see the large circle in the first landscape as the large mountain spanning the parameter subspace $[1,5.4]\times[0.5,1.5]$. When the large circle has the higher parameter value, the persistence in the Rips filtration parameter is diminished by the presence of the small circle with smaller colour parameter.
In both colourings, the second landscape $(k=2)$ exhibits the range of parameter values for which both circles are detected Figure \ref{subfig:SecondMeanLandscape}.

We test the robustness of the landscape by repeating the sampling this time with only $50$ points per circle and perturbing both the radii and colour of the sampled points with the addition of i.i.d. normals $\mathcal{N}(0,0.3)$, Figure \ref{subfig:NoisyCirclePointclouds}. We illustrate in Figure \ref{subfig:NoisyFirstMeanLandscape} and Figure \ref{subfig:NoisySecondMeanLandscape} the average landscapes taken over $30$ noisy samples. The resulting landscapes are similar to those of the larger samples without noise.

Let us perform a statistical test to determine whether the multiparameter landscapes can detect that the noisy samples are drawn from different distributions. Consider the functional $f_E(\lambda) = \int_E \lambda  d\mu$. Using the results of Section \ref{Statistics} we find approximate confidence intervals for $f_E(\lambda)$ with $E = \{1\}\times ([2,6]\times[0,1.5]) \subset \mathbb{N}\times \mathbb{R}^2$. We attain approximate $99\%$-confidence intervals on the noisy samples: for Colouring A $[0.400,0.474]$, and for Colouring B $[0.00556,0.00809]$. A two sample $t$-test on the values of this functional on the two sets of colourings attains a $p$-value of $0.00629$. Thus we reject the null hypothesis that the functional values on the landscapes of the two colourings have the same mean.
\begin{figure}[p]
  \centering
  \begin{subfigure}[b]{0.9\linewidth}
    \includegraphics[width=\linewidth]{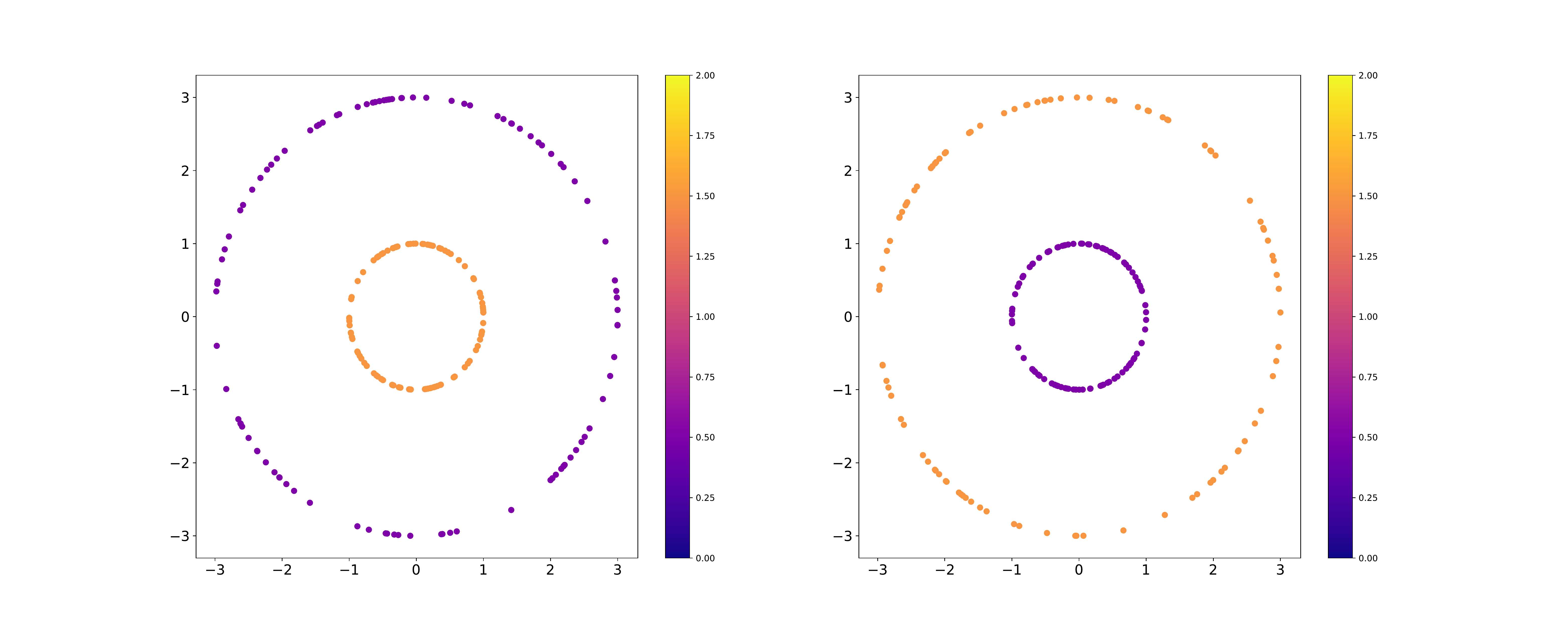}
    \caption{An example point cloud sample from each colouring.}
    \label{subfig:CirclePointclouds}
  \end{subfigure}
  \begin{subfigure}[b]{0.9\linewidth}
    \includegraphics[width=\linewidth]{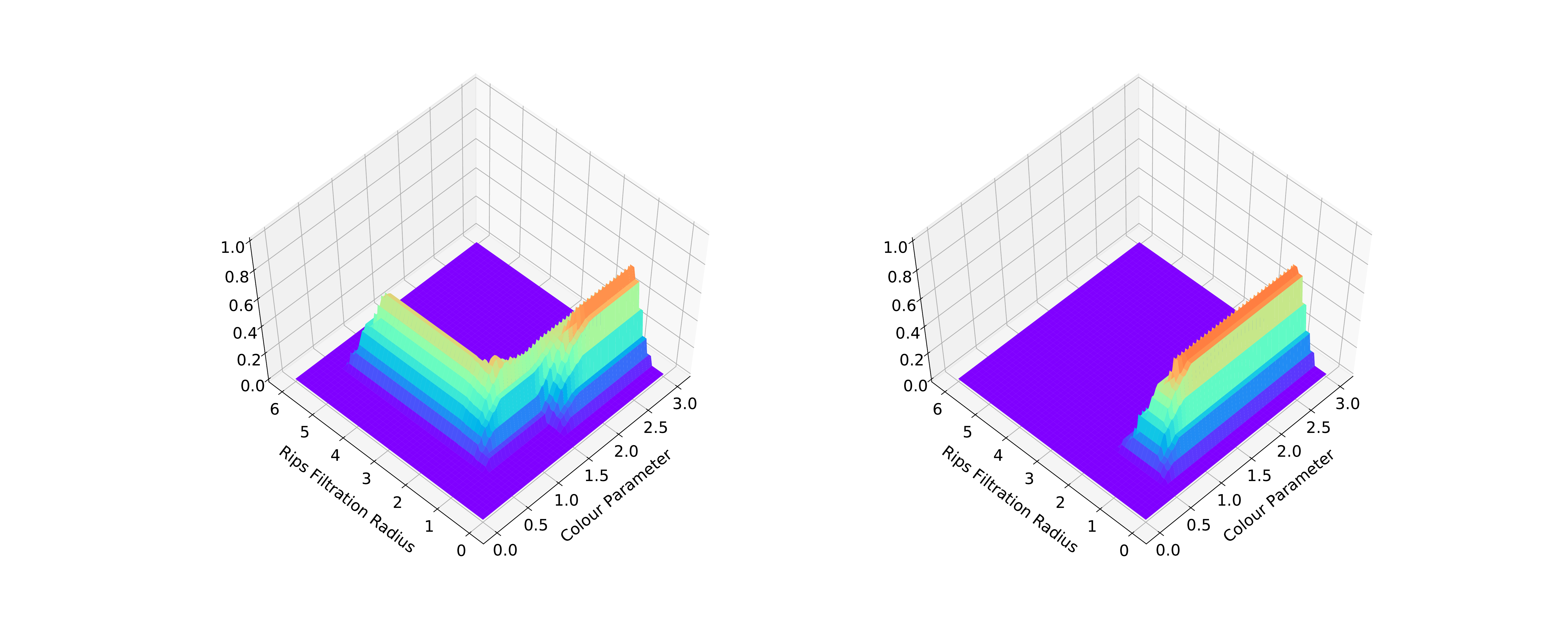}
    \caption{The mean first landscape for each colouring taken over the $30$ samples, $\overline{\lambda_2}(1,\vec{x})$.}
    \label{subfig:FirstMeanLandscape}
  \end{subfigure}
\begin{subfigure}[b]{0.9\linewidth}
    \includegraphics[width=\linewidth]{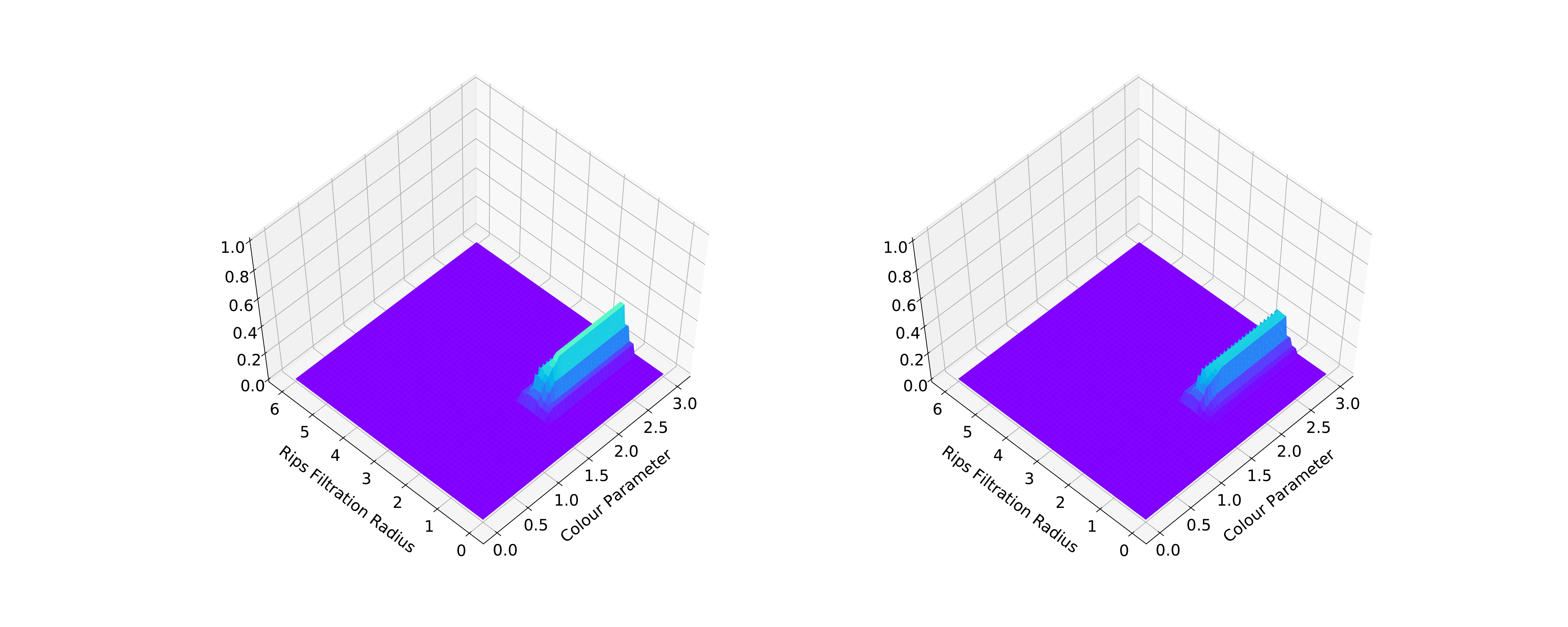}
    \caption{The mean second landscape for each colouring taken over the $30$ samples, $\overline{\lambda_2}(2,\vec{x})$.}
    \label{subfig:SecondMeanLandscape}
  \end{subfigure}
  \caption{The first column shows the plots for Colouring A and the second column Colouring B.}
  \label{fig:ColouredCircles}
\end{figure}

\begin{figure}[p]
  \centering
  \begin{subfigure}[b]{0.9\linewidth}
    \includegraphics[width=\linewidth]{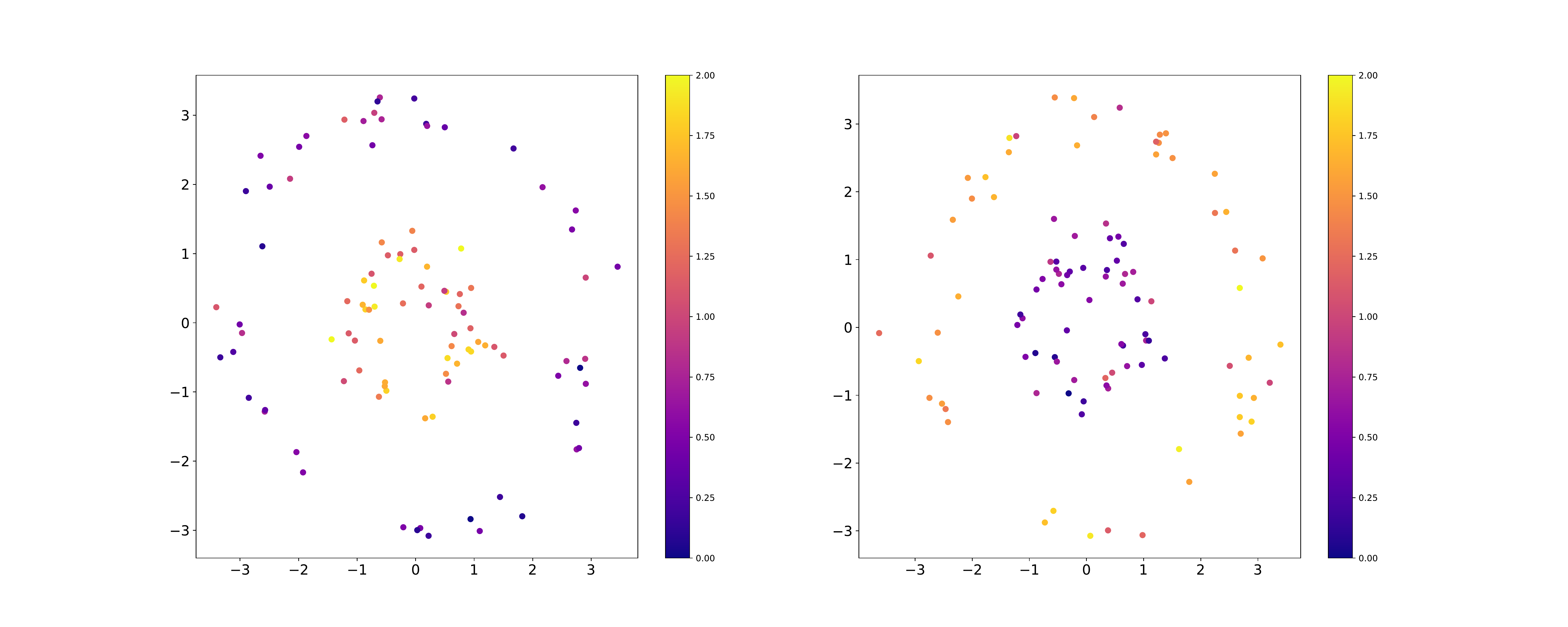}
    \caption{An example point cloud sample from each colouring with noise added}
    \label{subfig:NoisyCirclePointclouds}
  \end{subfigure}
  \begin{subfigure}[b]{0.9\linewidth}
    \includegraphics[width=\linewidth]{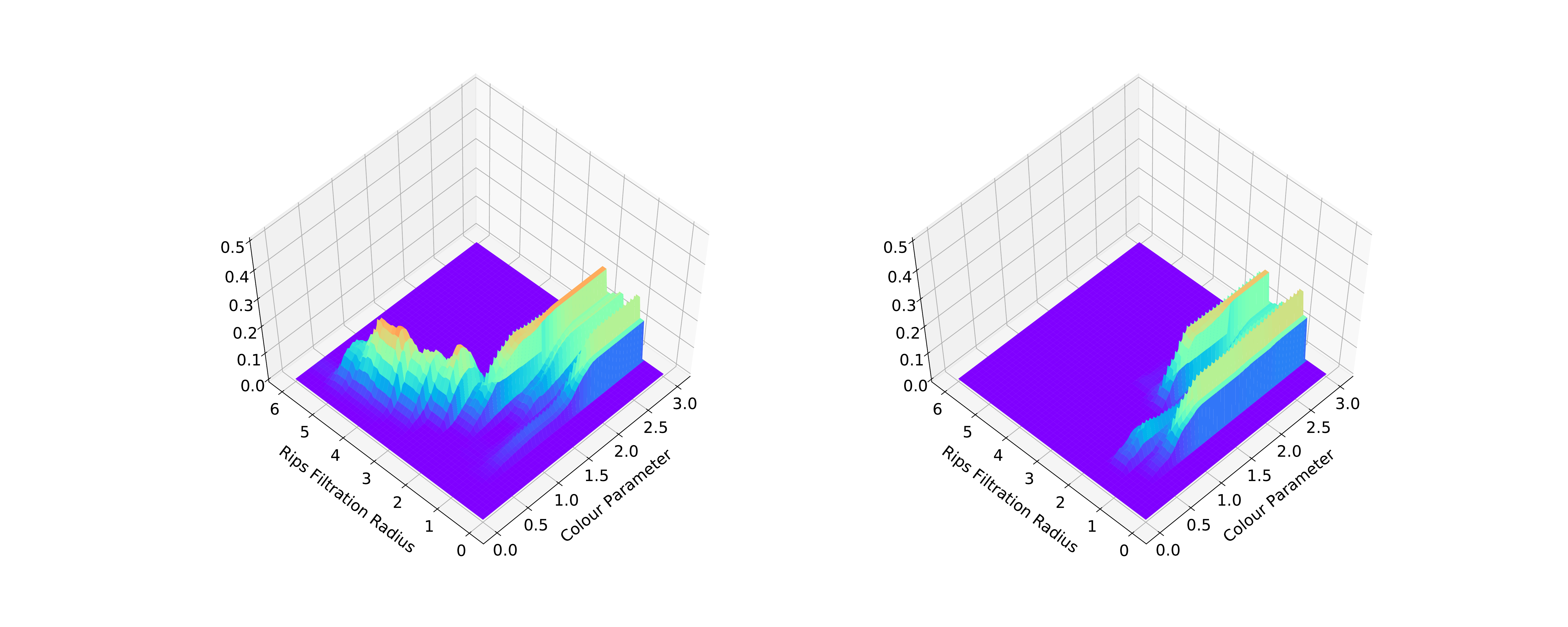}
    \caption{The mean first landscape $\overline{\lambda}_2(1,\vec{x})$ taken over the $30$ noisy samples.}
    \label{subfig:NoisyFirstMeanLandscape}
  \end{subfigure}
   \begin{subfigure}[b]{0.9\linewidth}
    \includegraphics[width=\linewidth]{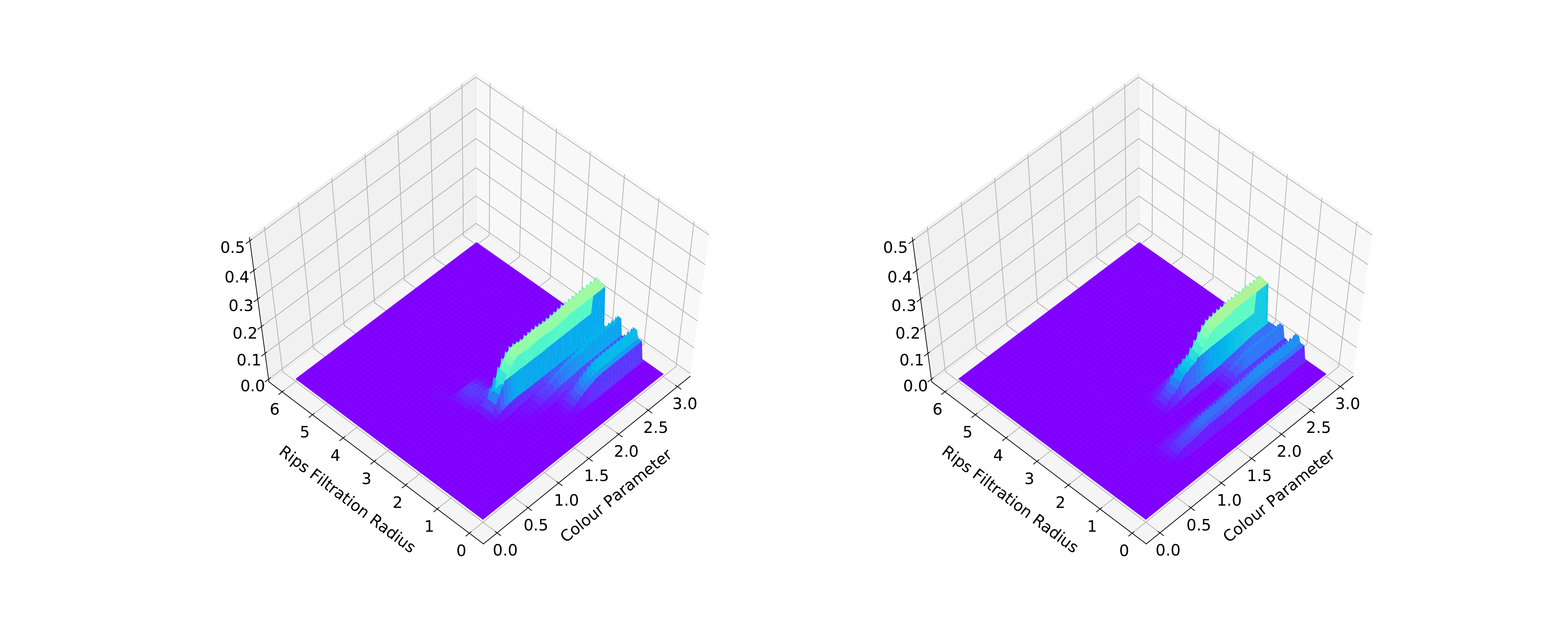}
    \caption{The mean second landscape $\overline{\lambda}_2(1,\vec{x})$ taken over the $30$ noisy samples.}
    \label{subfig:NoisySecondMeanLandscape}
  \end{subfigure}
  \caption{The first column shows the plots for Colouring A and the second column Colouring B.}
  \label{fig:NoisyColouredCircles}
\end{figure}

\subsection{Modal Estimation}

For this example we work on meteorite data which we have lifted from \cite{Good_1980}. The data set consists of values of the proportion of silica measured in $22$ samples. Our task is to infer how many modes there are in the distribution from which this data has been sampled.

A standard approach to this task is kernel density estimation (KDE). With data $\{x_i\} \subset \mathbb{R}^n$ one estimates the probability density function (pdf) of the distribution using a sum of normalised kernels: $$f(x) = \frac{1}{n}\sum_{i=1}^n K_\sigma(x-x_i)$$

Here $K_\sigma$ is a density function with mass concentrated about the origin, for example a Gaussian centred at the origin.
There are two natural parameters in this KDE setup. The \textit{bandwidth} parameter $\sigma$ of the kernel function $K_\sigma$, and a \textit{threshold} parameter which dictates how large a peak in the estimated distribution must be to be considered a mode. The choice of these parameters will dramatically alter our inferred number of modes (see Figure \ref{fig:ChondriteKDEs}).

\begin{figure}[h]
  \centering
    \includegraphics[width=\linewidth]{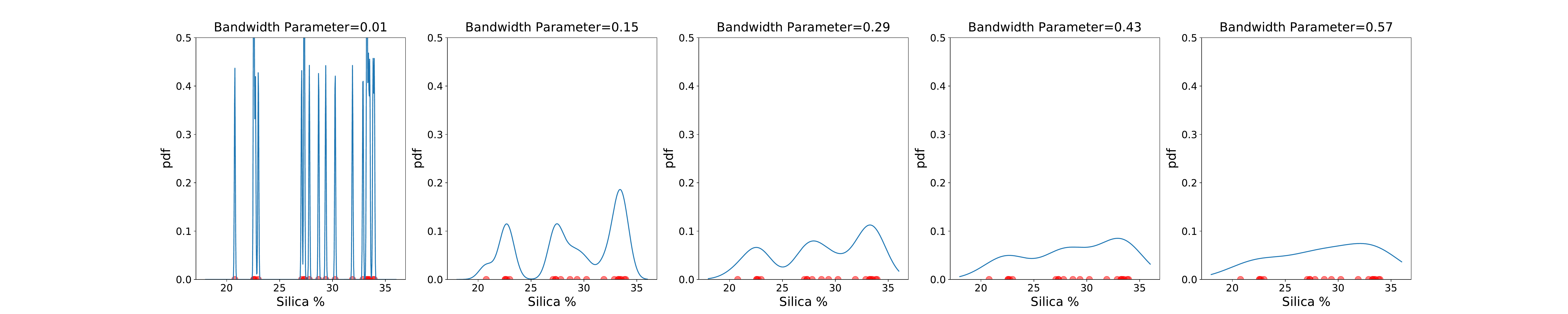}
  \caption{We plot kernel density estimates on the meteorite data (red) for a range of bandwidth parameters. As we increase the bandwidth parameter we yield fewer modes in our kernel density estimate.}
\label{fig:ChondriteKDEs}
\end{figure}

\begin{figure}[h]
  \centering
    \includegraphics[width=0.5\linewidth]{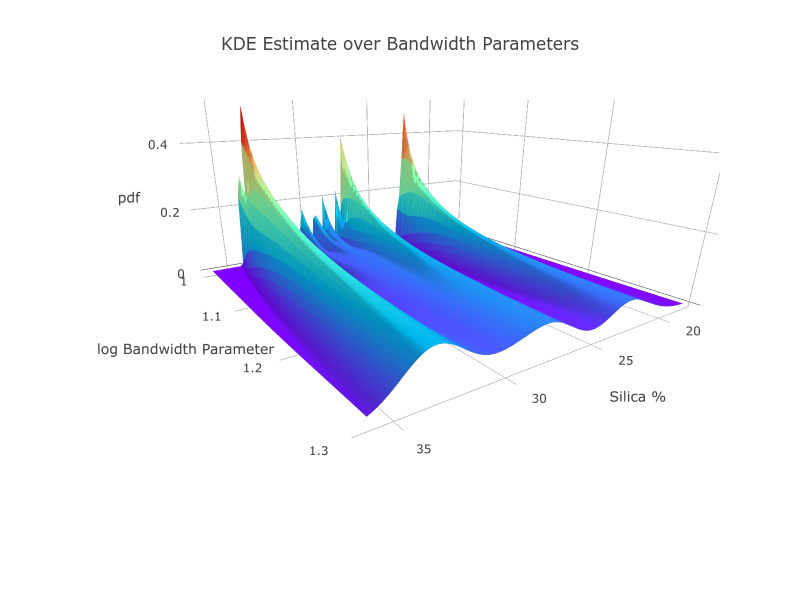}
    \caption{A triangulated surface plot of the KDE for a range of bandwidth parameters. We observe three modes in the KDE estimate for a large range of bandwidth values. }
\label{fig:KDE-Surface}
\end{figure}

Figure \ref{fig:KDE-Surface} is a surface plot of the KDEs ranging over various bandwidth parameters, demonstrating the change in the number of modes as we change the bandwidth. The surface has been triangulated using 
a triangulation subordinate to a regular grid on our parameter space. To each $2$-simplex $\tau$ in the triangulation we attach two parameters; the mean bandwidth $\sigma(\tau)$, and the mean probability density value $p(\tau)$, (averages taken over the vertices of the simplex). We produce a bifiltration by taking the simplicial closure of the $2$-simplices with appropriate parameter values, $X_{(\sigma_0,p_0)} = \text{SC}(\{ \tau | \sigma(\tau) \leq \sigma_0 , p(\tau) \geq 1-p_0 \})$.

The multiparameter landscape detects that three modes appear in the KDEs for a range of parameter values. Looking at the landscapes associated to the $H_0$-module we see that the infinity norm of the first three landscapes is constant but decreases significantly between the third and fourth landscapes, Figure \ref{fig:Modal-Landscapes}. This indicates that within this setup, three modes are seen across a significantly wider range of parameter values than four modes, suggesting the data is drawn from a tri-modal distribution which coincides with our expected result. Whilst in this simple example one could suggest there are three modes from inspection, the landscape analysis can equally be applied to higher dimensional data sets for which visualisation is not possible.

\begin{figure}[h]
  \centering
    \includegraphics[width=\linewidth]{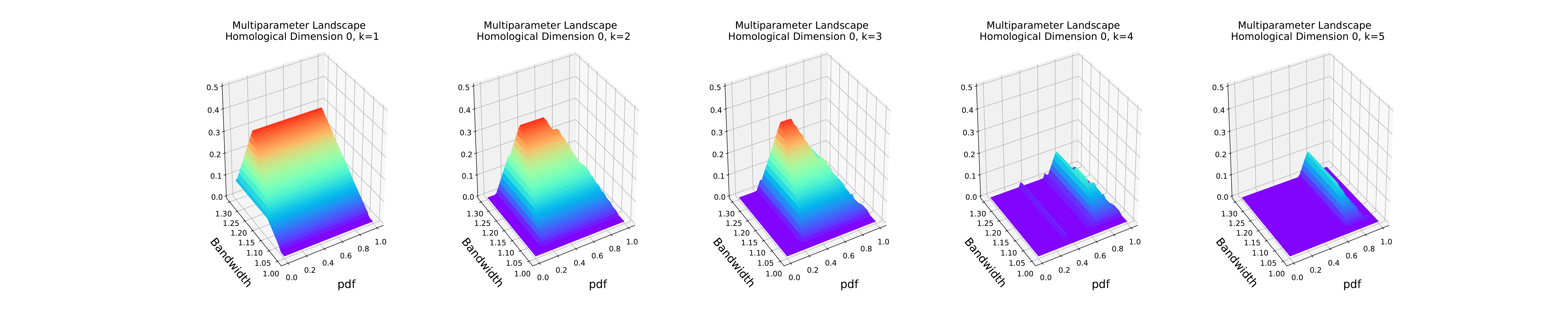}
  \caption{The first to fifth landscapes associated to the $H_0$ module for the KDE surface}
  \label{fig:Modal-Landscapes}
\end{figure}

This basic example can be generalised to detect other properties of KDEs robust to changes in parameter values. For example one could detect significant $i$-dimensional holes in the distribution by considering the $H_i$ module in a similar setup. For related work see Persistence Terraces \cite{Moon2017}.

\subsection{Curvature}

In this subsection we shall work with a synthetic data set sampled from spaces of different curvature. This example is used to emphasise the ability of the multiparameter landscapes to detect geometric differences between point samples. The samples consist of $100$ points chosen uniformly with respect to the volume measure from discs of radius $1$ in the hyperbolic plane, the surface of the unit sphere and Euclidean space so that the spaces have constant curvature of $-1, 1, 0$ respectively. Topologically these disks are all trivial, our landscapes are detecting geometric differences induced by the distribution of points.

We would like to show that the multiparameter landscape is able to detect the curvature of the space from which a sample is drawn given only the pairwise distances between points.


A multifiltered complex is built on the sampled points by filtering the Rips complex with the third nearest neighbour density function $\rho$ on the points. Explicitly, if $\mathcal{P}$ denotes our sampled points and $(r,\rho_0)\in \mathbb{R}^2$ then $X_{(r,\rho_0)} = \text{VR}(\mathcal{P}_{\rho_0},r)$ where $\mathcal{P}_{\rho_0} = \{p\in \mathcal{P} : \rho(p)\leq \rho_0\}$ for the third nearest neighbour density function $\rho$.  We take $100$ samples of $100$ points in each space and investigate the resulting multiparameter landscapes for dimension 1 homology.

We plot the average first multiparameter landscapes in Figure \ref{subfig:CurvatureMeanLandscapes} and the differences between the average landscapes in Figure \ref{subfig:CurvatureLandscapeDifferences}. As one might expect, the persistence of cycles is affected by the curvature of the space. The more negative the curvature the longer the one dimensional cycles persist.

Let us now apply a simple machine learning algorithm to the multiparameter landscapes to see if we can reliably distinguish the curvature of the space from which our small samples have been drawn.

Using the Python package LinearSVC, we train a Support Vector Machine (SVM) with linear kernel on discretizations of the first $10$ landscapes for the samples of the hyperbolic discs and elliptic discs, using $l^2$ penalty and squared hinge loss function. We randomly partition our samples into $160$ training samples and $40$ test samples and evaluate the accuracy by the proportion of test samples correctly classified. Repeating this process $100$ times we attain an average classification score of $85.78\%$. Thus we see that the multiparameter landscapes are able to reliably detect curvature given a relatively small local sample. It is possible that alternative choices of filtration parameters may be better suited to detecting curvature.

\begin{figure}[ht]
  \centering
  \begin{subfigure}[b]{1\linewidth}
    \includegraphics[width=\linewidth]{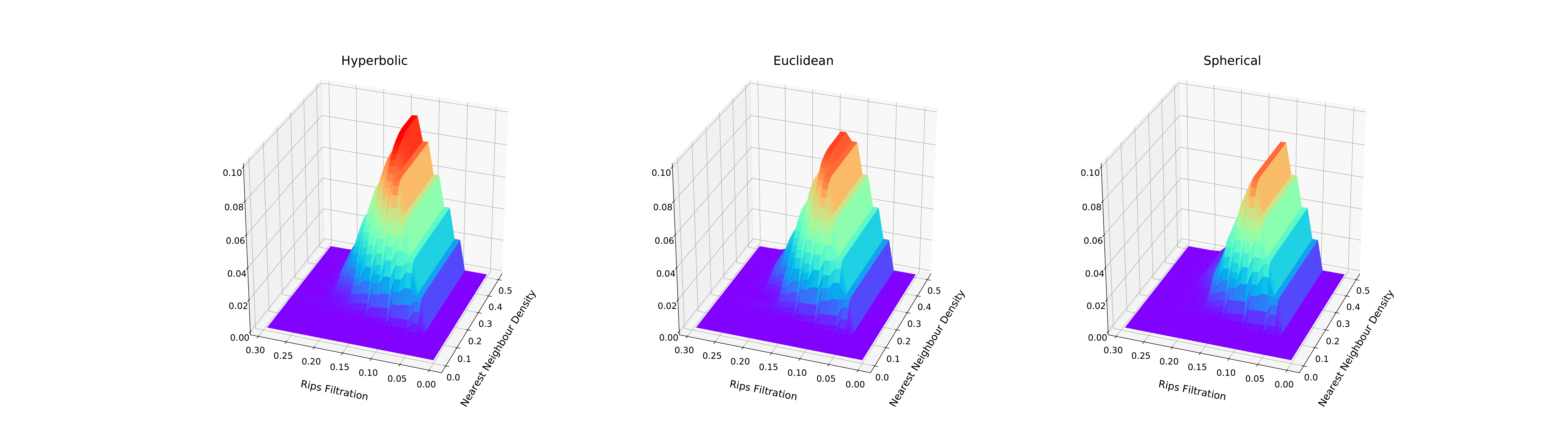}
    \caption{The mean first landscape $\bar{\lambda}_{2}(1,\vec{x})$ of the $H_1$ module for the hyperbolic, Euclidean and elliptic discs taken over $100$ samples.}
    \label{subfig:CurvatureMeanLandscapes}
  \end{subfigure}
  \begin{subfigure}[b]{1\linewidth}
    \includegraphics[width=\linewidth]{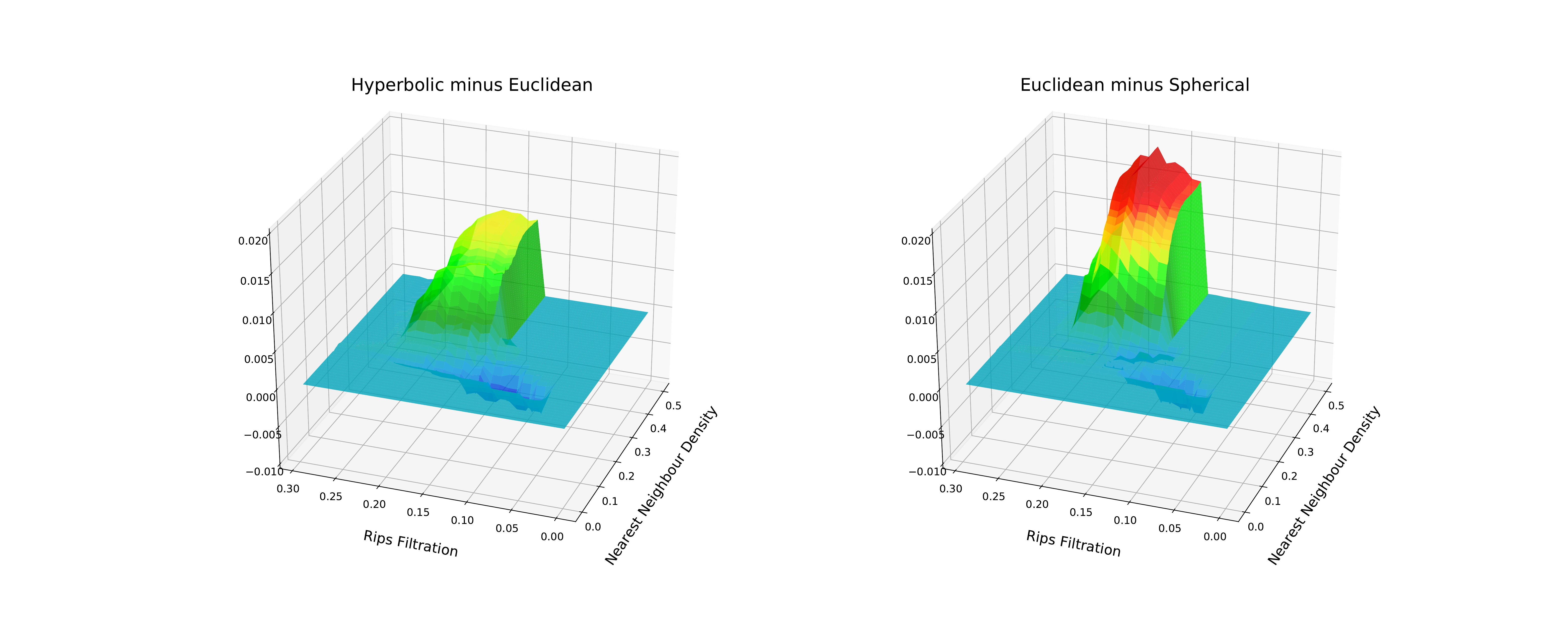}
    \caption{The pointwise difference between the mean landscapes $\bar{\lambda}_{2}(1,\vec{x})$}
    \label{subfig:CurvatureLandscapeDifferences}
  \end{subfigure}
  \caption{}
  \label{fig:Curvature-Average-Landscapes}
\end{figure}

\section{Conclusion}

Multiparameter persistence landscapes provide a stable representation of the rank invariant of a persistence module whilst retaining the discriminating power of the rank invariant. Moreover the landscape distance provides a computable lower bound for the optimal stable distance on persistence modules, the interleaving distance.

The multiparameter landscape also offers a bridge from topological data analysis to machine learning and statistical analysis of multiparameter modules. The multiparameter landscapes, although hard to visualize in dimensions higher than $2$, are interpretable in any dimension with large landscape values indicating features robust to changes in the filtration parameters, and non-zero landscapes for large $k$ indicating a large number of homological features.

The multiparameter landscapes highlight several open questions and challenges in the development of the theory and applications of multiparameter persistent homology that we would be interested to see addressed:
\begin{enumerate}
    \item We would like to understand the relationship between the interleaving distance and landscape distance associated to modules to understand when the landscape distance provides a good lower bound estimate.
    \item We have restricted our invariant to the discriminating power of the rank invariant. We would be interested to see if we could combine our landscapes with invariants that capture the more subtle relationships between features born at incomparable parameter values.
    \item The Bootstrap Method has been used to compute confidence bands for  single parameter persistence landscapes \cite{Chazal2014StochasticCO}. We would be interested in applying similar analysis for multiparameter landscapes.
\end{enumerate}

Finally it is worth remarking that the construction of multiparameter persistence landscapes from multiparameter persistence modules can be generalised to produce stable invariants of generalised persistence modules indexed over other posets. Providing the indexing poset $P$ is equipped with a superlinear family of translations $\Omega$, one can derive a landscape function $\lambda: \mathbb{N}\times P \to \overline{\mathbb{R}}$ from the rank function $\rk : P\times P \to \mathbb{N}$. This landscape equipped with the supremum norm is stable with respect to the interleaving distance induced by the superlinear family, and provides an interpretable, stable representation of the rank function. This vectorization may prove a useful invariant should the computation of generalised persistence modules be developed in future work.

\bibliography{Multiparameter_Persistence_Landscapes}

\newcommand{\etalchar}[1]{$^{#1}$}
\begin{thebibliography}{BMM{\etalchar{+}}16}

\bibitem[ACC13]{Adcock2013}
Aaron Adcock, Erik Carlsson, and Gunnar Carlsson.
\newblock {The Ring of Algebraic Functions on Persistence Bar Codes}.
\newblock 18(1):381--402, 2013.

\bibitem[AEK{\etalchar{+}}17]{Adams2015}
Henry Adams, Tegan Emerson, Michael Kirby, Rachel Neville, Chris Peterson,
  Patrick Shipman, Sofya Chepushtanova, Eric Hanson, Francis Motta, and Lori
  Ziegelmeier.
\newblock Persistence images: A stable vector representation of persistent
  homology.
\newblock {\em Journal of Machine Learning Research}, 18(8):1--35, 2017.

\bibitem[{Bak}16]{Bjerkevik2016}
H.~{Bakke Bjerkevik}.
\newblock {Stability of higher-dimensional interval decomposable persistence
  modules}.
\newblock {\em ArXiv e-prints}, September 2016.

\bibitem[BB17]{Botnan17}
H{\aa}vard~Bakke Bjerkevik and Magnus~Bakke Botnan.
\newblock Computational complexity of the interleaving distance.
\newblock {\em CoRR}, abs/1712.04281, 2017.

\bibitem[BD17]{BubenikDlotko2017}
Peter Bubenik and Paweł Dłotko.
\newblock A persistence landscapes toolbox for topological statistics.
\newblock {\em Journal of Symbolic Computation}, 78:91 -- 114, 2017.
\newblock Algorithms and Software for Computational Topology.

\bibitem[BdSS15]{Bubenik2015}
Peter Bubenik, Vin de~Silva, and Jonathan Scott.
\newblock {Metrics for Generalized Persistence Modules}.
\newblock {\em Foundations of Computational Mathematics}, 15(6):1501--1531,
  2015.

\bibitem[BL14]{bauer_induced_2014}
Ulrich Bauer and Michael Lesnick.
\newblock Induced {Matchings} of {Barcodes} and the {Algebraic} {Stability} of
  {Persistence}.
\newblock In {\em Proceedings of the {Thirtieth} {Annual} {Symposium} on
  {Computational} {Geometry}}, {SOCG}'14, pages 355:355--355:364, New York, NY,
  USA, 2014. ACM.

\bibitem[BMM{\etalchar{+}}16]{bendich_persistent_2016}
Paul Bendich, J.~S. Marron, Ezra Miller, Alex Pieloch, and Sean Skwerer.
\newblock Persistent homology analysis of brain artery trees.
\newblock {\em Annals of Applied Statistics}, 10(1):198--218, March 2016.

\bibitem[Bub15]{Bubenik:2015}
Peter Bubenik.
\newblock Statistical topological data analysis using persistence landscapes.
\newblock {\em J. Mach. Learn. Res.}, 16(1):77--102, January 2015.

\bibitem[CB15]{Crawley-Boevey15}
William Crawley-Boevey.
\newblock Decomposition of pointwise finite-dimensional persistence modules.
\newblock {\em Journal of Algebra and Its Applications}, 14(05):1550066, 2015.

\bibitem[CFF{\etalchar{+}}]{cerri_betti_2013}
Andrea Cerri, Barbara~Di Fabio, Massimo Ferri, Patrizio Frosini, and Claudia
  Landi.
\newblock Betti numbers in multidimensional persistent homology are stable
  functions.
\newblock {\em Mathematical Methods in the Applied Sciences},
  36(12):1543--1557.

\bibitem[CFL{\etalchar{+}}13]{Chazal2013Bootstrap}
Fr{\'e}d{\'e}ric Chazal, Brittany Fasy, Fabrizio Lecci, Alessandro Rinaldo,
  Aarti Singh, and Larry Wasserman.
\newblock On the bootstrap for persistence diagrams and landscapes.
\newblock 20, 11 2013.

\bibitem[CFL{\etalchar{+}}14]{Chazal2014StochasticCO}
Fr{\'e}d{\'e}ric Chazal, Brittany~Terese Fasy, Fabrizio Lecci, Alessandro
  Rinaldo, and Larry~A. Wasserman.
\newblock Stochastic convergence of persistence landscapes and silhouettes.
\newblock {\em JoCG}, 6:140--161, 2014.

\bibitem[CSEM06]{Cohen-Steiner2006}
David Cohen-Steiner, Herbert Edelsbrunner, and Dmitriy Morozov.
\newblock Vines and vineyards by updating persistence in linear time.
\newblock In {\em Proceedings of the Twenty-second Annual Symposium on
  Computational Geometry}, SCG '06, pages 119--126, New York, NY, USA, 2006.
  ACM.

\bibitem[CZ09]{Carlsson2009}
Gunnar Carlsson and Afra Zomorodian.
\newblock {The theory of multidimensional persistence}.
\newblock {\em Discrete and Computational Geometry}, 42(1):71--93, 2009.

\bibitem[EH10]{books/daglib/0025666}
Herbert Edelsbrunner and John Harer.
\newblock {\em Computational Topology - an Introduction.}
\newblock American Mathematical Society, 2010.

\bibitem[FL11]{frosini_persistent_2011}
Patrizio Frosini and Claudia Landi.
\newblock Persistent {Betti} {Numbers} for a {Noise} {Tolerant} {Shape}-{Based}
  {Approach} to {Image} {Retrieval}.
\newblock In {\em Computer {Analysis} of {Images} and {Patterns}}, Lecture
  {Notes} in {Computer} {Science}, pages 294--301. Springer, Berlin,
  Heidelberg, August 2011.

\bibitem[Gei81]{Geitz1981}
Robert~F. Geitz.
\newblock {Pettis Integration}.
\newblock {\em Proceedings of the American Mathematical Society}, 82(1):81,
  1981.

\bibitem[GG80]{Good_1980}
I.~J. Good and R.~A. Gaskins.
\newblock Density estimation and bump-hunting by the penalized likelihood
  method exemplified by scattering and meteorite data.
\newblock {\em Journal of the American Statistical Association},
  75(369):42--56, 1980.

\bibitem[GHI{\etalchar{+}}15]{gameiro_topological_2015}
Marcio Gameiro, Yasuaki Hiraoka, Shunsuke Izumi, Miroslav Kramar, Konstantin
  Mischaikow, and Vidit Nanda.
\newblock A topological measurement of protein compressibility.
\newblock {\em Japan Journal of Industrial and Applied Mathematics},
  32(1):1--17, March 2015.

\bibitem[HJP76]{Hoffmann-Jørgensen1976}
J~Hoffmann-J{\o}rgensen and G~Pisier.
\newblock {The Law of Large Numbers and the Cenral Limit Theorem in Banach
  Spaces}.
\newblock {\em The Annals of Probability}, 4(4):587--599, 1976.

\bibitem[HOST17]{2017Otter}
H.~A. {Harrington}, N.~{Otter}, H.~{Schenck}, and U.~{Tillmann}.
\newblock {Stratifying multiparameter persistent homology}.
\newblock {\em ArXiv e-prints}, August 2017.

\bibitem[KGW15]{kelin_multidimensional_2015}
Xia Kelin and Wei Guo-Wei.
\newblock Multidimensional persistence in biomolecular data.
\newblock {\em Journal of Computational Chemistry}, 36(20):1502--1520, July
  2015.

\bibitem[KLW18]{Keller2018}
Bryn Keller, Michael Lesnick, and Theodore~L. Willke.
\newblock {PHoS: Persistent Homology for Virtual Screening}.
\newblock 8 2018.

\bibitem[{Lan}14]{Landi2014}
C.~{Landi}.
\newblock {The rank invariant stability via interleavings}.
\newblock {\em ArXiv e-prints}, December 2014.

\bibitem[Les12]{Lesnick2012}
Michael~Phillip Lesnick.
\newblock {Multidimensional interleavings and applications to topological
  inference}.
\newblock {\em Standford University}, (December), 2012.

\bibitem[LT11]{Talagrand2011}
Michel Ledoux and Michel Talagrand.
\newblock {\em {Probability in Banach Spaces, Isoperimetry and Processes}}.
\newblock 2011.

\bibitem[LW15]{Lesnick2015}
Michael Lesnick and Matthew Wright.
\newblock {Interactive Visualization of 2-D Persistence Modules}.
\newblock {\em Preprint ArXiv}, pages 1--75, 2015.

\bibitem[MGL17]{Moon2017}
C.~{Moon}, N.~{Giansiracusa}, and N.~A. {Lazar}.
\newblock {Persistence Terrace for Topological Inference of Point Cloud Data}.
\newblock {\em ArXiv e-prints}, May 2017.

\bibitem[{Mil}17]{Miller2017}
E.~{Miller}.
\newblock {Data structures for real multiparameter persistence modules}.
\newblock {\em ArXiv e-prints}, September 2017.

\bibitem[Mus15]{Musia2015}
Kazimierz Musia.
\newblock {Topics in the theory of Pettis integration}.
\newblock (July), 2015.

\bibitem[NLC11]{nicolau_topology_2011}
Monica Nicolau, Arnold~J. Levine, and Gunnar Carlsson.
\newblock Topology based data analysis identifies a subgroup of breast cancers
  with a unique mutational profile and excellent survival.
\newblock {\em Proceedings of the National Academy of Sciences of the United
  States of America}, 108(17):7265--7270, April 2011.

\bibitem[{Ogl}18]{Ogle2018}
C.~{Ogle}.
\newblock {On the structure of geometrically injective modules indexed by
  partially ordered sets}.
\newblock {\em ArXiv e-prints}, March 2018.

\bibitem[SC17]{Skryzalin2017}
Jacek Skryzalin and Gunnar Carlsson.
\newblock {Numeric invariants from multidimensional persistence}.
\newblock {\em Journal of Applied and Computational Topology}, 1(1):89--119,
  2017.

\end{thebibliography}
\bibliographystyle{alpha}

\end{document}